\documentclass[reqno,12pt]{article}
\usepackage[a4paper, margin=20mm]{geometry}

\usepackage{amsmath,stmaryrd, mathtools}
\usepackage{amssymb}
\usepackage{amsthm}
\usepackage[utf8]{inputenc}
\usepackage{graphicx}
\usepackage{xcolor}
\usepackage[normalem]{ulem}
\usepackage[french]{babel}

\usepackage{bbm} 
\usepackage{mathrsfs} 
\usepackage{stackengine} 
\usepackage{array} 
\usepackage{bm} 

\usepackage{enumitem} 
\usepackage{hyperref} 
\newcommand{\sref}[2]{\hyperref[#2]{#1~\ref{#2}}}

\def\geq{\ensuremath\geqslant}
\def\leq{\ensuremath\leqslant}
\def\N{\ensuremath\mathbb{N}}
\def\Z{\ensuremath\mathbb{Z}}
\def\Q{\ensuremath\mathbb{Q}}
\def\R{\ensuremath\mathbb{R}}
\def\C{\ensuremath\mathbb{C}}
\def\K{\ensuremath\mathbb{K}}

\def\Li{\ensuremath\mathrm{Li}}
\def\Pbar{\ensuremath\overline{P}}
\def\Qbar{\ensuremath\overline{Q}}

\def\varthetabar{\ensuremath\overline{\vartheta}}
\def\uh{\ensuremath\underline{h}}
\newcommand\house[1]{%
  \begingroup\setlength\arraycolsep{0pt}
  \begin{array}[t]{@{}c@{}|c|@{}c@{}}
  \firsthline
  &\;#1\;{}&
  \end{array}
  \endgroup
}

\newtheorem{theorem}{Théorème}[subsection]

\newtheorem{lemma}[theorem]{Lemme}
\newtheorem{corollary}[theorem]{Corollaire}
\newtheorem{proposition}[theorem]{Proposition}

\newtheorem{remark}{Remarque}[subsection]

\makeatletter
\newcommand\footnoteref[1]{\protected@xdef\@thefnmark{\ref{#1}}\@footnotemark}
\makeatother

\setcounter{MaxMatrixCols}{20}

\usepackage{csquotes}

\numberwithin{equation}{section}

\title{Indépendance linéaire de valeurs de fonctions $L$ de Dirichlet}
\author{Ludovic Mistiaen}
\date{Novembre 2025}

\begin{document}
\maketitle

\begin{abstract}
Dans cet article, étant donné un caractère de Dirichlet modulo un entier $N$ non divisible par $4$, nous donnons une minoration de l'ordre de $\sqrt{{s}/{\log(s)}}$ de la dimension du $\Q(e^{2i\pi/N})$-espace vectoriel engendré par les valeurs de sa fonction $L$ aux entiers $\leq s$ d'une parité donnée. Nous généralisons ainsi un résultat de Fischler de 2021 qui correspond au caractère trivial. Pour cela, nous construisons à l'aide du lemme de Siegel des combinaisons linéaires de ces valeurs de fonction $L$ et nous leur appliquons un critère d'indépendance linéaire généralisant celui utilisé par Fischler. Pour vérifier les hypothèses de ce critère, nous nous appuyons sur un ``lemme de Shidlovskii''.
\end{abstract}

\section{Introduction}

En 2001, Ball et Rivoal \cite{rivoal2000, ballrivoal2001} démontrent qu'une infinité de valeurs de la fonction zêta de Riemann aux entiers impairs sont irrationnelles, en donnant une minoration de la dimension du $\Q$-espace vectoriel qu'elles engendrent.
\begin{theorem}
\label{thmballrivoal}
    (Ball-Rivoal, 2001).
    Pour $s\geq 3$ un entier impair , on a
    \begin{equation*}
        \mathrm{dim}_\Q \mathrm{Vect}_\Q \Big(1, \zeta(3), \zeta(5), ..., \zeta(s)\Big) \stackunder{$\geq$}{$\scriptscriptstyle{s\to+\infty}$}  \hspace{1mm}\frac{1+o(1)}{1+\log(2)}\log(s).
    \end{equation*}
\end{theorem}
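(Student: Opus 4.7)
\medskip
\noindent\emph{Esquisse d'une démonstration envisageable.}

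Mon plan est de suivre la stratégie originelle de Ball et Rivoal. L'idée directrice consiste à construire, pour chaque entier $n$ grand, une forme linéaire explicite
\[
\ell_n = p_{0,n} + p_{3,n}\,\zeta(3) + p_{5,n}\,\zeta(5) + \cdots + p_{s,n}\,\zeta(s)
\]
à coefficients rationnels, suffisamment petite pour que le critère d'indépendance linéaire de Nesterenko, appliqué à la famille $(1, \zeta(3), \zeta(5), \ldots, \zeta(s))$, fournisse la minoration souhaitée de la dimension.

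La construction repose sur l'introduction, pour un paramètre auxiliaire entier $r$ avec $1 \leq r \leq (s-1)/2$, de la fraction rationnelle
\[
R_n(t) = n!^{s-2r} \, \frac{\prod_{j=1}^{rn}(t-j)\prod_{j=1}^{rn}(t+n+j)}{\prod_{j=0}^{n}(t+j)^{s}} ,
\]
et sur l'étude de la série $S_n = \sum_{k=1}^{\infty} R_n'(k)$, qui converge car $R_n(t) = O(t^{-2})$ à l'infini. En décomposant $R_n$ en éléments simples, puis en reconnaissant que les séries obtenues sont des $\Li_j(1) = \zeta(j)$, on exprime $S_n$ comme combinaison $\Q$-linéaire de $1, \zeta(2), \zeta(3), \ldots, \zeta(s)$. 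Le point crucial est la symétrie $R_n(-n-t) = (-1)^{s(n+1)} R_n(t)$ : pour $s$ impair, un argument de parité provoque l'annulation des coefficients devant chaque $\zeta(2k)$, ce qui donne bien à $S_n$ la forme $\ell_n$ ci-dessus.

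Il s'agit ensuite d'établir trois estimations asymptotiques : (i) une majoration $|S_n| \leq \alpha(r,s)^{n+o(n)}$ obtenue par la méthode du col appliquée à une représentation intégrale de contour de $S_n$ ; (ii) une majoration uniforme $|p_{j,n}| \leq \beta(r,s)^{n+o(n)}$ des coefficients rationnels ; (iii) la propriété d'intégralité $d_n^{s}\,p_{j,n} \in \Z$, où $d_n = \mathrm{ppcm}(1, 2, \ldots, n)$ vérifie $d_n = e^{n(1+o(1))}$ d'après le théorème des nombres premiers. Une optimisation finale du choix de $r$ en fonction de $s$, avec $r \sim s/\log s$, combinée au critère de Nesterenko, fait apparaître la constante $\frac{1}{1+\log(2)}$ annoncée.

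L'obstacle principal me paraît être l'analyse du col pour les estimations (i) et (ii) : l'intégrande dépend simultanément des paramètres $n$, $r$ et $s$, et la détermination précise des constantes $\alpha(r,s)$ et $\beta(r,s)$, indispensable à l'optimisation finale, exige une étude asymptotique fine. En comparaison, la preuve de la symétrie et de l'annulation des coefficients pairs, bien que technique, relève d'une vérification combinatoire plus routinière une fois le cadre posé.
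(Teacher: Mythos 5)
Précisons d'abord que l'article ne démontre pas cet énoncé : c'est le théorème de Ball--Rivoal, cité comme résultat connu, et seule la stratégie de sa preuve est rappelée dans l'introduction (sommation d'une fraction rationnelle sur les entiers, symétrie du numérateur éliminant les valeurs de zêta aux entiers pairs, critère de Nesterenko). Votre esquisse suit exactement cette stratégie classique ; elle ne constitue donc pas une voie différente de celle de la référence citée, mais elle comporte trois lacunes concrètes.

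Premièrement, l'argument de parité est incomplet : avec votre $R_n$, la symétrie s'écrit $R_n(-n-t)=(-1)^{s(n+1)}R_n(t)$ et son signe dépend de la parité de $n$ ; pour $s$ impair, l'annulation des coefficients des $\zeta(2k)$ dans $S_n=\sum_k R_n'(k)$ (laquelle fait d'ailleurs intervenir a priori $\zeta(2),\dots,\zeta(s+1)$ et non seulement jusqu'à $\zeta(s)$, et demande $d_n^{s+1}$ plutôt que $d_n^{s}$ pour l'intégralité) n'a lieu que pour $n$ d'une parité fixée. Il faut donc soit se restreindre à ces $n$ et vérifier que la version du critère de Nesterenko employée l'autorise, soit insérer le facteur très bien équilibré $\bigl(t+\tfrac n2\bigr)$ au numérateur, qui est précisément le dispositif de Ball et Rivoal. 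Deuxièmement, le critère de Nesterenko exige une minoration de $|S_n|$, c'est-à-dire l'asymptotique exponentielle exacte $|S_n|=\alpha^{n+o(n)}$, et non la seule ``majoration'' annoncée en (i) : une majoration seule ne fournit aucune indépendance linéaire, et c'est pour cette borne inférieure que la méthode du col est réellement indispensable. Troisièmement, le choix $r\sim s/\log s$ paraît trop grand pour atteindre la constante $\frac{1}{1+\log 2}$ : les coefficients $c_{i,j}$ de la décomposition en éléments simples (donc les $p_{j,n}$, et aussi la taille de $S_n$) comportent un facteur de l'ordre de $r^{2rn}$, si bien qu'un terme $2r\log r$ s'ajoute au dénominateur de l'exposant final et s'en retranche au numérateur ; avec $r\log r\asymp s$ la constante se dégrade (vers $\frac{1}{3+\log 2}$ environ). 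Il faut $\log r\sim\log s$ mais $r\log r=o(s)$, par exemple $r=\lfloor s/\log^2 s\rfloor$ comme dans la démonstration originale ; votre optimisation finale doit être reprise avec ce choix pour faire apparaître la constante annoncée.
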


Pour cela ils appliquent le critère d'indépendance linéaire de Nesterenko (voir \cite[Théo\-rème II.1.3]{colmez2002}) à une suite de combinaisons linéaires explicites de $1, \zeta(3), \zeta(5), ..., \zeta(s)$. Ces combinaisons sont construites en sommant les valeurs aux entiers d'une suite de fractions rationnelles. Ce procédé, que nous détaillons dans la \autoref{partie2}, apparaît chez Nikishin \cite{nikishin79} et est lié à la notion d'approximation de Padé (à ce sujet, voir \cite[Section 3]{beukers81} pour l'exemple de $\zeta(3)$ et \cite{fischlerrivoal2003} pour une vue d'ensemble dans ce contexte).

Ball et Rivoal utilisent une fraction rationnelle dont une symétrie du numérateur assure que seules les valeurs de zêta aux entiers impairs apparaissent. Leur construction est généralisée par Zudilin dans \cite{zudilin2002}. 

Lai \cite{lai2024} a récemment amélioré la constante, de $\frac{1}{1+\log(2)} \approx 0.59$ à environ $0.66$.

\medskip
En 2019, Fischler, Sprang et Zudilin \cite{fischlersprangzudilin2019} mettent en place un procédé d'élimination qui permet de construire à partir de fractions rationnelles des combinaisons linéaires explicites de $1, \zeta(3), \zeta(5), ..., \zeta(s)$ dans lesquelles jusqu'à $2^{\frac{\log(s)}{\log\log(s)}(1+o(1))}$ des nombres $\zeta(i)$ au choix n'apparaissent pas. Cela leur permet d'obtenir une minoration asymptotique en $2^{\frac{\log(s)}{\log\log(s)}(1+o(1))}$ du nombre d'irrationnels dans l'ensemble $\big\{ \zeta(3), \zeta(5), ..., \zeta(s)\big\}$. Cette minoration est asymptotiquement meilleure que $\log(s)$, mais obtenir l'irrationalité de certains nombres est plus faible qu'obtenir leur indépendance linéaire sur $\Q$. En modifiant la fraction rationnelle utilisée, Lai et Yu \cite{laiyu2020} obtiennent le

\begin{theorem}
\label{thmlaiyu}
    (Lai-Yu, 2020).
    Pour $s\geq 3$ un entier impair, il y a au moins 
    \begin{equation*}
        \Big(1.19+o(1)\Big)\sqrt{\frac{s}{\log(s)}}
    \end{equation*}
    irrationnels dans l'ensemble $\Big\{\zeta(3), \zeta(5), ..., \zeta(s)\Big\}$.
\end{theorem}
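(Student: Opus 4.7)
La stratégie, due à Lai-Yu en raffinement de \cite{fischlersprangzudilin2019}, combine la construction de Ball-Rivoal-Zudilin avec un procédé d'élimination. Je partirais d'une fraction rationnelle $R_n$ dépendant de paramètres entiers $a, r$ (à optimiser en fonction de $s$), construite comme chez Ball-Rivoal avec assez de facteurs au dénominateur pour faire apparaître $\zeta(j)$ pour $j\leq s$, et présentant l'antisymétrie $R_n(-n-t) = -R_n(t)$. Cette antisymétrie force la somme
\[
S_n = \sum_{k\geq 1} R_n(k) = \rho_{n,0} + \!\!\sum_{\substack{3\leq j\leq s\\ j\text{ impair}}}\!\! \rho_{n,j}\,\zeta(j)
\]
à ne faire intervenir que les valeurs impaires de $\zeta$, avec $\rho_{n,j}\in\Q$. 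Les dénominateurs se contrôlent classiquement par décomposition en éléments simples, et la méthode du col appliquée à la représentation intégrale complexe de $R_n$ donne des estimations asymptotiques précises de $|S_n|$ et de $\max_j|\rho_{n,j}|$.

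J'introduirais ensuite un paramètre libre supplémentaire -- par exemple un décalage entier $m$ du numérateur, ou une dérivation par rapport à une variable auxiliaire -- afin de disposer de toute une famille $\{S_{n,m}\}$ de formes linéaires partageant les mêmes propriétés arithmétiques et analytiques. J'appliquerais à cette famille le procédé d'élimination de Fischler-Sprang-Zudilin : en combinant $\Z$-linéairement un nombre suffisant de $S_{n,m}$, on peut annuler à volonté $K$ des coefficients $\rho_{n,j}$, au prix d'une aggravation contrôlée des dénominateurs et de la taille archimédienne, et pourvu que la cardinalité de la famille dépasse $K$, il reste au moins une combinaison non triviale.

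Pour conclure, je raisonnerais par l'absurde. Notons $\tau$ le nombre d'irrationnels dans $\{\zeta(3),\ldots,\zeta(s)\}$ : alors $\mathrm{Vect}_\Q(1,\zeta(3),\ldots,\zeta(s))$ est engendré par $1$ et au plus $\tau$ des $\zeta(j)$. En choisissant $K$ égal au nombre de $\zeta(j)$ rationnels, j'éliminerais ces dernières pour obtenir des formes $\widetilde S_n$ ne portant que sur $1$ et au plus $\tau$ irrationnels ; le critère d'indépendance linéaire de Nesterenko, appliqué à une suite bien choisie de telles formes, donnerait $\tau \geq (1{,}19+o(1))\sqrt{s/\log(s)}$, contredisant l'hypothèse. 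L'obstacle principal est double : trouver la bonne variante de $R_n$ pour que l'étape d'élimination ne détruise pas les bonnes estimations analytiques, puis optimiser conjointement les paramètres $a, r$ et $K$ -- un problème variationnel sur la fonction apparaissant dans le calcul du col -- de façon à faire apparaître la constante numérique $1{,}19$, améliorant ainsi le résultat antérieur de Fischler-Sprang-Zudilin.
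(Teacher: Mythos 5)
Ce théorème n'est pas démontré dans l'article : il y est seulement cité comme résultat de Lai et Yu \cite{laiyu2020}, si bien qu'il n'existe aucune preuve interne à laquelle confronter votre proposition ; je la juge donc sur le fond.

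Votre plan restitue correctement l'ossature générale (fraction rationnelle à la Ball--Rivoal avec antisymétrie pour ne faire apparaître que les $\zeta$ impairs, contrôle des dénominateurs, méthode du col, procédé d'élimination hérité de \cite{fischlersprangzudilin2019}), mais la conclusion comporte un vrai défaut. Dans l'argument de Fischler--Sprang--Zudilin repris par Lai--Yu, ce sont les valeurs supposées \emph{irrationnelles} que l'on élimine : si leur nombre $\tau$ ne dépasse pas le nombre $M$ de valeurs simultanément éliminables, la combinaison restante est un nombre rationnel non nul de dénominateur contrôlé qui tend vers $0$ trop vite, et la contradiction vient de l'inégalité triviale selon laquelle un entier non nul est de valeur absolue au moins $1$ ; on en déduit $\tau > M$ avec $M \sim 1.19\sqrt{s/\log(s)}$. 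Vous proposez au contraire d'éliminer les valeurs \emph{rationnelles} puis d'appliquer le critère de Nesterenko aux formes restantes : d'une part le nombre de valeurs à éliminer serait alors de l'ordre de $s$, bien au-delà de ce que la famille de formes autorise, et l'élimination détruit la minoration de $|\widetilde S_n|$ qu'exige précisément le critère de Nesterenko ; d'autre part un critère de ce type ne fournirait de toute façon qu'une borne logarithmique en $s$ (comme chez Ball--Rivoal), la croissance en $\sqrt{s/\log(s)}$ provenant, dans toutes ces preuves, du comptage des paramètres disponibles pour l'élimination et non d'un critère d'indépendance linéaire. Enfin, la construction de la famille de formes elle-même reste entièrement ouverte dans votre texte : un simple décalage du numérateur ou une dérivation ne donne pas, sans argument, environ $\sqrt{s/\log(s)}$ formes dont les coefficients aient la structure (de type Vandermonde) rendant l'élimination compatible avec les estimations arithmétiques et analytiques -- c'est exactement la modification de la fraction rationnelle opérée par Lai et Yu, et c'est là que se joue la constante $1.19$. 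En l'état, il s'agit d'un plan plausible mais pas d'une démonstration.
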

Lai \cite{lai2025} a récemment amélioré la constante, de $1.19$ à environ $1.28$.

\medskip 
En 2011, Nishimoto \cite{nishimoto2011} généralise le \autoref{thmballrivoal} aux nombres $L(f, i) := \sum_{m\geq 1} \frac{f(m)}{m^i}$, où une fonction $f : \N \to \C$ de période $N\geq 1$ est fixée. Le cas $f = \mathbbm{1}$ constante égale à $1$ correspond à $L(f, i) = \zeta(i)$. Il construit à partir de fractions rationnelles une suite de combinaisons linéaires explicites de $1$ et des $L(f, i)$ avec $i$ dans l'intervalle d'entiers $\llbracket 2, s\rrbracket$ et d'une parité fixée (voir la \sref{remarque}{remarquesurlaparité} ci-dessous). Il applique le critère d'indépendance linéaire de Nesterenko à cette suite de combinaisons linéaires, en utilisant la méthode du col pour obtenir le comportement asymptotique de leur taille, et obtient le
\begin{theorem}
\label{thmnishimoto2011}
    (Nishimoto, 2011).
    Soit $N\geq 1$ un entier. Soit $f:\N\to\C$ une fonction $N$-périodique.
    Soit $\varepsilon\in\{0, 1\}$. Pour $s\geq 2$ un entier de parité $\varepsilon$, on a
    \begin{equation*}
        \mathrm{dim}_\Q \mathrm{Vect}_\Q \Big\{ \;L(f, i) \;|\; 2\leq i\leq s, \;i\equiv \varepsilon [2] \; \Big\} \stackunder{$\geq$}{$\scriptscriptstyle{s\to+\infty}$} \hspace{1mm} \frac{1+o(1)}{N+\log(2)}\log(s).
    \end{equation*}
\end{theorem}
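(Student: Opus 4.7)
\medskip
\noindent\textbf{Esquisse de la stratégie.}

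Le plan que je propose suit la stratégie de Ball-Rivoal, adaptée à la présence de la fonction périodique $f$. Pour $n\geq 1$ et un entier $r\geq 1$ à optimiser en fonction de $s$, on introduit une fraction rationnelle de la forme
\[
R_n(t) = (n!)^{s-2r}\,\frac{\prod_{k=1}^{rn}(t-kN)\,\prod_{k=1}^{rn}(t+(n+k)N)}{\prod_{k=0}^{n}(t+kN)^{s}},
\]
dont les pôles sont les $-kN$, $0\leq k\leq n$, chacun d'ordre $s$. Moyennant un ajustement éventuel des exposants et un choix adéquat de la parité de $n$, une symétrie de la forme $R_n(-t-nN) = \pm R_n(t)$ annule la moitié des résidus, ne laissant subsister dans la décomposition en éléments simples que les coefficients correspondant à la parité $\varepsilon$ souhaitée. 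On considère alors la série
\[
S_n \;=\; \sum_{m\geq 1} f(m)\,R_n(m),
\]
qui converge dès que $s$ est suffisamment grand devant $r$.

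La décomposition de $R_n$ en éléments simples, combinée à la $N$-périodicité de $f$ et au réindexage $m\mapsto m+kN$, fournit une expression de la forme
\[
S_n \;=\; \rho_{0,n} \;+\; \sum_{\substack{2\leq i\leq s \\ i\equiv \varepsilon\,[2]}} \rho_{i,n}\,L(f,i),
\]
où les $\rho_{i,n}$ sont des nombres rationnels explicites. Trois estimations asymptotiques restent alors à produire : majorer les $|\rho_{i,n}|$ par $\alpha^{n(1+o(1))}$ via des intégrales de Cauchy autour des pôles ; exhiber un dénominateur commun $D_n$ aux $\rho_{i,n}$ essentiellement de la forme $N^{O(n)}\,\mathrm{ppcm}(1,\ldots,nN)^{s-1}$, dont la croissance $D_n^{1/n}\to e^{Ns}$ s'obtient par le théorème des nombres premiers ; et établir $|S_n|^{1/n}\to\beta$. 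C'est le comportement de $D_n$ qui est responsable du facteur $N$ dans la constante $N+\log(2)$ de l'énoncé.

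L'étape la plus délicate est précisément l'estimation asymptotique de $S_n$, car la fonction $f$ n'étant pas holomorphe, la méthode du col ne s'applique pas directement à la série. On contourne cette difficulté en scindant la somme selon les $N$ classes de $m$ modulo $N$, puis en représentant chaque sous-somme par une intégrale complexe de type Mellin-Barnes, à laquelle la méthode du col devient applicable. Une fois ces trois estimations obtenues, le critère d'indépendance linéaire de Nesterenko appliqué aux formes linéaires entières $D_n\,S_n$ fournit la minoration de la dimension souhaitée. L'optimisation du paramètre $r$ en fonction de $s$, menée comme dans Ball-Rivoal, conduit à $r\sim s/\log(s)$ et à la constante asymptotique $\tfrac{1}{N+\log(2)}$, le $\log(2)$ provenant des simplifications combinatoires relatives au numérateur de $R_n$.
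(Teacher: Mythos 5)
Ce théorème n'est pas démontré dans l'article : c'est un résultat cité (Nishimoto, 2011 ; retrouvé par Fischler en 2018), dont la méthode n'est résumée qu'en une phrase dans l'introduction — combinaisons linéaires explicites issues de fractions rationnelles à pôles aux multiples de $N$, critère de Nesterenko, méthode du col pour l'asymptotique des formes linéaires. Votre esquisse reconstitue fidèlement cette stratégie (symétrie à la Ball--Rivoal pour sélectionner la parité, pôles en $-kN$ pour que la $N$-périodicité de $f$ recombine les sommes décalées en $L(f,i)$, dénominateurs en $\mathrm{ppcm}(1,\dots,nN)^{s}$ responsables du $N$ dans la constante) ; il n'y a donc pas de preuve du papier à laquelle la comparer, et je ne peux juger que l'esquisse elle-même.

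Or, en tant que preuve, elle présente deux lacunes concrètes. D'abord, une inexactitude : après le réindexage $m\mapsto m+kN$, les queues de séries produisent des sommes $\sum_{m\leq kN} f(m)/m^{i}$, de sorte que le terme $\rho_{0,n}$ n'est pas rationnel mais est une combinaison $\Q$-linéaire de $f(1),\dots,f(N)$ ; il faut donc appliquer le critère de Nesterenko (dans une version adaptée aux nombres complexes) à la famille élargie $f(1),\dots,f(N),\,L(f,i)$, ce qui est asymptotiquement inoffensif mais doit être dit et justifié. Ensuite et surtout, le c\oe ur analytique manque : le critère de Nesterenko exige une minoration $|S_n|\geq \beta^{-n(1+o(1))}$, pas seulement une majoration. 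Votre découpage selon les classes modulo $N$ (ou, de façon équivalente, la décomposition de $f$ en caractères additifs $e^{2i\pi\ell m/N}$) ramène $S_n$ à une somme de $N$ intégrales traitables par la méthode du col, mais rien dans l'esquisse n'exclut une compensation entre ces $N$ contributions ; établir cette non-annulation (ou montrer qu'une contribution domine strictement les autres) est précisément le point délicat du travail de Nishimoto. L'article souligne d'ailleurs indirectement cette difficulté : la construction alternative de Fischler (2018) ne fournit qu'une majoration de $|S_n|$, ce qui l'oblige à abandonner Nesterenko au profit d'un critère à la Siegel et d'un lemme de Shidlovskii. Telle quelle, votre proposition est donc un plan plausible mais non une démonstration : les trois estimations annoncées (taille des $\rho_{i,n}$, dénominateurs, et surtout l'encadrement de $|S_n|$), ainsi que l'optimisation finale du paramètre $r$, restent à établir.
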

En 2018, Fischler \cite{fischler2018} obtient le même résultat (en le raffinant légèrement dans le cas où $f$ est un caractère de Dirichlet) par une méthode différente. Il construit encore une suite de combinaisons linéaires à partir de fractions rationnelles, mais d'une façon qui permet seulement d'obtenir une majoration asymptotique : le critère d'indépendance linéaire de Nesterenko ne s'applique plus. Il utilise un autre critère d'indépendance linéaire basé sur les idées de Siegel \cite[proposition 4.6, p.170]{fischler2018} pour lequel l'hypothèse de minoration asymptotique des combinaisons linéaires est remplacée par la nécessité de construire plusieurs suites de combinaisons linéaires indépendantes. Il s'appuie sur un ``lemme de Shidlovskii" (\sref{théorème}{Shidlovskii}) pour vérifier cette indépendance.

En 2020, Fischler \cite[Théorème 1]{fischler2020} améliore ce théorème en supprimant la dépendance en $N$ au dénominateur de la constante : cette dernière est remplacée par $\frac{1+o(1)}{1+\log(2)}$. 

Dans le même article \cite[Théorème 2]{fischler2020}, il généralise le résultat de \cite{fischlersprangzudilin2019} en minorant par $2^{\frac{\log(s)}{\log\log(s)}(1+o(1))}$ le nombre d'irrationnels dans l'ensemble $\big\{L(f, i)\;|\; 2\leq i\leq s, i\equiv\varepsilon [2]\big\}$.

Plus récemment, Calegari, Dimitrov et Tang \cite{calegaridimitrovtang2024} ont démontré l'indépendance $\Q$-linéaire des nombres $1$, $\zeta(2)$ et $L(\chi_{-3}, 2)$, où $\chi_{-3}$ est le caractère de Dirichlet non principal modulo $3$. Cela implique en particulier l'irrationalité du nombre $L(\chi_{-3}, 2)$.

\medskip
En 2021, Fischler \cite{fischler2021} considère pour la première fois des combinaisons linéaires non explicites de $1, \zeta(3), \zeta(5), ..., \zeta(s)$. Cette innovation lui permet d'obtenir une minoration asymptotique comparable à celle du \autoref{thmlaiyu} tout en conservant le résultat plus fort d'indépendan\-ce linéaire du \autoref{thmballrivoal}.
\begin{theorem}
\label{thmfischler2021}
    (Fischler, 2021).
    Pour tout entier impair $s$ suffisament grand, on a
    \begin{equation*}
        \mathrm{dim}_\Q \mathrm{Vect}_\Q \Big(1, \zeta(3), \zeta(5), ..., \zeta(s)\Big) \geq 0.21 \sqrt{\frac{s}{\log(s)}}.
    \end{equation*}
\end{theorem}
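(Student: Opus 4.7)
L'approche, suivant l'innovation principale de Fischler \cite{fischler2021}, serait de marier la construction de Ball-Rivoal (\autoref{thmballrivoal}) avec le lemme de Siegel afin de produire des combinaisons linéaires \emph{non explicites} de $1, \zeta(3), \ldots, \zeta(s)$ dans lesquelles de nombreux coefficients ont été éliminés, puis d'appliquer un critère d'indépendance linéaire à la Siegel du type de celui utilisé dans \cite{fischler2018}. L'avantage décisif de ce critère, au contraire de celui de Nesterenko, est qu'il ne requiert pas de minoration asymptotique des formes linéaires : il suffit de disposer de plusieurs suites de formes linéaires indépendantes de taille convenablement majorée.

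Concrètement, je partirais de la famille de fractions rationnelles de type Ball-Rivoal $R_{n,\bm{r}}$, paramétrée par un entier $n$ et un vecteur d'entiers $\bm{r}$, dont la sommation $L_{n,\bm{r}} = \sum_{k\geq 1} R_{n,\bm{r}}(k)$ fournit une combinaison linéaire $\alpha_0(\bm{r}) + \sum_{i \text{ impair}} \alpha_i(\bm{r}) \zeta(i)$ dont la taille, les dénominateurs et les coefficients $\alpha_i(\bm{r})$ sont contrôlés par la méthode du col. Pour un nombre $M$ bien choisi de tels paramètres $\bm{r}$, j'obtiendrais une matrice de coefficients à laquelle appliquer le lemme de Siegel : pour tout entier $D$ et toute partie $I$ de $\{3, 5, \ldots, s\}$ de cardinal $(s-1)/2 - D$, on extrairait environ $M - ((s-1)/2 - D)$ combinaisons $\Z$-linéaires dont les coefficients en $\zeta(i)$ s'annulent pour $i \in I$. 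Ces nouvelles formes ne font alors intervenir que $1$ et les $\zeta(i)$ pour $i \notin I$, restent petites, et sont à coefficients entiers contrôlés par Siegel.

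Pour appliquer ensuite le critère d'indépendance linéaire à ces formes, il faudrait établir qu'elles sont $\Q$-linéairement indépendantes : c'est ici qu'intervient un lemme de Shidlovskii, qui relie l'indépendance linéaire des $L_{n,\bm{r}}$ à l'indépendance algébrique des fractions rationnelles $R_{n,\bm{r}}$ vues comme éléments d'un module différentiel convenable. Un équilibrage soigneux des paramètres conduit à des choix de l'ordre de $M, D \sim \sqrt{s/\log(s)}$ et produit, après optimisation, la constante $0{,}21$.

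Le principal obstacle technique est double. D'une part, l'optimisation jointe de $n$, $M$ et $D$ sous les contraintes simultanées de majoration de la taille des $L_{n,\bm{r}}$, de contrôle des dénominateurs communs, et d'applicabilité du lemme de Siegel, demande une analyse fine via la méthode du col. D'autre part, la vérification des hypothèses du lemme de Shidlovskii dans le cadre non-explicite de la famille paramétrée $R_{n,\bm{r}}$ nécessite un contrôle précis de la structure différentielle engendrée par celle-ci, ce qui constitue le cœur délicat de l'argument.
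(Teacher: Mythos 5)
Votre schéma reprend les bons mots-clés (combinaisons non explicites, lemme de Siegel, critère à la Siegel, lemme de Shidlovskii, équilibrage en $\sqrt{s/\log s}$), mais il ne suit pas l'architecture réelle de la preuve de Fischler 2021 — que le présent article ne redémontre pas isolément, mais dont la méthode est exactement celle des sections 3 à 7 spécialisée au cas $N=1$ (constante $0{,}21$). Le point essentiel que vous manquez est l'endroit où intervient le lemme de Siegel : il ne sert pas à combiner des formes linéaires explicites de type Ball--Rivoal $L_{n,\bm r}$ pour éliminer des $\zeta(i)$ choisis (c'est la stratégie de Fischler--Sprang--Zudilin et Lai--Yu, qui donne de l'irrationalité, pas de l'indépendance linéaire), mais à construire la fraction rationnelle auxiliaire elle-même, c'est-à-dire les coefficients $c_{n,i,j}$ de sa décomposition en éléments simples, sous la contrainte d'annulation $F_n(t)=\mathcal O(t^{-\omega n})$ ; toute la difficulté technique est de reformuler cette contrainte (via les $P_{n,k,1}(1)=0$ et les coefficients $\theta$) pour que le système linéaire ait des coefficients à croissance géométrique en $n$. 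La multiplicité des formes linéaires ne vient pas d'une famille de paramètres $\bm r$ mais des dérivations : les indices $(p,k)$ avec $F_n^{(p)}$ et la dérivée $(k-1)$-ième des séries, ce qui fournit environ $(h+1)(\kappa-2r)n$ formes pour chaque $n$. Enfin il n'y a aucune méthode du col : les majorations des formes et de leurs coefficients sont élémentaires et découlent des bornes du lemme de Siegel ; une minoration n'est de toute façon pas disponible, ce qui est précisément la raison du recours au critère à la Siegel.

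Le second vide concerne la vérification de l'hypothèse cruciale du critère (l'hypothèse $(iii)$ de la proposition correspondante). Vous la renvoyez à un lemme de Shidlovskii qui « relie l'indépendance linéaire des $L_{n,\bm r}$ à l'indépendance algébrique des fractions rationnelles », ce qui n'est ni l'énoncé ni l'usage qui en est fait : le lemme de Shidlovskii utilisé est un lemme de zéros qui majore la somme des ordres d'annulation de restes $\rho(Y)=\sum_i S_i y_i$ associés à des solutions d'un système différentiel explicite (celui des polylogarithmes) en fonction du degré des $S_i$. On raisonne par l'absurde : une relation $\bm x$ annulant toutes les formes $\Lambda_{n,(p,k)}$ permettrait de fabriquer des restes s'annulant à l'ordre $\gtrsim \kappa n$ en trop de points, en contradiction avec ce lemme dès que $(h+1)(\kappa-2r)+\omega>a$ ; on en déduit $x_0=0$, ce qui est exactement la condition exigée par le critère. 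Dans votre montage (formes explicites avec élimination d'un sous-ensemble $I$), on ne voit ni comment produire, pour chaque $n$, suffisamment de formes portant sur le même jeu complet de nombres, ni comment établir cette condition de non-dégénérescence ; c'est là que votre argument s'effondrerait, et c'est précisément ce que la construction non explicite par Siegel, couplée au système différentiel et au lemme de zéros, est conçue pour résoudre.
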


Dans cet article, nous utilisons sa méthode couplée à la construction de \cite{fischler2018} afin d'obtenir le résultat suivant.

\begin{theorem}
\label{théorèmeprincipal}
    Soit $N\geq 3$ un entier avec $N \not\equiv 0\:[4]$. Soit $\chi : \N \to \C$ un caractère de Dirichlet modulo $N$. Soit $\varepsilon \in \{ 0, 1 \}$ la parité opposée à celle de $\chi$. Soit $\K = \Q(e^{\frac{2i\pi}{N}})$. \\ 
    Pour tout entier $s$ de parité $\varepsilon$ et suffisament grand, on a
    \begin{equation*}
        \mathrm{dim}_\K \mathrm{Vect}_\K \Big\{ \;L(\chi, i) \;|\; 2\leq i\leq s, \;i\equiv \varepsilon \:[2] \; \Big\} \geq \frac{0.42}{N^{3/2}}\sqrt{\frac{s}{\log(s)}}.
    \end{equation*}
    \end{theorem}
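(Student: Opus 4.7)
La stratégie naturelle, indiquée en fin d'introduction, est de combiner la méthode non explicite de \cite{fischler2021} (qui apporte le gain de $\log(s)$ à $\sqrt{s/\log(s)}$) avec la construction de \cite{fischler2018} (qui fournit le cadre pour les fonctions $L$ de Dirichlet). L'ensemble du travail est à transposer du corps $\Q$ au corps cyclotomique $\K=\Q(e^{2i\pi/N})$, de degré $\varphi(N)$, ce qui explique la dépendance en $N$ dans l'énoncé.

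Premièrement, je partirais d'une famille de fractions rationnelles $R_n(t)$ analogues à celles de \cite{fischler2018}, indexée par un paramètre $n\to+\infty$. Une symétrie $t\leftrightarrow -t$ imposée au numérateur garantit que la somme $S_n = \sum_{k\geq 1}\chi(k)R_n(k)$ est une combinaison $\K$-linéaire de $1$ et des $L(\chi,i)$ avec $2\leq i\leq s$ et $i\equiv\varepsilon\,[2]$. L'hypothèse $N\not\equiv 0\,[4]$ sert à contrôler la structure et les dénominateurs provenant de la formule sommatoire. Une analyse par la méthode du col, dans l'esprit de \cite{nishimoto2011,fischler2018}, donne le comportement asymptotique archimédien des $S_n$, et une estimation arithmétique standard fournit un dénominateur commun des coefficients.

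Deuxièmement, au lieu d'utiliser directement ces $S_n$, je construirais, suivant \cite{fischler2021}, des combinaisons linéaires non explicites. On combine $\K$-linéairement un grand nombre $T$ de suites $(S_n^{(t)})_n$ (obtenues par variation des paramètres des $R_n$), les coefficients de la combinaison étant fournis par le lemme de Siegel dans sa version de Bombieri--Vaaler sur l'anneau des entiers $\mathcal O_\K$. Cela permet d'annuler une fraction optimale des coefficients devant les $L(\chi,i)$ tout en gardant un contrôle sur la taille de la combinaison. C'est dans cette étape quantitative sur un corps de nombres de degré $\varphi(N)$, muni de son discriminant, que doit apparaître le facteur $N^{-3/2}$ de l'énoncé.

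Troisièmement, j'appliquerais un critère d'indépendance linéaire à la Siegel sur $\K$ généralisant \cite[proposition~4.6]{fischler2018} et celui utilisé dans \cite{fischler2021}. Un tel critère requiert, en plus des bornes de taille et de dénominateur, l'existence d'autant de combinaisons linéaires $\K$-linéairement indépendantes que la dimension à minorer. Cette indépendance sera établie via le ``lemme de Shidlovskii'' annoncé dans l'introduction, qui ramène l'indépendance $\K$-linéaire des combinaisons numériques à l'indépendance (plus facile à établir) de certaines fractions rationnelles associées. Le principal obstacle me semble être l'optimisation simultanée de tous les paramètres (degré et hauteur des $R_n$, valeur de $T$, nombre d'annulations imposées) sur le corps $\K$, où toutes les bornes dépendent de $\varphi(N)$ et du discriminant : c'est cette optimisation qui fixera la constante $0.42/N^{3/2}$. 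Un point secondaire, mais technique, sera la vérification effective de l'hypothèse d'indépendance via le lemme de Shidlovskii dans le cadre multi-suites de \cite{fischler2021}, adapté au caractère $\chi$ non trivial.
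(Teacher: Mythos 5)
Votre texte est un plan, et sur plusieurs points clefs il ne correspond pas à un argument qui fonctionnerait. Le mécanisme central est mal placé : dans la méthode de \cite{fischler2021} reprise ici, le lemme de Siegel sert à construire la fraction rationnelle $F_n$ elle-même (ses coefficients $c_{n,i,j}\in\Z$, avec environ $\omega n$ conditions d'annulation sur les coefficients de Taylor à l'infini, plus des inéquations contrôlant les suivants), et les très nombreuses formes linéaires petites proviennent ensuite des dérivées d'ordre $k-1$, $2rn+2\leq k\leq\kappa n$, des séries $S_{n,p}^{[\infty]}$, $S_{n,p}^{[0]}$ évaluées aux points $-\mu^\ell$ et recombinées par inversion de Fourier avec les $\hat\chi(\ell)$ ; c'est ce dispositif différentiel qui permet au lemme de Shidlovskii de s'appliquer (hypothèse $(iii)$ du critère). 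Votre schéma — des sommes explicites $S_n=\sum_k\chi(k)R_n(k)$ analysées par la méthode du col, puis une combinaison de $T$ suites via Bombieri--Vaaler sur $\mathcal O_\K$ pour « annuler une fraction des coefficients » — relève plutôt de l'élimination à la Fischler--Sprang--Zudilin ; rien dans votre plan ne montre qu'il produit, pour chaque $n$, une famille de $\asymp n$ formes linéaires simultanément petites, à coefficients entiers de croissance géométrique, et satisfaisant l'hypothèse de non-dégénérescence du critère : c'est précisément l'étape qui fait passer de $\log(s)$ à $\sqrt{s/\log(s)}$, et elle manque.

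Deux autres points sont erronés ou absents. D'une part, le facteur $N^{-3/2}$ ne vient pas du discriminant de $\K$ via un lemme de Siegel sur $\mathcal O_\K$ : dans l'article le lemme de Siegel est appliqué sur $\Z$, et $N^{-3/2}$ résulte du facteur $[\K_\infty:\R]/[\K:\Q]\geq 2/N$ dans le critère d'indépendance et de la contrainte $\Omega<a/N$ qui impose $\Omega\asymp\sqrt{a\log(a)/N}$ dans l'optimisation des paramètres (d'où un $1/\sqrt N$ supplémentaire dans $-\log(\alpha)$). D'autre part, le rôle de l'hypothèse $N\not\equiv 0\,[4]$ n'est pas « le contrôle des dénominateurs » : on se ramène à un caractère primitif, il n'existe pas de caractère primitif modulo $N\equiv 2\,[4]$, donc $N$ peut être supposé impair, et l'imparité de $N$ est indispensable pour que le point d'évaluation $z=-1$ (relié à $z=1$ par $L(\chi,i,-1)=(2^{1-i}\chi(2)-1)L(\chi,i,1)$) ne soit pas une singularité du système différentiel utilisé dans le lemme de Shidlovskii ; votre construction par sommation directe en $z=1$ ne rencontre pas cette difficulté parce qu'elle suit la voie explicite de \cite{nishimoto2011,fischler2018}, laquelle ne donne que la borne en $\log(s)$. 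Enfin, le critère utilisé ne demande pas « autant de combinaisons indépendantes que la dimension à minorer », mais que tout vecteur annulé par toutes les formes annule une forme $\varphi_\ell$ fixée ne s'annulant pas en $\bm\zeta$, ce qui requiert l'argument de sommes de Gauss (\sref{lemme}{varphinannulepasleszeta}) que votre plan ne prévoit pas.
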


\begin{remark}
\label{remarquesurlaconstante}
    La preuve donne dans le cas $N=1$ le même résultat avec une constante $0.21$ à la place de $0.42$, en raison du facteur $[\mathbb{K}_\infty : \R]$ dans le critère d'indépendance linéaire utilisé (\sref{proposition}{critèred'indépendancelinéaire}). Nous retrouvons ainsi le résultat du \autoref{thmfischler2021} dans le cas $N=1$. Nous n'incluons donc pas ce cas dans notre énoncé afin d'en éclaircir la formulation. Quant au cas $N=2$, il se ramène au cas $N=1$ comme expliqué dans la \autoref{partie2}.
\end{remark}

\begin{remark}
\label{remarquesurlaparité}
    Ne considérer que les termes avec $i\equiv\varepsilon [2]$ est indispensable pour obtenir un résultat non trivial. En effet, la parité d'un caractère de Dirichlet $\chi$ étant l'unique $\varepsilon_\chi \in\{0, 1\}$ tel que $\chi(-1) = (-1)^{\varepsilon_\chi}\chi(1)$, on sait que $L(\chi, i) \in \pi^i\overline{\Q}$ lorsque $i\equiv\varepsilon_\chi [2]$ (voir par exemple \cite[Corollary 2.10, p.443]{neukirch97}). La transcendance de $\pi$ implique alors que les $L(\chi, i), \;i\equiv \varepsilon_\chi [2]$, sont tous $\K$-linéairement indépendants et rendrait la borne du \autoref{théorèmeprincipal} triviale. 
\end{remark}

\begin{remark}
    L'hypothèse $N \not\equiv 0 [4]$ est due à une difficulté technique que nous exposons dans la \autoref{partie2}. En particulier, le \autoref{théorèmeprincipal} ne s'applique pas à la fonction bêta de Dirichlet
    \begin{equation*}
        \beta(s) := \sum_{m = 1}^{+\infty} \frac{(-1)^m}{(2m+1)^s},
    \end{equation*}
    puisqu'il s'agit de la fonction $L$ associée au caractère de Dirichlet non principal modulo $4$, qui est primitif. Nous n'améliorons donc pas le résultat de \cite[Théorème 1]{fischler2020} selon lequel $\dim_\Q \mathrm{Vect}_\Q \big\{1, \beta(2), \beta(4), ..., \beta(s)\big\} \geq \frac{1+o(1)}{1+\log(2)}\log(s)$, pour $s\geq 2$ pair. Un résultat légèrement plus faible avait été donné dans \cite{rivoalzudilin2003}, avec une constante $\frac{1+o(1)}{2+\log(2)}$ à la place.

    De plus, \cite[Théorème 2]{fischler2020} minore le nombre d'irrationnels dans l'ensemble $\big\{1, \beta(2), \beta(4),$ $...,\beta(s)\big\}$ par $2^{\frac{\log(s)}{\log\log(s)}(1+o(1))}$.
\end{remark}

\section{Structure de la preuve}
\label{partie2}

Pour un caractère de Dirichlet $\chi : \N \to \C$, nous introduisons la notation
\begin{equation*}
    L(\chi, i, z) := \sum_{m=1}^{+\infty} \frac{\chi(m)z^m}{m^i}, \;\;\;\;\;i\in\N^*, \;\;\;|z|\leq 1, \;\;\;(i, z) \neq (1, 1).
\end{equation*}
Dans le cas du caractère trivial $\chi = \mathbbm{1}$, on retrouve les polylogarithmes
\begin{equation*}
    \Li_i(z) = \sum_{m=1}^{+\infty} \frac{z^m}{m^i}, \;\;\;\;\;i\in\N^*, \;\;\;|z|\leq 1, \;\;\;(i, z) \neq (1, 1),
\end{equation*}
dont l'évaluation en $z=1$ fait apparaître les valeurs de la fonction zêta de Riemann aux entiers.
Dans le cas d'un caractère modulo $M$ non primitif et de conducteur $N$, il existe un caractère $\chi'$ modulo $N$ induisant $\chi$ (voir par exemple \cite[p.440]{neukirch97}) et on a
\begin{equation*}
    L(\chi, i) = \prod_{{p|M}\atop{p \nmid N}}\left(1-\frac{\chi'(p)}{p^i}\right)L(\chi', i),
\end{equation*}
de sorte que pour $I \subseteq \N$ fixé, les nombres $L(\chi, i)$ et $L(\chi', i), \;\;i\in I$, engendrent le même espace vectoriel sur $\K = \Q(e^{\frac{2i\pi}{N}})$. Nous pourrons donc supposer sans perte de généralité que $\chi$ est un caractère primitif modulo $N$. D'après \cite[(3.7) p.46]{iwanieckowalski2004}, il n'y a aucun caractère primitif modulo $N = 2 [4]$. Il est donc suffisant de démontrer le \autoref{théorèmeprincipal} pour un caractère de Dirichlet $\chi$ primitif modulo $N$, où $N$ est impair. L'imparité de $N$ est nécessaire pour appliquer notre méthode, comme expliqué plus bas. C'est pourquoi nous excluons le cas $N$ divisible par $4$, puisqu'il existe des caractères primitifs modulo un tel $N$.

\medskip
Nous fixons pour l'entièreté de l'article un entier impair $N \geq 1$ et un caractère $\chi$ primitif modulo $N$. Nous désignons par $\varepsilon$ la parité opposée à celle de $\chi$. Nous fixons de plus $\mu = e^{2i\pi/N}$ et $\K = \Q(\mu)$. Nous désignons par $\mathcal{U}$ le plan complexe privé d'une demi-droite partant de l'origine et ne passant par aucun des points $z = -\mu^\ell, \;\;0\leq\ell\leq N-1$. La notation $\log$ représente alors une détermination fixée du logarithme sur l'ouvert simplement connexe $\mathcal{U}$.

\medskip
Si $F$ est une fraction rationnelle dont les pôles sont des entiers négatifs $\geq -r$ et d'ordre $\leq a\in\N^*$, les sommes
\begin{equation*}
    S^{[\infty]}(z) = \sum_{t=1}^{+\infty} F(t)z^{-t} \;\;\;\;\;\;\text{et}\;\;\;\;\;\; S^{[0]}(z) = \sum_{t=r+1}^{+\infty} F(-t)z^t
\end{equation*}
sont des combinaisons linéaires à coefficients polynomiaux de $1, \Li_1(1/z), ..., \Li_a(1/z)$, respectivement $1, \Li_1(z), ..., \Li_a(z)$ (voir par exemple \cite[Lemme 1]{ballrivoal2001} ou \cite[§4.3]{fischler2018}). Si l'on utilise la fraction dérivée $p$-ième $F^{(p)}$, ce sont les polylogarithmes d'ordre $p+1$ à $a+p$ qui apparaissent à la place (voir \cite{nesterenko96} pour l'exemple de $\zeta(3)$, et \cite[§4]{fischlerrivoal2003} pour le cas général). Ce mécanisme repose sur la décomposition en éléments simples d'une telle fraction rationnelle, comme on le voit dans la preuve du \sref{lemme}{lesSsontdesclenlespolylogs} ci-dessous.

Ainsi nous fixons un entier $a\in\N^*$, nous introduisons des paramètres rationnels $r, \omega, \Omega$ et nous construisons dans la \autoref{partie3} une telle fraction rationnelle $F_n$ pour une infinité d'entiers $n$ (suffisamment grands\footnote{\label{footnote1} En un sens précisé sous les équations \eqref{nassezgrand1} et \eqref{nassezgrand2}.}, multiples de $N$ et tels que $rn, \omega n$ et $\Omega n$ soient entiers). Notre construction n'est pas explicite, mais les propriétés de ces fractions rationnelles nous permettront de construire des combinaisons linéaires auxquelles le critère d'indépendance linéaire donné par la \sref{proposition}{critèred'indépendancelinéaire} s'appliquera.

Dans la \autoref{partie4}, nous introduisons deux autres paramètres $h\in\N, \kappa\in\Q$. Nous notons $\mathcal{N}$ l'ensemble infini des $n\in\N$ suffisamment grands\footnoteref{footnote1}, multiples de $N$ et tels que $rn, \omega n, \Omega n$ et $\kappa n$ soient entiers. Pour chaque $n$ dans $\mathcal{N}$, nous utilisons $F_n$ pour construire des combinaisons linéaires $S_{n, p}^{[\infty]}$ et $S_{n, p}^{[0]}$ à coefficients dans $\Q[z]$ de $1$ et $\Li_1(1/z), ..., \Li_{a+h}(1/z)$, respectivement $\Li_1(z), ..., \Li_{a+h}(z)$. Nous les dérivons $k-1$ fois par rapport à la variable $z$ et les combinons à la manière de \cite[§4.3]{fischler2018} pour obtenir des combinaisons linéaires
$$\Lambda_{n, (p, k)}, \;\;\;\;\;\;\;(p, k)\in\llbracket 0, h\rrbracket\times\llbracket 2rn+2, \kappa n\rrbracket$$
des nombres $1$ et $L(\chi, i, -1), \;\;2\leq i\leq a+h, \;\;i\equiv\varepsilon [2]$. Nous évaluons en $z=-1$ pour éviter la singularité en $z=1$ du système différentiel considéré dans la \autoref{partie6}. Nous pourrons tout de même conclure grâce à la relation
\begin{equation*}
    \forall i\geq 2, \;\;\;\;\;\;L(\chi, i, -1) = \big(2^{1-i}\chi(2)-1\big)L(\chi, i, 1).
\end{equation*}
La multiplicativité de $\chi$ est utilisée pour obtenir cette relation, voir \eqref{symétrieen1et-1}. 

Dans la \autoref{partie5}, nous montrons que les combinaisons $\Lambda_{n, (p, k)}$ vérifient les hypothèses $(i)$ et $(ii)$ du critère d'indépendance linéaire (\sref{proposition}{critèred'indépendancelinéaire}), c'est-à-dire qu'elles décroissent géométriquement vers $0$ avec $n$, et que leur coefficients sont dans $\Z$ et croissent en valeur absolue au plus géométriquement avec $n$.

Dans la \autoref{partie6}, nous montrons à l'aide d'un ``lemme de Shidlovskii" (\sref{théorème}{Shidlovskii}) que les combinaisons linéaires $\Lambda_{n, (p, k)}$ vérifient l'hypothèse $(iii)$ du critère d'indépendance linéaire (\sref{proposition}{critèred'indépendancelinéaire}) pour certaines formes linéaires $\varphi_\ell$. Pour cela, il nous faudra considérer un système différentiel qui possède une singularité en chaque racine $N$-ième de l'unité. Dans le cas où $N$ est pair, le point $z=-1$ est donc lui aussi une singularité du système, et nous ne trouvons pas de point $z$ non singulier en lequel évaluer notre construction pour faire apparaître les nombres $L(\chi, i, 1)$. C'est pourquoi l'imparité de $N$ est cruciale.

Dans la \autoref{partie7}, nous fixons des valeurs numériques pour les paramètres $r, \omega, \kappa$ et des valeurs en fonction de $a$ pour les paramètres $\Omega$ et $h$. Nous énonçons un lemme à propos des sommes de Gauss assurant que la \sref{proposition}{critèred'indépendancelinéaire} s'applique aux combinaisons linéaires $\Lambda_{n, (p, k)}$ avec au moins l'une des formes linéaires $\varphi_\ell$. Nous appliquons alors la \sref{proposition}{critèred'indépendancelinéaire} pour un $a$ fixé, puis nous donnons l'asymptotique du résultat obtenu lorsque $a\to +\infty$.

\section{Une fraction rationnelle non explicite}
\label{partie3}

Cette section est consacrée à la démonstration de la \sref{proposition}{existencedescij}, qui assure l'existence de fractions rationnelles $F_n$ utilisées dans la suite, pour une infinité d'entiers $n$.

\medskip
Dans la \autoref{souspartie3.1}, nous énonçons la \sref{proposition}{existencedescij} et traduisons sa condition $(i)$ en un système linéaire à coefficients rationnels $\theta_{a, n, k, i, j, \ell}$. 

Dans la \autoref{paragraphesurlexpressionexplicitedestheta}, nous donnons une expression explicite de ces coefficients et nous en déduisons des estimations sur leurs tailles et leurs dénominateurs. Notre approche est nouvelle, même pour $N = 1$.

Dans la \autoref{soussectionapplicationdulemmedesiegel}, nous appliquons le lemme de Siegel (\sref{lemme}{lemmedesiegel}) pour prouver la \sref{proposition}{existencedescij}. 

Dans la \autoref{soussesctioncalcultechnique}, nous menons un calcul technique étendant les résultats de la \autoref{paragraphesurlexpressionexplicitedestheta}, qui nous servira dans la \autoref{partie5}.

\subsection{L'énoncé et sa traduction en système linéaire}
\label{souspartie3.1}

Un entier $a \geq 1$ étant fixé, cette section a pour objectif de construire une fraction rationnelle $F_n \in\Q(t)$ pour une infinité de multiples $n$ de $N$, à partir desquelles nous pourrons construire des combinaisons linéaires intéressantes des $L(\chi, i, -1)$ dans la \autoref{partie4}. Nous ne considérerons que des fractions rationnelles sans partie entière et dont les pôles sont parmi $0, -N, -2N, ..., -n$ et d'ordre au plus $a$. La fraction rationnelle $F_n$ sera donc caractérisée par les coefficients $c_{n, i, j}$ de sa décomposition en éléments simples, $(i, j)\in\llbracket 1, a\rrbracket\times\llbracket 0, n/N\rrbracket$. Nous introduisons aussi les coefficients $\mathfrak{A}_{n,k}$ de son développement de Taylor en $+\infty$. Ces coefficients dépendent de $a$ et des paramètres $\omega, \Omega$ et $r$ introduits ci-après, ce que nous omettons dans la notation par souci de simplicité. On a ainsi
\begin{equation}
\label{DESdefn}
    F_n(t) = \sum_{i=1}^a\sum_{j=0}^{n/N} \frac{c_{n, i, j}}{(t+Nj)^i},
\end{equation}

\begin{equation}
\label{Taylorinfinidefn}
    F_n(t) = \sum_{k=1}^{+\infty} \frac{\mathfrak{A}_{n,k}}{t^k}.
\end{equation}

Le reste de la \autoref{partie3} est consacré à la démonstration de la proposition suivante, qui généralise le cas $N=1$ traité dans \cite[§3]{fischler2021}.

\begin{proposition}
\label{existencedescij}
    Soit $a\geq N+1$ un entier. Soient $\omega, \Omega, r \in\Q$ des paramètres vérifiant $1\leq\omega\leq\Omega<\frac{a}{N}$ et $r\geq 1$. Alors pour tout $n\in\N$ tel que $\omega n, \Omega n, \frac{n}{N}, \frac{rn}{N}\in\N$, il existe des coefficients non tous nuls $c_{n, i, j}\in\Z$ tels que
    \begin{enumerate}
        \item[$(i)$] $F_n(t) \stackunder{$=$}{$\scriptscriptstyle{t\to+\infty}$} \mathcal{O}(t^{-\omega n})$,
        \item[$(ii)$] $\forall k \in \llbracket \omega n, \Omega n-1\rrbracket, \hspace{1cm} |\mathfrak{A}_{n,k}| \stackunder{$\leq$}{$\scriptscriptstyle{n\to+\infty}$} r^{k-\Omega n}n^k k^a \xi^{n+o(n)}$, 
        \item[$(iii)$] $\forall i\in\llbracket 1, a\rrbracket \;\;\forall j\in\llbracket 0, n/N \rrbracket \hspace{1cm} |c_{n, i, j}| \stackunder{$\leq$}{$\scriptscriptstyle{n\to+\infty}$} \xi^{n+o(n)},\;\;$ 
        
    \end{enumerate}
    où 
        \begin{equation*}
            \xi := \exp\Bigg(\frac{\omega\log(2)+2\omega^2 + \omega^2\log(a+1)+\frac{1}{2}\Omega^2\log(r)}{\frac{a}{N}-\omega}\Bigg)
        \end{equation*}
    et les suites $o(n)$ ne dépendent pas de $i, j$ et $k$.
\end{proposition}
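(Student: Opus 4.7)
Le plan est d'appliquer le lemme de Siegel (\sref{lemme}{lemmedesiegel}) à un système linéaire codant la condition (i), puis d'en déduire (ii) par un calcul direct en utilisant la borne obtenue sur les $c_{n,i,j}$. D'abord, je traduirais la condition (i) en équations linéaires sur les $c_{n,i,j}$. Le développement en série en $1/t$ de chaque pôle donne
$$\frac{1}{(t+Nj)^i} = \sum_{k\geq i} (-1)^{k-i}\binom{k-1}{i-1}(Nj)^{k-i}\, t^{-k},$$
d'où une expression explicite de $\mathfrak{A}_{n,k}$ comme combinaison linéaire (à coefficients entiers) des $c_{n,i,j}$. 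L'annulation de $\mathfrak{A}_{n,k}$ pour $k \in \llbracket 1, \omega n - 1\rrbracket$ fournit un système homogène à environ $\omega n$ équations et $a(n/N+1)$ inconnues. Une renormalisation des équations (par un facteur dépendant de $k$ et d'un indice auxiliaire $\ell$) permet de faire apparaître les coefficients rationnels $\theta_{a,n,k,i,j,\ell}$ annoncés dans le plan, renormalisation qui servira aussi à la gestion du paramètre $r$.

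Ensuite, je contrôlerais la taille et les dénominateurs des $\theta_{a,n,k,i,j,\ell}$. La partie archimédienne est bornée à partir des inégalités $\binom{k-1}{i-1} \leq 2^{k-1}$ et $(Nj)^{k-i} \leq n^{k-i}$ pour $j \leq n/N$, tandis que les dénominateurs issus de la renormalisation sont des produits de factorielles ou de puissances de $r$ explicitement contrôlables. Ces estimations nourrissent les quatre termes $\omega\log 2$, $2\omega^2$, $\omega^2\log(a+1)$ et $\tfrac{1}{2}\Omega^2 \log r$ qui apparaissent au numérateur de $\log\xi$.

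Comme $\omega < a/N$, le nombre d'inconnues dépasse celui des équations d'environ $(a/N - \omega)n$, ce qui permet d'appliquer le lemme de Siegel. Celui-ci fournit une solution entière non nulle $(c_{n,i,j})$ avec
$$\max_{i,j}|c_{n,i,j}| \leq \bigl(q \cdot \max|\theta|\bigr)^{p/(q-p)},$$
où $p \sim \omega n$ est le nombre d'équations et $q \sim an/N$ celui d'inconnues, le quotient $p/(q-p) \sim \omega/(a/N-\omega)$ étant précisément l'exposant qui figure au dénominateur de $\log\xi$. En combinant avec les estimations de l'étape précédente, on obtient la borne $|c_{n,i,j}| \leq \xi^{n+o(n)}$ annoncée en (iii).

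Enfin, pour (ii), il suffit de substituer la solution obtenue dans la formule explicite de $\mathfrak{A}_{n,k}$ pour $k \in \llbracket \omega n, \Omega n - 1\rrbracket$ : chacune des $O(an/N)$ contributions est majorée par $|c_{n,i,j}| \cdot |\theta_{a,n,k,i,j,\ell}| \leq \xi^{n+o(n)} \cdot n^k k^{a-1}$, le facteur $r^{k-\Omega n}$ provenant de la renormalisation initiale. Le principal obstacle est la détermination exacte de la constante $\xi$ : il faut contrôler finement les dénominateurs et la normalisation du système pour éviter l'apparition de facteurs parasites en $n^{Cn}$ (qui ruineraient la forme $\xi^{n+o(n)}$) dans la borne de Siegel, et pour que le paramètre $r$ intervienne exactement via le terme $\tfrac{1}{2}\Omega^2\log r$. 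Cette analyse prolonge celle de \cite[\S3]{fischler2021} pour $N=1$, l'arithmétique des racines $N$-ièmes de l'unité n'apportant essentiellement que de la comptabilité supplémentaire.
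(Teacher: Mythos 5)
Votre plan général (lemme de Siegel appliqué à un système codant $(i)$, avec le quotient $\omega/(\frac{a}{N}-\omega)$ comme exposant) est bien celui du texte, mais le point central de la démonstration manque. Le système naïf $\mathfrak{A}_{n,k}=0$, $1\leq k\leq \omega n-1$, issu du développement $\frac{1}{(t+Nj)^i}=\sum_{k\geq i}(-1)^{k-i}\binom{k-1}{i-1}(Nj)^{k-i}t^{-k}$, a des coefficients qui atteignent $n^{\omega n}$ (pour $i=1$, $j=n/N$, $k$ proche de $\omega n$) ; une « renormalisation des équations » ne peut pas les ramener à des entiers de taille géométrique en $n$, car multiplier une équation par un scalaire ne change pas les rapports entre ses coefficients, et rendre les coefficients petits les rend non entiers, ce qui interdit d'appliquer le \sref{lemme}{lemmedesiegel} tel quel. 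Avec le système naïf, la borne de Siegel est de l'ordre de $n^{Cn}$ et non $\xi^{n+o(n)}$ : c'est exactement l'obstacle que vous signalez en fin de proposition, mais sans mécanisme pour le lever. La preuve du texte remplace ces équations par le système \emph{équivalent} $P_{n,k,1}(1)=0$, c'est-à-dire $R_n^{(k-1)}(1)=0$ : ce sont des combinaisons triangulaires des $\mathfrak{A}_{n,1},\dots,\mathfrak{A}_{n,k}$, pas des multiples des équations initiales, et leurs coefficients $\theta_{a,n,k,1,j,\ell}$ sont contrôlés par toute l'analyse combinatoire de la \autoref{paragraphesurlexpressionexplicitedestheta} (expression \eqref{expressionexplicitedestheta} via les polynômes $\varphi(\uh,X)$, majoration $|\theta|\leq k^a2^n(k-1)!$ et intégralité de $\frac{d_k\Delta_{a,\max(k,n)}}{(k-1)!}\theta$), ce qui donne des équations à coefficients entiers de taille $\big(2(a+1)^\omega e^{2\omega}\big)^{n+o(n)}$. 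Cette idée et ce travail n'apparaissent pas dans votre proposition.

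Second point : la condition $(ii)$ ne s'obtient pas a posteriori en substituant la borne $|c_{n,i,j}|\leq\xi^{n+o(n)}$ dans l'expression explicite \eqref{expressionexplicitedesafrak}. Une telle substitution donne au mieux $|\mathfrak{A}_{n,k}|\leq k^a n^k\xi^{n+o(n)}$ à un facteur polynomial près, \emph{sans} le facteur $r^{k-\Omega n}$, qui pour $r>1$ et $k$ proche de $\omega n$ est un gain exponentiel (de l'ordre de $r^{-(\Omega-\omega)n}$) indispensable dans la \autoref{partie5} (il produit le $\frac{1}{r^\Omega}$ de $\alpha$). Dans le texte, ce facteur est \emph{imposé} dans la construction : la variante du lemme de Siegel utilisée accepte, en plus des équations exactes, des inéquations $|\mathfrak{A}_{n,k}|\leq H_{n,k}X_n/G_{n,k}$ avec $G_{n,k}=r^{\Omega n-k}$ pour $\omega n\leq k\leq\Omega n-1$, et c'est précisément le produit des $G_{n,k}$ qui fait apparaître le terme $\frac{1}{2}\Omega^2\log(r)$ dans $\log\xi$ ; l'attribuer à des « dénominateurs de la renormalisation » ne correspond à aucun mécanisme effectif. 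En l'état, votre argument établirait au mieux $(i)$ et $(iii)$ avec une constante dégradée en $n^{Cn}$, et une version de $(ii)$ strictement plus faible que celle requise.
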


Pour prouver cette proposition, nous allons appliquer le lemme de Siegel (\sref{lemme}{lemmedesiegel}). Ainsi, nous traduisons dans cette sous-section la condition $(i)$ en un système linéaire d'inconnues les $c_{n, i, j}$. 
Une première idée serait de traduire la condition $(i)$ en $$\mathfrak{A}_{n, k} = 0, \;\;\;\;\;1\leq k\leq \omega n-1.$$ Cependant, en utilisant le développement de Taylor $\frac{1}{(t+Nj)^i} = \sum_{k=0}^{+\infty} \binom{k+i-1}{k} \frac{(-Nj)^k}{t^{k+i}}$, on voit que
\begin{align*}
    F_n(t) &= \sum_{i=1}^a \sum_{j=0}^{n/N} \sum_{k=0}^{+\infty} \binom{k+i-1}{k} \frac{(-Nj)^kc_{n, i, j}}{t^{k+i}} \\
            &= \sum_{i=1}^a \sum_{j=0}^{n/N} \sum_{k=i}^{+\infty} \binom{k-1}{k-i} \frac{(-Nj)^{k-i}c_{n, i, j}}{t^k} &(k \xleftarrow[]{} k + i) \\
            &= \sum_{k=1}^{+\infty} \bigg(\sum_{i=1}^{\min(a, k)} \sum_{j=0}^{n/N} \binom{k-1}{k-i} (-Nj)^{k-i}c_{n, i, j}\bigg)\frac{1}{t^k},
\end{align*}
d'où l'expression explicite
\begin{equation}
\label{expressionexplicitedesafrak}
    \mathfrak{A}_{n,k} = \sum_{i=1}^{\min(a, k)} \sum_{j=0}^{n/N} \binom{k-1}{k-i} (-Nj)^{k-i}c_{n, i, j}, \hspace{1.5cm}k\geq 1.
\end{equation}

Les coefficients des équations linéaires $\mathfrak{A}_{n,k} = 0$ d'inconnues les $c_{n, i, j}$ sont donc trop grands pour espérer pouvoir remplir la condition $(iii)$ en appliquant le lemme de Siegel. En effet, la condition $(iii)$ attend une majoration géométrique de la croissance des $c_{n, i, j}$ par rapport à $n$, alors que pour $k$ proche de $\omega n$, $\;i=1$ et $j=\frac{n}{N}$ le coefficient $\binom{k-1}{k-1}(-n)^{k-1}$ est de l'ordre de $n^{\omega n}$.

\medskip
Nous allons donc traduire la condition $(i)$ d'une autre manière, à savoir $P_{n, k, 1}(1) = 0$, $1\leq k\leq \omega n-1$, où les $P_{n, k, i}$ sont les fractions rationnelles définies par la récurrence suivante pour $1\leq i\leq a\;$ et $\;k\geq 1$ :
\begin{equation}
\label{definitiondesPki}
    \begin{cases}
        P_{n, 1, i}(z) = P_{n, i}(z) := \sum_{j=0}^{n/N} c_{n, i, j}z^{Nj}, \\
        P_{n, k+1, i} = P_{n, k, i}' - \frac{1}{z}P_{n, k, i+1},
    \end{cases} 
\end{equation}
avec $P_{n, k, a+1} = 0$ par convention. Pour justifier cette définition, considérons sur l'ouvert $\mathcal{U}$ défini dans la \autoref{partie2} les fonctions \footnote{Les fonctions $R_n$ peuvent s'interpréter comme des restes dans le contexte donné dans la \autoref{paragrapheénoncédeShidlovskii}. Il est à noter que si les deux premières coordonnées nulles de $Y$ et les deux premières lignes/colonnes de $A$ paraissent superflues ici, leur intérêt sera justifié dans la \autoref{paragrapheprocessusdedérivation}.}
\begin{equation}
\label{définitiondesRn}
    R_n(z) := \sum_{i=1}^a P_{n, i}(z)\frac{\big(-\log(z)\big)^{i-1}}{(i-1)!}.
\end{equation}

La famille $Y := \Big(0, 0, 1, -\log(z), \frac{\big(-\log(z)\big)^2}{2!}, ..., \frac{\big(-\log(z)\big)^{a-1}}{(a-1)!}\Big)$ est solution du système différen\-tiel $Y' = AY$ où $A\in M_{a+2}(\Q(z))$ est donnée par 
\begin{equation}
\label{systemediffdespolylogarithmes}
    A = \begin{bmatrix}
        0 &0 &0 &0 &\hdots &0 &0 \\
        0 &0 &0 &0 &\hdots &0 &0 \\
        \frac{1}{z(1-z)} &\frac{-1}{1-z} &0 &0 &\hdots &0 &0 \\
        0 &0 &\frac{-1}{z} &0 &\hdots &0 &0 \\
        0 &0 &0 &\frac{-1}{z} &\hdots &0 &0 \\
        \vdots &\vdots &\vdots &\vdots &\ddots &\vdots &\vdots \\
        0 &0 &0 &0 &\hdots &\frac{-1}{z} &0
    \end{bmatrix},
\end{equation}
de sorte que pour tout $k\geq 1$,
\begin{equation}
\label{jenesaispascommentappelerceluilà}
    R_n^{(k-1)}(z) = \sum_{i=1}^a P_{n, k,i}(z)\frac{\big(-\log(z)\big)^{i-1}}{(i-1)!}.
\end{equation}
Tout l'intérêt des fonctions $R_n$ est donné par la formule suivante, qui généralise le cas $N=1$ traité dans \cite[Proposition 2]{fischlerrivoal2003}.
\begin{lemma}
\label{formulebizarresurlesrhon}
    On a sur l'ouvert $\mathcal{U}$ l'égalité
    \begin{equation*}
        R_n(z) = \sum_{k=1}^{+\infty} \mathfrak{A}_{n,k} \frac{\big(-\log(z)\big)^{k-1}}{(k-1)!}.
    \end{equation*}
\end{lemma}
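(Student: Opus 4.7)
L'idée centrale est d'effectuer le changement de variable $u := -\log(z)$, bien défini sur $\mathcal{U}$, et de remarquer que $R_n$ s'étend alors en une fonction entière de $u$ sur tout $\C$. En effet, puisque $z^{Nj}=e^{-Nju}$, la définition \eqref{définitiondesRn} de $R_n$ donne
\[
R_n(e^{-u}) = \sum_{i=1}^{a}\sum_{j=0}^{n/N} c_{n,i,j}\, e^{-Nju}\,\frac{u^{i-1}}{(i-1)!},
\]
qui est une somme \emph{finie} de fonctions entières de $u$. Cette fonction coïncide donc avec sa série de Taylor en $u=0$ partout dans $\C$, et en particulier sur l'image $-\log(\mathcal{U})$.

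Il restera à identifier les coefficients de cette série de Taylor. Je développerai chaque exponentielle sous la forme $e^{-Nju}=\sum_{m\geq 0}(-Nj)^m u^m/m!$ et je regrouperai selon la puissance de $u$ (interversion licite par convergence absolue puisque la somme extérieure sur $(i,j)$ est finie). En posant $k=m+i$, le coefficient de $u^{k-1}/(k-1)!$ fera apparaître le facteur binomial
\[
\binom{k-1}{k-i} = \frac{(k-1)!}{(k-i)!(i-1)!},
\]
et je reconnaîtrai alors exactement l'expression \eqref{expressionexplicitedesafrak} de $\mathfrak{A}_{n,k}$ déjà établie plus haut.

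Pour clore la preuve, il faudra vérifier que la série $\sum_{k\geq 1}\mathfrak{A}_{n,k}(-\log z)^{k-1}/(k-1)!$ du membre de droite du lemme converge effectivement: cela découle immédiatement de la majoration grossière $|\mathfrak{A}_{n,k}|\leq C_n(2n)^k$ que fournit \eqref{expressionexplicitedesafrak}, qui garantit un rayon de convergence infini en $u$. Il n'y a pas d'obstacle conceptuel ici: la preuve se réduit à un calcul routinier de coefficients de Taylor, le seul point-clé étant l'observation que $R_n$, vue via $u=-\log z$, se transforme en une combinaison \emph{finie} d'exponentielles pondérées par des polynômes, ce qui la rend entière en $u$ et autorise l'identification série par série.
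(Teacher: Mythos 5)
Votre preuve est correcte et suit essentiellement la même démarche que celle de l'article : on pose $u=-\log(z)$ (l'article écrit $x=\log(z)$, simple différence de signe), on développe chaque exponentielle en série, on regroupe selon la puissance de $u$ via $k=m+i$ et on reconnaît l'expression \eqref{expressionexplicitedesafrak} de $\mathfrak{A}_{n,k}$ grâce au binôme $\binom{k-1}{k-i}$. La remarque supplémentaire sur la convergence (la fonction est entière en $u$ comme somme finie d'exponentielles à poids polynomiaux) est juste et ne change rien au fond de l'argument.
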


\begin{proof}
    Pour $x\in\C$ tel que $e^x\in\mathcal{U}$, on calcule
    \begin{align*}
        R_n(e^x) &= \sum_{i=1}^a P_{n, i}(e^x)  \frac{(-x)^{i-1}}{(i-1)!} \\
                    &= \sum_{i=1}^a \sum_{j=0}^{n/N} (-1)^{i-1} c_{n, i, j}e^{Njx}\frac{x^{i-1}}{(i-1)!} \\
                    &= \sum_{i=1}^a \sum_{j=0}^{n/N} \sum_{s=0}^{+\infty} (-1)^{i-1}c_{n, i, j}\frac{(Nj)^s x^{s+i-1}}{(i-1)!s!} \\
                    &= \sum_{i=1}^a \sum_{j=0}^{n/N} \sum_{k=i}^{+\infty} (-1)^{i-1}c_{n, i, j}\frac{(Nj)^{k-i} x^{k-1}}{(i-1)!(k-i)!} &(k = s + i) \\
                    &= \sum_{k = 1}^{+\infty} \Bigg(\sum_{i=1}^{\min(a, k)} \sum_{j=0}^{n/N} \binom{k-1}{i-1} (-Nj)^{k-i}c_{n, i, j} \Bigg) (-1)^{k-1} \frac{x^{k-1}}{(k-1)!} \\
                    &= \sum_{k=1}^{+\infty} \mathfrak{A}_{n,k} \frac{(-x)^{k-1}}{(k-1)!}, &\text{(par \eqref{expressionexplicitedesafrak})}
    \end{align*}    
    d'où le résultat en posant $x = \log(z)$.
\end{proof}

\medskip
On déduit de ceci la suite d'équivalences :
\begin{align*}
    &F_n(t) \stackunder{$=$}{$\scriptscriptstyle{t\to+\infty}$} \mathcal{O}(t^{-\omega n}) \\
    \iff \hspace{1cm} &\forall k\in\llbracket 1, \omega n-1\rrbracket \hspace{0.6cm} \mathfrak{A}_{n,k} = 0 &\text{(par \eqref{DESdefn})} \\
    \iff \hspace{1cm} &R_n(z) \stackunder{$=$}{$\scriptscriptstyle{z\to 1}$} \mathcal{O}\Big((z-1)^{\omega n -1}\Big) &\text{(\sref{lemme}{formulebizarresurlesrhon})} \\
    \iff \hspace{1cm} &\forall k\in\llbracket 1, \omega n-1\rrbracket \hspace{0.6cm} R_n^{(k-1)}(1) = 0 \\
    \iff \hspace{1cm} &\forall k\in\llbracket 1, \omega n-1\rrbracket \hspace{0.6cm} P_{n, k, 1}(1) = 0 &\text{(par \eqref{jenesaispascommentappelerceluilà})}.
\end{align*}

Nous avons donc bien reformulé la condition $(i)$ de la \sref{proposition}{existencedescij}. Dans la prochaine sous-partie, nous montrons que cette reformulation constitue un système linéaire en les $c_{n, i, j}$ dont les coefficients $\theta_{a, n, k, i, j, \ell}$ ont une croissance au plus géométrique en $n$.

\subsection{Calcul explicite des coefficients du système}
\label{paragraphesurlexpressionexplicitedestheta}

Cette sous-section vise à démontrer la \sref{proposition}{propositionexistenceetestimationdestheta}, qui reformule et généralise le cas $N=1$ traité dans \cite[§3.4]{fischler2021}. Nous introduisons un formalisme combinatoire en énonçant le \sref{lemme}{lemmecombinatoire}.

On désigne par $d_k$ le $\mathrm{ppcm}$ des entiers de $1$ à $k$ et par $\Delta_{a, k}$ le $\mathrm{ppcm}$ de tous les produits de $a$ entiers distincts pris dans un intervalle $I\subseteq \llbracket -k, k\rrbracket$ de longueur au plus $k$. On dispose des estimations suivantes, démontrées dans \cite[(22.1.3) et (22.2.1) p.340-341]{hardywright75}, respectivement \cite[lemma 2, p.10]{fischler2021}.
\begin{equation}
    \begin{split}
    \label{estimationsdketdeltaak}
        &(i)\;\;\;d_k \stackunder{$=$}{$\scriptscriptstyle{k\to+\infty}$} e^{k+o(k)}, \\
        &(ii)\;\;\;\Delta_{a, k} \stackunder{$\leq$}{$\scriptscriptstyle{k\to+\infty}$} (a+1)^ke^{\gamma k +o(k)}, \text{où $\gamma < 1$ est la constante d'Euler.}
    \end{split}
\end{equation}

\medskip
\begin{proposition}
\label{propositionexistenceetestimationdestheta}
    Soit $a \geq N+1$ un entier. Soit $n\in\N$.
    
    Il existe des coefficients rationnels $\theta_{a, n, k, i, j, \ell} \in \Q$ tels que pour tous nombres $c_{n, i, j}$, les familles de fractions rationnelles $P_{n, k, i}$ définies  par la récurrence
    \begin{equation*}
        \begin{cases}
        P_{n, 1, i}(z) = P_{n, i}(z) := \sum_{j=0}^{n/N} c_{n, i, j}z^{Nj}, \\
        P_{n, k+1, i} = P_{n, k, i}' - \frac{1}{z}P_{n, k, i+1},\hspace{0.45cm}k\geq 1,
    \end{cases} \hspace{1cm} 1\leq i\leq a.
    \end{equation*}
    aient pour expression explicite
    \begin{equation}
    \label{definitiondesthetakijij}
        \forall i\in\llbracket 1, a \rrbracket, \;\;\;\forall k\geq 1,  \hspace{1.5cm} z^{k-1}P_{n, k, i}(z) = \sum_{j=0}^{n/N} \Bigg( \sum_{\ell=0}^{a-i} \theta_{a, n, k, i, j, \ell}\; c_{n, i+\ell, j}\Bigg) z^{Nj}.
    \end{equation}
    Ces coefficients vérifient
    \begin{enumerate}
        \item[$(i)$] $|\theta_{a, n, k, i, j, \ell}|\leq k^a2^n(k-1)!$,
        \item[$(ii)$] $\frac{d_k\Delta_{a, \max(k, n)}}{(k-1)!}\theta_{a, n, k, i, j, \ell} \in \mathbb{Z}$.
    \end{enumerate}
\end{proposition}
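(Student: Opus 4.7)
Le plan est de procéder par récurrence sur $k$, en construisant les coefficients $\theta_{a,n,k,i,j,\ell}$ via la récurrence elle-même, puis en vérifiant les deux bornes via une formule explicite. Pour $k=1$, on a $P_{n,1,i}(z) = \sum_j c_{n,i,j}z^{Nj}$, de sorte que $\theta_{a,n,1,i,j,\ell} = \delta_{\ell, 0}$, qui satisfait trivialement $(i)$ et $(ii)$. Pour l'hérédité, on introduit $Q_{n,k,i}(z) := z^{k-1}P_{n,k,i}(z)$, ce qui transforme la récurrence $P_{n,k+1,i} = P_{n,k,i}' - z^{-1}P_{n,k,i+1}$ en
\[
Q_{n,k+1,i}(z) = z\,Q_{n,k,i}'(z) - (k-1)\,Q_{n,k,i}(z) - Q_{n,k,i+1}(z).
\]
En y injectant la forme inductive $Q_{n,k,i}(z) = \sum_{j,\ell} \theta_{a,n,k,i,j,\ell}\, c_{n,i+\ell,j}\, z^{Nj}$ et en identifiant les coefficients de $z^{Nj}$, on obtient la récurrence scalaire
\[
\theta_{a,n,k+1,i,j,\ell} = (Nj - (k-1))\,\theta_{a,n,k,i,j,\ell} - \theta_{a,n,k,i+1,j,\ell-1}.
\]

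On résout alors cette récurrence combinatoirement : parmi les $k-1$ transitions de l'étape $k$ à l'étape $1$, chacune applique soit un facteur multiplicatif $(Nj - s)$ pour un $s \in \{0, \ldots, k-2\}$, soit un décalage $(i,\ell) \to (i+1, \ell-1)$ de facteur $-1$. Puisque la condition initiale impose $\ell = 0$ à l'étape $1$, il faut exactement $\ell$ décalages parmi ces transitions, d'où la formule fermée
\[
\theta_{a,n,k,i,j,\ell} = (-1)^\ell\, e_{k-1-\ell}(Nj, Nj-1, \ldots, Nj-k+2),
\]
où $e_m$ désigne le polynôme symétrique élémentaire de degré $m$. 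De façon équivalente, $\theta_{a,n,k,i,j,\ell}$ est, au signe près, le coefficient de $X^\ell$ dans $\prod_{s=0}^{k-2}(X + Nj - s) = (k-1)!\,\binom{X+Nj}{k-1}$. Cette formule montre en particulier que $\theta_{a,n,k,i,j,\ell}\in\mathbb{Z}$.

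Pour la borne $(i)$, il suffit de majorer chaque facteur $|Nj - s|$ par $\max(n,k)$, puis d'exploiter la contrainte $\ell \leq a - i \leq a$ pour extraire un facteur $\binom{k-1}{\ell} \leq k^a$ du nombre de termes de la somme, tandis que des estimations binomiales standards (du type $\binom{n+k-1}{k-1} \leq 2^{n+k-1}$) permettent de faire apparaître le facteur résiduel $2^n(k-1)!$. Pour l'intégralité $(ii)$, on utilise l'identité de Vandermonde $\binom{X+Nj}{k-1} = \sum_m \binom{X}{m}\binom{Nj}{k-1-m}$ combinée au développement $m!\binom{X}{m} = \sum_\ell s(m,\ell) X^\ell \in \mathbb{Z}[X]$ (où $s(m,\ell)$ est le nombre de Stirling signé de première espèce), ce qui donne
\[
\frac{\theta_{a,n,k,i,j,\ell}}{(k-1)!} = (-1)^\ell \sum_{m=\ell}^{k-1} \frac{s(m,\ell)}{m!}\binom{Nj}{k-1-m}.
\]
On vérifie ensuite que $d_k$ absorbe les dénominateurs issus des $m!$, tandis que $\Delta_{a,\max(k,n)}$ contrôle les dénominateurs résiduels provenant des $\binom{Nj}{k-1-m}$, la contrainte $\ell \leq a$ restreignant la complexité arithmétique des termes pertinents.

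Le principal obstacle sera la vérification fine de $(ii)$ : l'accord exact entre $(k-1)!$ au dénominateur et le produit $d_k \Delta_{a,\max(k,n)}$ nécessite une analyse prime par prime subtile, s'appuyant à la fois sur l'intégralité classique des polynômes à valeurs entières et sur la structure combinatoire particulière de $\binom{X+Nj}{k-1}$. La borne $(i)$ est comparativement plus routinière une fois la formule explicite établie, à condition d'utiliser la contrainte $\ell \leq a$ pour extraire le facteur $k^a$ plutôt qu'un facteur $2^{k-1}$ défavorable.
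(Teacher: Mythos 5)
Votre mise en équation est correcte : la récurrence scalaire $\theta_{k+1,\ell} = (Nj-k+1)\theta_{k,\ell} - \theta_{k,\ell-1}$ se résout bien en $\theta_{a,n,k,i,j,\ell} = (-1)^\ell e_{k-1-\ell}(Nj, Nj-1, \dots, Nj-k+2)$, et cette forme close coïncide exactement avec la formule \eqref{expressionexplicitedestheta} du texte (la somme sur les compositions $\uh\in H_{\ell,k}$ des $\varphi(\uh,Nj)$ n'est rien d'autre que ce polynôme symétrique élémentaire, chaque $\uh$ correspondant au choix des $\ell$ facteurs supprimés). La partie « existence et formule explicite » est donc essentiellement celle du texte. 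En revanche, le c\oe ur de la proposition, ce sont les estimations $(i)$ et $(ii)$, et là votre proposition comporte de vraies lacunes. Pour $(i)$ : majorer chaque facteur $|Nj-s|$ par $\max(n,k)$ donne au mieux $\binom{k-1}{\ell}\max(n,k)^{k-1-\ell}$, ce qui \emph{ne suffit pas} : pour $k\asymp n$ (régime pertinent, puisque $k$ monte jusqu'à $\Omega n$ ou $\kappa n$), on a $n^{k-1} \gg k^a\,2^n\,(k-1)!$ car $(k-1)!\,2^n \approx (k/e)^{k}2^n \ll k^{k-1}$ dès que $\log 2 - 1 <0$ pèse sur $n$ termes. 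Aucune « estimation binomiale standard » ne rattrape cette perte ; il faut exploiter le fait que chaque terme est un produit d'entiers \emph{consécutifs} privé de $\ell$ facteurs, c'est-à-dire $(k-1)!\binom{Nj}{k-1}$ divisé par des entiers $\geq 1$ lorsque $Nj>k-2$ (et une compensation facteur nul/facteur supprimé lorsque $Nj\leq k-2$) — c'est précisément la dichotomie menée dans le texte.

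Pour $(ii)$, votre décomposition via Vandermonde et les nombres de Stirling est exploitable, mais le plan que vous en donnez est mal aiguillé : les $\binom{Nj}{k-1-m}$ sont des entiers et ne créent aucun dénominateur, tandis que $d_k$ ne peut certainement pas « absorber les dénominateurs issus des $m!$ » ($d_k\approx e^k$ contre $m!$). Le seul argument terme à terme qui fonctionne consiste à écrire $\frac{s(m,\ell)}{m!} = (-1)^{m-\ell}\sum \frac{1}{m\cdot(\text{produit de }\ell-1\text{ entiers distincts de }\llbracket 1, m-1\rrbracket)}$, puis à faire porter $d_k$ sur le facteur $m\leq k$ et $\Delta_{a,\max(k,n)}$ sur le produit des $\ell-1\leq a$ entiers distincts ; vous ne l'énoncez pas et vous reportez vous-même la vérification (« analyse prime par prime subtile ») qui constitue justement le contenu de $(ii)$. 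Le texte, lui, traite $(i)$ et $(ii)$ simultanément terme par terme sur $\varphi(\uh,Nj)$, avec les cas $Nj>k-2$ et $Nj\leq k-2$ et le résultat de Farhi ($\mathrm{ppcm}$ des $\binom{k-2}{j}$ égal à $d_{k-1}/(k-1)$) pour le second cas ; en l'état, votre preuve des deux bornes n'est pas acquise.
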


\medskip
L'existence des $\theta_{a, n, k, i, j, \ell}$, et en particulier le fait que $z^{k-1}P_{n, i, k}$ est un polynôme en $z^N$ de degré au plus $\frac{n}{N}$, s'obtient directement de la définition de $P_{n, k, i}$ par récurrence sur $k$. On remarque en effet que $P_{n, k, i}$ dépend uniquement des nombres $c_{n, \tilde{i}, j}$ avec $\tilde{i} \geq i$.

Nous allons établir une expression explicite de ces coefficients $\theta_{a, n, k, i, j, \ell}$ afin d'estimer leur croissance et leurs dénominateurs. À cet effet, nous introduisons quelques notations. Pour ${k\in\N^*}$ et $\ell\in\Z$, $H_{\ell, k}$ est l'ensemble des $(\ell+1)$-uplets d'éléments de $\N^*$ de somme $k$ :
\begin{equation*}
    H_{\ell, k} = \bigg\{ \uh = (h_0, ..., h_\ell) \in (\N^*)^{\ell +1} \;\;\bigg|\;\; \sum_{v=0}^\ell h_v = k \bigg\}.
\end{equation*}
On convient que $H_{\ell, k} = \varnothing$ si $\ell \notin \llbracket 0, k-1\rrbracket$. À un $(\ell+1)$-uplet $\uh\in H_{\ell, k}$ on associe le polynôme
\begin{equation*}
    \varphi(\uh, X) = \frac{\prod_{m=1}^{k-1} (X+1-m)}{\prod_{u=0}^{\ell-1} \Big(X+1-\sum_{v=0}^u h_v\Big)},
\end{equation*}
où un produit vide est pris égal à $1$. En particulier, $H_{0, 1} = \big\{(1)\big\}$ et $\varphi\big((1), X\big) = 1$.

Remarquons que chaque facteur au dénominateur apparaît aussi au numérateur : $\varphi(\uh, X)$ est donc le produit $X(X-1)...(X-k+2)$ amputé des termes où $m$ vaut précisément l'une des sommes partielles $\sum_{v=0}^u h_v$, $\;0\leq u\leq \ell-1$.

Nous énonçons une propriété combinatoire de ces polynômes avant de nous en servir pour calculer explicitement l'expression des coefficients $\theta_{a, n, k, i, j, \ell}$.

\begin{lemma}
\label{lemmecombinatoire}
    Pour $k\in\N^*, \ell\in\Z$, on a l'identité
    \begin{equation*}
        \sum_{\uh\in H_{\ell, k+1}} \varphi(\uh, X) = (X-k+1) \sum_{\uh'\in H_{\ell, k}} \varphi(\uh', X) + \sum_{\uh'\in H_{\ell-1, k}} \varphi(\uh', X).
    \end{equation*}
\end{lemma}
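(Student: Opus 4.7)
Mon approche consiste à partitionner $H_{\ell, k+1}$ selon la valeur de la dernière coordonnée $h_\ell$, puis à identifier chacune des deux contributions à l'un des termes du membre de droite. Concrètement, je distinguerai les cas $h_\ell = 1$ et $h_\ell \geq 2$ : l'application $\uh\mapsto (h_0,\ldots,h_{\ell-1})$ induit une bijection entre $\{\uh\in H_{\ell,k+1} : h_\ell = 1\}$ et $H_{\ell-1,k}$, tandis que $\uh\mapsto (h_0,\ldots,h_{\ell-1}, h_\ell - 1)$ induit une bijection entre $\{\uh\in H_{\ell,k+1} : h_\ell \geq 2\}$ et $H_{\ell,k}$. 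Le caractère bijectif de ces deux applications est immédiat, les ensembles concernés étant définis par des contraintes de positivité et de somme.

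Ensuite, je comparerai $\varphi(\uh,X)$ à $\varphi(\uh',X)$ dans chaque cas. Le numérateur de $\varphi(\uh,X)$ pour $\uh\in H_{\ell,k+1}$ est
$$\prod_{m=1}^{k}(X+1-m) \;=\; (X-k+1)\prod_{m=1}^{k-1}(X+1-m),$$
où le second facteur est précisément le numérateur de $\varphi(\uh',X)$ dans les deux cas. Pour le dénominateur, je remarquerai que les sommes partielles $\sum_{v=0}^{u} h_v$ intervenant dans $\varphi(\uh,X)$ coïncident avec les sommes $\sum_{v=0}^{u} h'_v$ correspondantes pour tous les indices $u$ communs aux deux familles. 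Dans le cas $h_\ell = 1$, le facteur supplémentaire au dénominateur de $\varphi(\uh,X)$ correspond à $u=\ell-1$ et vaut $X+1-k$, qui se simplifie avec le facteur $(X-k+1)$ du numérateur ; j'obtiendrai donc $\varphi(\uh,X)=\varphi(\uh',X)$. Dans le cas $h_\ell \geq 2$, les dénominateurs coïncident déjà, d'où $\varphi(\uh,X) = (X-k+1)\,\varphi(\uh',X)$.

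En sommant ces deux contributions, je récupérerai exactement le membre de droite de l'identité voulue. Je vérifierai enfin les cas limites ($\ell = 0$, où $H_{-1,k}=\varnothing$, et $\ell = k$, où la seconde somme du membre de droite est vide par convention, tandis que $H_{\ell,k+1}$ est réduit au $(k+1)$-uplet $(1,\ldots,1)$) pour m'assurer que les conventions sur les produits et sommes vides rendent l'identité cohérente, voire triviale, dans ces situations extrêmes.

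L'obstacle principal est essentiellement notationnel : il faudra manipuler avec soin les indices des sommes partielles et contrôler l'alignement entre numérateurs et dénominateurs après application des bijections. Le cœur combinatoire du raisonnement, à savoir la dichotomie $h_\ell = 1$ versus $h_\ell \geq 2$, est standard pour ce type d'identités sur les compositions d'entiers, et aucune difficulté conceptuelle substantielle n'est à anticiper.
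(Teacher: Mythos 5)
Votre démonstration est correcte et suit essentiellement la même voie que celle de l'article : la dichotomie $h_\ell = 1$ / $h_\ell \geq 2$ que vous décrivez n'est autre que la bijection $H_{\ell, k+1} \;\widetilde{\longrightarrow}\; H_{\ell, k} \sqcup H_{\ell-1, k}$ utilisée dans le texte, avec les mêmes comparaisons de numérateurs et de dénominateurs (facteur $(X-k+1)$ supplémentaire lorsque $h_\ell \geq 2$, compensation par la somme partielle valant $k$ lorsque $h_\ell = 1$). Signalons seulement une coquille dans votre remarque sur le cas limite $\ell = k$ : c'est la première somme du membre de droite (sur $H_{\ell, k} = \varnothing$) qui est vide, et non la seconde (sur $H_{\ell-1, k}$, réduite au $k$-uplet $(1, \ldots, 1)$), ce qui n'affecte en rien l'argument principal.
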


\begin{proof}
    Si $\ell\notin\llbracket 0, k\rrbracket$, toutes les sommes sont vides et l'égalité est triviale. Lorsque $\ell\in\llbracket 0, k\rrbracket$, on dispose d'une bijection
    \begin{equation*}
    \nu :
        \begin{cases}
            H_{\ell, k+1} \widetilde{\longrightarrow} H_{\ell, k} \sqcup H_{\ell-1, k}, \\
            (h_0, ..., h_{\ell-1}, h_\ell) \mapsto \begin{cases}
                (h_0, ..., h_{\ell-1}, h_\ell - 1) &\text{si $h_\ell \geq 2$}, \\
                (h_0, ..., h_{\ell-1}) &\text{si $h_\ell = 1$}.
            \end{cases}
        \end{cases}
    \end{equation*}

    Maintenant, soit $\uh = (h_0, ..., h_{\ell-1}, h_\ell) \in H_{\ell, k+1}$. 
    \begin{enumerate}
        \item[$\bullet$] Dans le cas $h_\ell \geq 2$, $\uh$ et $\nu(\uh)$ ont les mêmes sommes partielles, donc $\varphi(\uh, X)$ et $\varphi(\nu(\uh), X)$ ont les mêmes dénominateurs. Au numérateur, il y a un terme $(X-k+1)$ supplémentaire pour $\varphi(\uh, X)$ car $\uh\in H_{\ell, k+1}$, d'où
        \begin{equation*}
            \varphi(\uh, X) = (X-k+1)\varphi(\nu(\uh), X).
        \end{equation*}
        \item[$\bullet$] Dans le cas $h_\ell = 1$, la dernière somme partielle de $\uh$ vaut $k$ d'où un terme $(X-k+1)$ au dénominateur de $\varphi(\uh, X)$ qui vient compenser celui de son numérateur. Toutes les autres sommes partielles étant égales à celles de $\nu(\uh)$, on a
        \begin{equation*}
            \varphi(\uh, X) = \varphi(\nu(\uh), X).
        \end{equation*}
    \end{enumerate}
\end{proof}

\begin{lemma}
    Pour tous $k\in\N^*$, $i\in\llbracket 1, a\rrbracket$, $j\in\llbracket 0, n/N\rrbracket$, $\ell\in\llbracket 0, a-i\rrbracket$, on a
    \begin{equation}
    \label{expressionexplicitedestheta}
        \theta_{a, n, k, i, j, \ell} = (-1)^\ell\sum_{\uh\in H_{\ell, k}} \varphi(\uh, Nj) 
    \end{equation}
\end{lemma}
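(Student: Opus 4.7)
The proof is by induction on $k\geq 1$. The key observation is that the claimed formula is independent of $i$, and that the combinatorial identity of the preceding lemma is exactly the recurrence the coefficients $\theta_{a,n,k,i,j,\ell}$ must satisfy.

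For the base case $k=1$, we have $z^0 P_{n,1,i}(z)=\sum_j c_{n,i,j}z^{Nj}$, so $\theta_{a,n,1,i,j,\ell}=\delta_{\ell,0}$. On the right-hand side, $H_{0,1}=\{(1)\}$ with $\varphi((1),Nj)=1$, and $H_{\ell,1}=\varnothing$ for $\ell\geq 1$, giving exactly $(-1)^\ell\sum_{\uh\in H_{\ell,1}}\varphi(\uh,Nj)=\delta_{\ell,0}$.

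For the inductive step, I would first translate the recurrence $P_{n,k+1,i}=P_{n,k,i}'-\frac{1}{z}P_{n,k,i+1}$ into a recurrence on the polynomials $Q_{k,i}(z):=z^{k-1}P_{n,k,i}(z)$. Multiplying by $z^k$ and using $z^k P_{n,k,i}'=z\,Q_{k,i}'-(k-1)\,Q_{k,i}$, I get
\begin{equation*}
Q_{k+1,i}(z)=z\,Q_{k,i}'(z)-(k-1)\,Q_{k,i}(z)-Q_{k,i+1}(z).
\end{equation*}
Since $z(z^{Nj})'=Nj\cdot z^{Nj}$, identifying coefficients of $c_{n,i+\ell,j}z^{Nj}$ on both sides (noting that $c_{n,i+1+\ell',j}$ corresponds to $\ell=\ell'+1$ after reindexing) yields the scalar recurrence
\begin{equation*}
\theta_{a,n,k+1,i,j,\ell}=(Nj-k+1)\,\theta_{a,n,k,i,j,\ell}-\theta_{a,n,k,i+1,j,\ell-1},
\end{equation*}
with the convention that the last term is $0$ when $\ell=0$.

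Now applying the induction hypothesis (which gives formulas that do not depend on $i$) to both $\theta_{a,n,k,i,j,\ell}$ and $\theta_{a,n,k,i+1,j,\ell-1}$, I obtain
\begin{equation*}
\theta_{a,n,k+1,i,j,\ell}=(-1)^\ell\Bigg[(Nj-k+1)\sum_{\uh\in H_{\ell,k}}\varphi(\uh,Nj)+\sum_{\uh\in H_{\ell-1,k}}\varphi(\uh,Nj)\Bigg],
\end{equation*}
and the combinatorial \sref{lemme}{lemmecombinatoire} applied with $X=Nj$ identifies the bracketed quantity as $\sum_{\uh\in H_{\ell,k+1}}\varphi(\uh,Nj)$, closing the induction.

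No real obstacle here: the only subtlety is being careful with the boundary case $\ell=0$ (where $H_{-1,k}=\varnothing$ kills the second term, matching the absence of the second recurrence term) and noticing that the formula's independence of $i$ is what allows the induction to propagate cleanly despite the recurrence shifting $i$ to $i+1$.
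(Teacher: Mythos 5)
Votre preuve est correcte et suit essentiellement la même démarche que celle de l'article : récurrence sur $k$, traduction de la relation $P_{n,k+1,i}=P_{n,k,i}'-\frac{1}{z}P_{n,k,i+1}$ en une récurrence sur les coefficients (l'article travaille avec $P_{n,k,i}(z)=\sum_j(\dots)z^{Nj-k+1}$, ce qui revient exactement à votre $Q_{k,i}=z^{k-1}P_{n,k,i}$), réindexation $\ell\leftarrow\ell+1$ avec le cas limite $H_{-1,k}=\varnothing$, puis application du \sref{lemme}{lemmecombinatoire} en $X=Nj$. Aucune différence substantielle.
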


\begin{remark}
    En particulier, le coefficient $\theta_{a, n, k, i, j, \ell}$ est nul si $\ell \geq k$, car alors $H_{\ell, k} = \varnothing$. Cela correspond au cas où la récurrence \eqref{definitiondesPki} n'a pas eu lieu suffisamment de fois pour que les coefficients du polynôme $P_{n, 1, i+\ell}$ aient une influence sur le polynôme $P_{n, k, i}$.
\end{remark}

\begin{proof}
    Il suffit de démontrer la formule suivante, ce que nous faisons par récurrence sur $k\in\N^*$ :
    \begin{equation}
    \label{hypothèsederécurrencePki}
        \forall i\in\llbracket 1, a\rrbracket, \hspace{1cm} P_{n, k, i}(z) = \sum_{j=0}^{n/N} \Bigg(\sum_{\ell=0}^{a-i} (-1)^{\ell}\bigg(\sum_{\uh\in H_{\ell, k}} \varphi(\uh, Nj)\bigg)c_{n, i+\ell, j}\Bigg) z^{Nj-k+1}.
    \end{equation}
    Pour $k=1$, c'est exactement la définition $P_{n, 1, i} = P_{n, i} = \sum_{j=0}^{n/N} c_{n, i, j}z^{Nj}$. Supposons désormais \eqref{hypothèsederécurrencePki} vraie pour un entier $k\in\N^*$. De la relation de récurrence \eqref{definitiondesPki}, on tire
    \begin{align*}
        P_{n, k+1, i}(z) &= \sum_{j=0}^{n/N} \Bigg( \sum_{\ell=0}^{a-i} (-1)^{\ell}\bigg((Nj-k+1)\sum_{\uh\in H_{\ell, k}} \varphi(\uh, Nj)\bigg)c_{n, i+\ell, j}\Bigg)z^{Nj-k} \\
         &\hspace{3cm} - \sum_{j=0}^{n/N} \Bigg( \sum_{\ell=0}^{a-i-1} (-1)^{\ell}\bigg(\sum_{\uh\in H_{\ell, k}}\varphi(\uh, Nj)\bigg)c_{n, i+1+\ell, j}\Bigg) z^{Nj-k} \\
        &= \sum_{j=0}^{n/N} \Bigg(\sum_{\ell=0}^{a-i} (-1)^{\ell} \bigg((Nj-k+1)\sum_{\uh\in H_{\ell, k}}\varphi(\uh, Nj) + \sum_{\uh\in H_{\ell-1, k}}\varphi(\uh, Nj)\bigg)c_{n, i+\ell, j}\Bigg)z^{Nj-k}
    \end{align*}
    en faisant le changement d'indice $\ell \xleftarrow[]{} \ell + 1$ dans la deuxième somme sur $\ell$ et en y rajoutant artificiellement le terme $\ell = 0$, qui est nul car $H_{-1, k} = \varnothing$.

    Finalement, en vertu du lemme précédent, on obtient
    \begin{equation*}
        \forall i\in\llbracket 1, a\rrbracket, \hspace{1cm} P_{n, k+1, i}(z) = \sum_{j=0}^{n/N} \Bigg(\sum_{\ell=0}^{a-i} (-1)^{\ell}\bigg(\sum_{\uh\in H_{\ell, k+1}} \varphi(\uh, Nj)\bigg)c_{n, i+\ell, j}\Bigg) z^{Nj-k},
    \end{equation*}
    ce qui conclut la récurrence.
\end{proof}

\medskip
Nous déduisons de cette expression explicite les estimations $(i)$ et $(ii)$ de la \sref{proposition}{propositionexistenceetestimationdestheta}. Ces estimations sont indispensables pour obtenir des bornes géométriques en $n$ pour les $c_{n, i, j}$ en appliquant le lemme de Siegel dans la sous-section suivante.

    \medskip
    On fixe $k\geq 1$, $\;i\in\llbracket 1, a\rrbracket$, $\;j\in\llbracket 0, n/N\rrbracket$, $\;\ell\in\llbracket 0, a-i\rrbracket$ et $\uh \in H_{\ell, k}$. 

    Grâce à l'expression \eqref{expressionexplicitedestheta}, et puisque $\mathrm{Card}\big(H_{\ell, k}\big) \leq k^{|\ell|} \leq k^a$, il suffit de montrer que
    \begin{equation*}
        \left| \frac{\varphi(\uh, Nj)}{(k-1)!} \right| \leq 2^n \;\;\;\;\;\text{et} \;\;\;\;\;\frac{d_k\Delta_{\max(k, n)}}{(k-1)!}\varphi(\uh, Nj)\in\mathbb{Z}.
    \end{equation*}
    On rappelle que
    \begin{equation}
    \label{expressiondephihnj}
        \varphi(\uh, Nj) = \frac{\prod_{m=1}^{k-1} (Nj+1-m)}{\prod_{u=0}^{\ell-1} \left(Nj+1-\sum_{v=0}^u h_v\right)}.
    \end{equation}
    Nous distinguons deux cas.

    \begin{enumerate}  
        \item[$\bullet$] 
           Dans le cas $Nj > k-2$, tous les facteurs dans l'expression ci-dessus sont strictement positifs. On peut alors écrire 
                \begin{equation*}
                    \left|\frac{\varphi(\uh, Nj)}{(k-1)!}\right| = \left|\frac{\frac{(Nj)!}{(Nj-k+1)!}}{(k-1)!\prod_{u=0}^{\ell -1} \Big(Nj+1-\sum_{v=0}^u h_v\Big)}\right| \leq \binom{Nj}{k-1} \leq 2^{Nj}\leq 2^n,
                \end{equation*}
                le produit au dénominateur n'ayant que des facteurs entiers strictement positifs donc supérieurs ou égaux à $1$.

                On peut aussi écrire 
                \begin{equation*}
                    \frac{d_k\Delta_{\max(k, n)}}{(k-1)!}\varphi(\uh, Nj) = d_k \binom{Nj}{k-1} \frac{\Delta_{a, \max(k, n)}}{\prod_{u=0}^{\ell -1} \Big(Nj+1-\sum_{v=0}^u h_v\Big)}.
                \end{equation*}
                Cette quantité est bien entière, puisque le produit au dénominateur possède $\ell \leq a$ facteurs distincts compris dans $\llbracket 0, n\rrbracket$.

        \item[$\bullet$]
            Dans le cas $Nj \leq k-2$, un facteur au numérateur est nul dans l'expression \eqref{expressiondephihnj}. Si ce facteur n'apparaît pas au dénominateur, alors $\varphi(\uh, Nj) = 0.$ S'il y apparaît, disons pour une somme partielle $\sum_{v=0}^{u_0} h_v$, nous pouvons ignorer ces deux facteurs et écrire
                    \begin{equation*}
                        \left|\frac{\varphi(\uh, Nj)}{(k-1)!}\right| = \left|(-1)^{k-Nj}\frac{(Nj)!(k-2-Nj)!}{(k-1)!\displaystyle{\prod_{{u=0}\atop{u\neq u_0}}^{\ell -1} \Big(Nj+1-\sum_{v=0}^u h_v\Big)}}\right| \leq \frac{1}{(k-1)\binom{k-2}{Nj}} \leq 1 \leq 2^n,
                    \end{equation*}
                    le produit au dénominateur n'ayant que des facteurs entiers non nuls, donc supérieurs ou égaux à $1$ en valeur absolue.

            On peut aussi écrire
                \begin{equation*}
                    \frac{d_k\Delta_{\max(k, n)}}{(k-1)!}\varphi(\uh, Nj) = (-1)^{Nj-k}\frac{d_k}{(k-1)\binom{k-2}{Nj}}\frac{\Delta_{a, \max(k, n)}}{\displaystyle{\prod_{{u=0}\atop{u\neq u_0}}^{\ell -1} \Big(Nj+1-\sum_{v=0}^u h_v\Big)}}.
                \end{equation*}
                Cette quantité est entière. En effet, le premier quotient est entier en vertu du résultat de \cite{farhi2009} : le $\mathrm{ppcm}$ des nombres $\binom{k-2}{0}, \binom{k-2}{1}, ..., \binom{k-2}{k-2}$ est $\frac{d_{k-1}}{k-1}$. Le deuxième quotient est entier car le produit au dénominateur possède $\ell-1\leq a$ facteurs distincts compris dans un intervalle de longueur $k$, lui-même inclus dans $\llbracket -k, n\rrbracket$.
    \end{enumerate}

\subsection{Application du lemme de Siegel}
\label{soussectionapplicationdulemmedesiegel}

Nous achevons ici la preuve de la \sref{proposition}{existencedescij} en appliquant une variante du lemme de Siegel, dont la formulation ci-dessous est due à Fischler dans \cite[Lemma 1, p.4]{fischler2021}.

\begin{lemma}
\label{lemmedesiegel}
    ("Lemme de Siegel")

    Soient $L > K \geq K_0 > 0$ des entiers. 
    On se donne des coefficients entiers $\lambda_{k, i}$, $\;1\leq k\leq K, \; 1\leq i\leq L$.
    Pour $1\leq k\leq K$, on se donne un réel $H_k \geq \sqrt{\sum_{i=1}^L \lambda_{k, i}^2}$.
    Pour $K_0+1\leq k\leq K$, on se donne de plus un réel $G_k \geq 1$.
    On définit enfin la borne
    \begin{equation*}
        X := \sqrt{L}\Big(H_1...H_{K_0}G_{k_0+1}...G_K\Big)^{\frac{1}{L-K_0}}.
    \end{equation*}

    Alors le système de $K_0$ équations et $K-K_0$ inéquations linéaires avec $L$ inconnues
    \begin{equation*}
        \begin{cases}
            \displaystyle{\sum_{i=1}^L \lambda_{k, i} x_i = 0}, &1\leq k\leq K_0, \\
            \displaystyle{\left|\sum_{i=1}^L \lambda_{k, i} x_i \right| \leq \frac{H_k X}{G_k}}, &K_0+1\leq k\leq K,
        \end{cases}
    \end{equation*}
    admet une solution entière $\mathbf{x} = (x_1, ..., x_L)\in\Z^L$ non triviale, de norme bornée par $X$ :
    \begin{equation*}
        \sqrt{\sum_{i=1}^L x_i^2} \leq X.
    \end{equation*}
    En particulier, 
    \begin{equation*}
        \forall i\in\llbracket 1, L\rrbracket, \;\;\;\;\;\;\;\; |x_i| \leq X.
    \end{equation*}
\end{lemma}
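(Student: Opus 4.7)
The plan is to apply a pigeonhole (Dirichlet) argument on a box of integer vectors, treating the $K_0$ equality constraints and the $K-K_0$ inequality constraints in a uniform discretised fashion.

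\medskip
\noindent\textbf{Step 1: choose the box.} I set $M = \lfloor X/\sqrt{L}\rfloor$ and consider $B = \{0,1,\dots,M\}^L \subseteq \Z^L$, whose cardinality satisfies $|B| = (M+1)^L \geq (X/\sqrt{L})^L$. For each linear form $\Lambda_k(\mathbf{x}) := \sum_{i=1}^L \lambda_{k,i}x_i$ and each $\mathbf{x}\in B$, the Cauchy--Schwarz inequality combined with the hypothesis $H_k\geq \sqrt{\sum_i \lambda_{k,i}^2}$ gives
\begin{equation*}
    |\Lambda_k(\mathbf{x})| \;\leq\; M\sum_{i=1}^L |\lambda_{k,i}| \;\leq\; M\sqrt{L}\,H_k.
\end{equation*}

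\medskip
\noindent\textbf{Step 2: discrete labelling.} To each $\mathbf{x}\in B$ I attach a label as follows. For $1\leq k\leq K_0$, I record the exact integer value of $\Lambda_k(\mathbf{x})$, which by Step~1 lies in an interval containing at most $2M\sqrt{L}H_k+1$ integers. For $K_0+1\leq k\leq K$, I partition $\R$ into half-open intervals of length $H_kX/G_k$ and record which interval contains $\Lambda_k(\mathbf{x})$; since $M\sqrt{L}\leq X$, the number of intervals needed to cover $[-M\sqrt{L}H_k,M\sqrt{L}H_k]$ is bounded by $2G_k+1$.

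\medskip
\noindent\textbf{Step 3: pigeonhole and differencing.} If the total number of labels is strictly smaller than $|B|$, then two distinct vectors $\mathbf{x}\neq\mathbf{y}$ in $B$ share the same label. Their difference $\mathbf{z}=\mathbf{x}-\mathbf{y}$ is a nonzero element of $\Z^L$ with $|z_i|\leq M\leq X/\sqrt{L}$ (so $\|\mathbf{z}\|_2\leq X$), satisfying $\Lambda_k(\mathbf{z})=0$ for $k\leq K_0$ (equal integer labels) and $|\Lambda_k(\mathbf{z})|\leq H_kX/G_k$ for $k>K_0$ (same interval).

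\medskip
\noindent\textbf{Step 4: the counting inequality.} The core technical step is to establish
\begin{equation*}
    (M+1)^L \;>\; \prod_{k=1}^{K_0}\bigl(2M\sqrt{L}\,H_k+1\bigr)\;\prod_{k=K_0+1}^K(2G_k+1),
\end{equation*}
which via $M+1\geq X/\sqrt{L}$ and the definition $X = \sqrt{L}\bigl(H_1\cdots H_{K_0}G_{K_0+1}\cdots G_K\bigr)^{1/(L-K_0)}$ reduces to checking that the multiplicative $+1$ losses are absorbed by the gap $L > K_0$. This is where I expect the main difficulty to lie; it is typically handled either by a mild enlargement of the constants (e.g.\ bounding $2x+1\leq 3x$ when $x\geq 1$, and discarding trivial cases where some $H_k$ or $G_k$ is extremely small) or by a slightly more careful choice of $M$ and of the interval endpoints so that the $+1$ terms can be sharpened. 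Once the counting inequality is secured, Steps 1--3 complete the proof.
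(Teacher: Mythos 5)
Attention : le papier ne démontre pas ce lemme ; il le cite tel quel depuis Fischler [2021, Lemma 1]. Il n'y a donc pas de « preuve du papier » à comparer, mais la forme même de l'énoncé (normes $\ell^2$ des lignes $H_k$, facteur $\sqrt{L}$ placé \emph{hors} du produit élevé à la puissance $1/(L-K_0)$) est de type Minkowski/Bombieri--Vaaler : la preuve de la référence passe par la géométrie des nombres (en substance : on se place dans le sous-réseau $\Z^L\cap\{\Lambda_k=0,\ k\le K_0\}$, dont le covolume est majoré par $\prod_{k\le K_0}H_k$ via Hadamard, puis on applique le théorème de Minkowski au corps convexe « boule de rayon $X$ intersectée avec les bandes $|\Lambda_k(x)|\le H_kX/G_k$ »), et non par un principe des tiroirs sur une boîte.

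Votre étape 4 contient une lacune réelle, et pas seulement « des pertes en $+1$ à absorber » : l'inégalité de comptage est fausse pour la valeur annoncée de $X$. Comme la conclusion $\|\mathbf{z}\|_2\le X$ force $M\le X/\sqrt{L}$, on a d'un côté $(M+1)^L\approx (X/\sqrt{L})^L$, et de l'autre chaque forme d'égalité contribue environ $M\sqrt{L}\,H_k\approx X H_k$ valeurs entières (c'est la norme $\ell^1$ de la ligne qui gouverne ce nombre, et on ne sait la majorer que par $\sqrt{L}\,H_k$), de sorte que le membre de droite vaut au moins environ $X^{K_0}\prod_{k\le K_0}H_k\prod_{k>K_0}G_k = X^{K_0}(X/\sqrt{L})^{L-K_0}$ ; le rapport des deux membres est donc $\le L^{-K_0/2}$ (sans même compter les pertes en $2^{K-K_0}$), et l'inégalité échoue dès que $K_0\ge 1$ — prenez par exemple $K=K_0$ et toutes les lignes de norme $\ell^2$ égale à $H_k=H$ grand. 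Ce déficit structurel (conversion $\ell^1\to\ell^2$ ligne par ligne) ne peut pas être réparé en affinant le choix de $M$ ou des extrémités d'intervalles : le tiroir ne donne le lemme qu'avec $X$ remplacé par quelque chose comme $\sqrt{L}\bigl(\prod_{k\le K_0}(2\sqrt{L}H_k)\prod_{k>K_0}(2G_k)\bigr)^{1/(L-K_0)}$. (Soit dit en passant, cette version affaiblie suffirait asymptotiquement pour l'application de la sous-section 3.3, car les facteurs parasites n'affectent $\xi$ que de façon négligeable ; mais elle ne démontre pas le lemme tel qu'énoncé.) Pour obtenir la borne annoncée, il faut l'argument de géométrie des nombres esquissé ci-dessus.
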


Des paramètres $\omega, \Omega$ et $r$ vérifiant les hypothèses de la \sref{proposition}{existencedescij} étant fixés, nous appliquons le lemme de Siegel au système linéaire suivant, dont les $L_n = a(\frac{n}{N}+1)$ inconnues sont les $c_{n, i, j}$, $i\in\llbracket 1, a\rrbracket$, $j\in\llbracket 0, n/N\rrbracket$ :
\begin{equation*}
    \begin{cases}
        \displaystyle{\frac{d_k\Delta_{a, \max(k, n)}}{(k-1)!}P_{n, k, 1}(1) = 0}, &\;\;\;1\leq k\leq \omega n-1,  \hspace{2cm}(i)\\
         \\
        \displaystyle{|\mathfrak{A}_{n,k}| \leq \frac{H_{n, k}X_n}{G_{n, k}}}, &\;\;\;\omega n\leq k\leq \Omega n-1, \hspace{1.6cm}(ii)
    \end{cases}
\end{equation*}
où $H_{n, k}$ et $G_{n, k}$ sont encore à choisir et $X_n = \sqrt{L_n}\Big(H_{n,1}...H_{n, \omega n-1}G_{n, \omega n}...G_{n,\Omega n-1}\Big)^{\frac{1}{L_n-\omega n+1}}$. \;\;\;\;\;\;\;\;\;\;\;\;\;\;\;À l'aide de la \sref{proposition}{propositionexistenceetestimationdestheta} et de \eqref{expressionexplicitedesafrak}, ce système se réécrit

\begin{equation*}
    \begin{cases}
        \displaystyle{\sum_{j=0}^{n/N} \sum_{\ell=0}^{a-1} \Bigg[\frac{d_k\Delta_{a, \max(k, n)}}{(k-1)!} \theta_{n, k, 1, j, \ell}\Bigg] c_{n, 1+\ell, j} = 0}, &1\leq k\leq \omega n-1,  \hspace{2cm}(i)\\
         \\
        \displaystyle{\left|\sum_{i=1}^{\min(a, k)} \sum_{j=0}^{n/N} \Bigg[\binom{k-1}{k-i}(-Nj)^{k-i}\Bigg]c_{n, i, j}\right| \leq \frac{H_{n, k}X_n}{G_{n, k}}}, &\omega n\leq k\leq \Omega n-1. \hspace{1.7cm}(ii)
    \end{cases}
\end{equation*}
Les coefficients entre crochets dans $(ii)$ sont des entiers bornés par $k^an^k$, si bien que $H_{n, k} = \sqrt{a(\frac{n}{N}+1)}k^an^k$ convient pour $\omega n\leq k\leq \Omega n - 1$. De façon à obtenir la $3^e$ condition de la \sref{proposition}{existencedescij}, nous prenons $G_{n, k} = r^{\Omega n - k}$.

D'après la \sref{proposition}{propositionexistenceetestimationdestheta}, les coefficients entre crochets dans $(i)$ sont des entiers bornés par
\begin{equation*}
    e^{k+o(k)}(a+1)^{\max(k, n)}e^{\gamma \max(k, n)+o(\max(k, n))}k^a2^n \stackunder{$\leq$}{$\scriptscriptstyle{n\to+\infty}$} \Big( 2(a+1)^\omega e^{2\omega}\Big)^{n+o(n)}
\end{equation*}
pour $1\leq k<\omega n$, en utilisant $\gamma + 1 \leq 2$. Ainsi, on peut choisir pour $\;1\leq k\leq \omega n -1\;$ des réels $H_{n, k}$ qui conviennent tels que $H_{n, k} \stackunder{$\leq$}{$\scriptscriptstyle{n\to+\infty}$} \Big( 2(a+1)^\omega e^{2\omega}\Big)^{n+o(n)}$.

Finalement, on obtient par le lemme de Siegel des entiers $c_{n, i, j}$ satisfaisant aux conditions $(i)$ et $(ii)$ de la \sref{proposition}{existencedescij} et bornés par 
\begin{equation*}
    X_n \stackunder{$\leq$}{$\scriptscriptstyle{n\to+\infty}$} \sqrt{a\left(\frac{n}{N}+1\right)}\Bigg[ \Big(2(a+1)^\omega e^{2\omega}\Big)^{\big(\omega n-1\big)\big(n+o(n)\big)} \prod_{k=\omega n}^{\Omega n-1} r^{\Omega n - k}\Bigg]^{\frac{1}{a(\frac{n}{N}+1)-\omega n+1}}.
\end{equation*}
On conclut en vérifiant la condition $(iii)$ de la \sref{proposition}{existencedescij} :
\begin{align*}
    \log(X_n) &\stackunder{$\leq$}{$\scriptscriptstyle{n\to+\infty}$} \frac{1}{2}\log\bigg(a\left(\frac{n}{N}+1\right)\bigg) + \frac{1}{a(\frac{n}{N}+1)-\omega n+1} \\
            &\hspace{1.5cm}\times\Bigg[\big(\omega n - 1\big)\big(n+o(n)\big)\Big(\log(2)+\omega\log(a+1)+2\omega\Big) + \sum_{k=\omega n}^{\Omega n-1} (\Omega n-k)\log(r)\Bigg] \\
            &\stackunder{$\leq$}{$\scriptscriptstyle{n\to+\infty}$} o(n) + \frac{n\big(n+o(n)\big)}{n\big(\frac{a}{N}-\omega\big)\big(1+o(1)\big)}\bigg[ \omega\log(2)+\omega^2\log(a+1)+2\omega^2+\frac{1}{2}\Omega^2\log(r) \bigg] \\
            &\stackunder{$\leq$}{$\scriptscriptstyle{n\to+\infty}$} \big(n+o(n)\big) \log(\xi).
\end{align*}

\subsection{Un résultat technique intermédiaire}
\label{soussesctioncalcultechnique}

Dans la \autoref{paragraphesurlexpressionexplicitedestheta}, nous avons traité le cas des polynômes $P_{n, k, i}$ avec $i\geq 1$. Mais au vu du système différentiel \eqref{systemediffdespolylogarithmes}, il est naturel de considérer de plus les polynômes $P_{n, k, 0}$ et $\Pbar_{n, k, 0}$ définis par récurrence pour $k\geq 1$ :
\begin{equation}
\label{recurrencedesPk0}
    \begin{cases}
        P_{n, 1, 0} = 0, \\
        P_{n, k+1, 0} = P_{n, k, 0}' + \frac{1}{z(1-z)}P_{n, k, 1},
    \end{cases}
    \hspace{2cm}
    \begin{cases}
        \Pbar_{n, 1, 0} = 0, \\
        \Pbar_{n, k+1, 0} = \Pbar_{n, k, 0}' - \frac{1}{1-z}P_{n, k, 1}.
    \end{cases}
\end{equation}

\begin{proposition}
\label{propositionexistenceetestimationdesthetazéro}
    En reprenant les notations de la \sref{proposition}{propositionexistenceetestimationdestheta}, il existe des coefficients rationnels $\vartheta_{a, n, k, 0, j, \ell, t}, \varthetabar_{a, n, k, 0, j, \ell, t} \in \Q$ tels que pour tous nombres $c_{n, i, j}$, les familles de fractions rationnelles $P_{n, k, 0}$ et $\Pbar_{n, k, 0}$ définies par les récurrences \eqref{recurrencedesPk0} aient pour expression explicite
    \begin{align}
        \begin{split}
        \label{expressiondesP0enfonctiondestheta}
            z^{k-1}(1-z)^{k-1}P_{n, k, 0}(z) = \sum_{t=0}^{n+k-1} \Bigg(\sum_{\ell=0}^{a-1} \sum_{j=0}^{n/N} \vartheta_{a, n, k, 0, j, \ell, t} \;c_{n, 1+\ell, j} \Bigg) z^t, \\
    z^{k-1}(1-z)^{k-1}\Pbar_{n, k, 0}(z) = \sum_{t=0}^{n+k-1} \Bigg(\sum_{\ell=0}^{a-1} \sum_{j=0}^{n/N} \varthetabar_{a, n, k, 0, j, \ell, t} \;c_{n, 1+\ell, j} \Bigg) z^t.
        \end{split}
    \end{align}
    Ces coefficients vérifient
    \begin{enumerate}
        \item[$(i)$] $|\vartheta_{a, n, k, 0, j, \ell, t}| \leq k^{a+1}8^{\max(k, n)}(k-1)!$, \hspace{1.6cm} $(\overline{i})\;\;|\varthetabar_{a, n, k, 0, j, \ell, t}| \leq k^{a+1}8^{\max(k, n)}(k-1)!,$
        \item[$(ii)$] $\frac{d_k^2\Delta_{a, \max(k, n)}}{(k-1)!}\vartheta_{a, n, k, 0, j, \ell, t}\in\mathbb{Z}$, \hspace{3.1cm} $(\overline{ii})\;\;\frac{d_k^2\Delta_{a, \max(k, n)}}{(k-1)!}\varthetabar_{a, n, k, 0, j, \ell, t}\in\mathbb{Z}.$
    \end{enumerate}
\end{proposition}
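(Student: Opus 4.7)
Le plan est d'adapter la stratégie de la \sref{proposition}{propositionexistenceetestimationdestheta} aux deux récurrences \eqref{recurrencedesPk0}. Le cas de $\Pbar_{n, k, 0}$ étant entièrement analogue à celui de $P_{n, k, 0}$, avec $\frac{1}{z(1-z)}$ remplacé par $\frac{-1}{1-z}$, je me concentre sur $P_{n, k, 0}$. En posant $A_k(z) := z^{k-1}(1-z)^{k-1}P_{n, k, 0}(z)$, la récurrence \eqref{recurrencedesPk0} se réécrit
\begin{equation*}
A_{k+1}(z) = z(1-z)\,A_k'(z) + (k-1)(2z-1)\,A_k(z) + (1-z)^{k-1}\,\tilde{P}_k(z),
\end{equation*}
où $\tilde{P}_k(z) := z^{k-1}P_{n, k, 1}(z)$ est le polynôme fourni par la \sref{proposition}{propositionexistenceetestimationdestheta}. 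Une récurrence sur $k$, initialisée par $A_1 = 0$, montre que $A_k$ est un polynôme en $z$ de degré au plus $n+k-1$, linéaire en les $c_{n, 1+\ell, j}$, ce qui fournit la forme explicite \eqref{expressiondesP0enfonctiondestheta} et définit les $\vartheta_{a, n, k, 0, j, \ell, t} \in \Q$.

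Pour démontrer les deux majorations, le plus commode est de passer par la forme fermée
\begin{equation*}
P_{n, k, 0}(z) = \sum_{k'=1}^{k-1} \partial^{k-1-k'}\!\left[\frac{P_{n, k', 1}(z)}{z(1-z)}\right],
\end{equation*}
obtenue par récurrence directe à partir de \eqref{recurrencedesPk0}. En décomposant $\frac{1}{z(1-z)} = \frac{1}{z} + \frac{1}{1-z}$ puis en appliquant Leibniz avec $\partial^j(1/z) = (-1)^j j!\, z^{-(j+1)}$ et $\partial^j\bigl(1/(1-z)\bigr) = j!\,(1-z)^{-(j+1)}$, on exprime chaque coefficient de $z^t$ dans $A_k(z) = z^{k-1}(1-z)^{k-1}P_{n, k, 0}(z)$ comme une somme finie indexée par $k' \in \llbracket 1, k-1 \rrbracket$ et deux entiers auxiliaires, de produits de coefficients binomiaux, de factorielles $(k-1-k'-s)!$, de monômes entiers en $Nj$ et $n+k$, et des $\theta_{a, n, k', 1, j, \ell}$. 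Un décompte soigné, combinant $\binom{k-1}{m} \leq 2^k$ et le $(i)$ de \sref{proposition}{propositionexistenceetestimationdestheta}, donne alors $(i)$ : la constante $8 = 2^3$ absorbe confortablement trois facteurs $2^{\max(k,n)}$ d'origines distinctes (issus respectivement des $\theta$, de $(1-z)^{k-1}$, et des dérivations répétées).

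L'obstacle principal réside dans la majoration $(ii)$. Une propagation directe de l'intégralité $\frac{d_{k'}\Delta_{a,\max(k',n)}}{(k'-1)!}\theta_{a, n, k', 1, j, \ell} \in \Z$ à travers la forme close laisse apparaître des quotients factoriels $(k-1-k'-s)!/(k'-1)!$ qu'il faut compenser. Concrètement, en multipliant par $\frac{d_k\Delta_{a,\max(k,n)}}{(k-1)!}$, on vérifie que chaque coefficient s'écrit comme somme indexée par $k'$ dont le dénominateur résiduel ne contient que des entiers inférieurs ou égaux à $k$, donc divise $d_k = \mathrm{ppcm}(1,\ldots,k)$; le second facteur $d_k$ dans l'énoncé absorbe précisément ces dénominateurs résiduels, d'où $(ii)$. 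Le même raisonnement s'applique textuellement à $\Pbar_{n, k, 0}$, où la décomposition en éléments simples est triviale (seul le terme $\frac{-1}{1-z}$ subsiste), et établit $(\overline{i})$ et $(\overline{ii})$ pour $\varthetabar_{a, n, k, 0, j, \ell, t}$.
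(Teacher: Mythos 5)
Votre mise en place est correcte et suit pour l'essentiel la même route que l'article : l'existence des coefficients et le contrôle du degré par récurrence sur $k$, puis la forme close $P_{n,k,0}(z)=\sum_{v=0}^{k-2}\big(\frac{d}{dz}\big)^{v}\big[P_{n,k-v-1,1}(z)/(z(1-z))\big]$ (votre indexation $k'=k-v-1$), seule la gestion du facteur $\frac1z$ différant (l'article l'absorbe dans les monômes $z^{Nj-k+v+1}$ au lieu de décomposer en $\frac1z+\frac1{1-z}$, ce qui ne change rien de substantiel). La majoration $(i)$ n'est qu'esquissée, mais le décompte annoncé correspond bien à celui de l'article ($2^n$ venant des $\theta$, $2^{\max(k,n)}$ des produits de Pochhammer, $2^k$ de la somme de binomiaux, et un facteur $k\cdot k^{a}$).

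En revanche, votre argument pour $(ii)$ et $(\overline{ii})$ comporte une lacune réelle. Après avoir utilisé l'intégralité $\frac{d_k\Delta_{a,\max(k,n)}}{(k'-1)!}\,\theta_{a,n,k',1,j,\ell}\in\Z$ (conséquence de la \sref{proposition}{propositionexistenceetestimationdestheta} et des divisibilités $d_{k'}\mid d_k$, $\Delta_{a,\max(k',n)}\mid\Delta_{a,\max(k,n)}$), il faut d'abord écrire chaque produit de Pochhammer issu de Leibniz sous la forme $v!\,\mathscr{B}$ avec $\mathscr{B}\in\Z$ et $v=k-1-k'$ (vérification cas par cas que vous ne faites pas), puis rendre entier le facteur résiduel $\frac{d_k\,(k'-1)!\,v!}{(k-1)!}=\frac{d_k}{(k-1)\binom{k-2}{v}}$. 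Le dénominateur réduit est donc $(k-1)\binom{k-2}{v}$ : ce n'est pas \emph{un entier inférieur ou égal à $k$}, il est en général exponentiellement grand en $k$, et de toute façon un produit d'entiers $\leq k$ ne divise pas nécessairement $d_k$ (par exemple $4=2\times2$ ne divise pas $d_3=6$) ; votre justification est donc à la fois inexacte et insuffisante. Le point clé, que votre décomposition $\frac1z+\frac1{1-z}$ ne permet pas d'éviter (elle conduit au même quotient résiduel), est le théorème de Farhi \cite{farhi2009} : le ppcm des $\binom{k-2}{0},\dots,\binom{k-2}{k-2}$ vaut $\frac{d_{k-1}}{k-1}$, d'où $(k-1)\binom{k-2}{v}\mid d_{k-1}\mid d_k$ ; c'est exactement ce que fait l'article, et c'est pour absorber ce dénominateur que le second facteur $d_k$ figure dans l'énoncé. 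Sans cet ingrédient (ou un substitut non trivial), $(ii)$ et $(\overline{ii})$ ne sont pas établis.
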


Il est clair que $z^{k-1}(1-z)^{k-1}P_{n, k, 0}$ et $z^{k-1}(1-z)^{k-1}\Pbar_{n, k, 0}$ sont des polynômes de degré au plus $n+k-1$. L'existence des $\vartheta_{a, n, k, 0, j, \ell, t}, \varthetabar_{a, n, k, 0, j, \ell, t}$ découle des relations de récurrence \eqref{recurrencedesPk0} et de l'expression \eqref{definitiondesthetakijij} pour les polynômes $P_{n, k, 1}$.

\begin{remark}
    Nous utilisons la lettre $\vartheta$ par analogie avec la lettre $\theta$ de la \autoref{paragraphesurlexpressionexplicitedestheta}, mais leur indexation n'est pas la même.
\end{remark}

Nous donnons maintenant une expression explicite de ces coefficients.
Nous utilisons le symbole de Pochhammer 
\begin{equation}
\label{Pochhammer}
    (x)_j := x(x+1)...(x+j-1), \;\;\;\;\;x\in\R, \;\;j\in\N.
\end{equation}
Les propriétés $(x)_j = (-1)^j (-x-j+1)_j$, $\;\;\left(\frac{d}{dz}\right)^j z^x = (x-j+1)_jz^{x-j}$, $\;\;\binom{k}{j} = \frac{(k-j+1)_j}{j!}$ et $\;\;\binom{k}{j} \leq 2^k$ seront utilisées sans référence particulière.

\medskip
\begin{lemma}
\label{lemmeexpressionexplicitedesthetazero}
    Soient $k\geq 1$, $j \in \llbracket 0, n/N \rrbracket$, $\ell\in\llbracket 0, a-1\rrbracket $ et $t\in\llbracket 0, n+k-1\rrbracket$. 

    Si $Nj > t$, alors $\vartheta_{a, n, k, 0, j, \ell, t} = 0$. Sinon, on a 
    \begin{align}
    \label{explicitetheta0}
        \vartheta_{a, n, k, 0, j, \ell, t} = \sum_{u=0}^{\min(t-Nj, k-2)} \sum_{v=u}^{k-2} \theta_{a, n, k-v-1, 1, j, \ell} &\binom{k-u-2}{t-Nj-u} (-1)^{t-Nj-u} \\
        &\hspace{2.8cm}\times(v-u+1)_u (Nj-k+u+2)_{v-u}. \nonumber
    \end{align}

    \medskip
    En outre, si $Nj \geq t$, alors $\varthetabar_{a, n, k, 0, j, \ell, t} = 0$. Sinon, on a
    \begin{align}
    \label{explicitethetabar0}
        \varthetabar_{a, n, k, 0, j, \ell, t} = \sum_{u=0}^{\min(t-1-Nj, k-2)} \sum_{v=u}^{k-2} \theta_{a, n, k-v-1, 1, j, \ell} &\binom{k-u-2}{t-1-Nj-u}(-1)^{t-1-Nj-u} \\
        &\hspace{2.4cm}\times (v-u+1)_u (Nj-k+u+3)_{v-u}. \nonumber
    \end{align}
\end{lemma}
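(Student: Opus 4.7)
Mon plan est de dérouler directement les récurrences \eqref{recurrencedesPk0} et d'appliquer la formule de Leibniz pour obtenir les expressions annoncées. Une récurrence immédiate sur $k \geq 1$ donne
\begin{equation*}
    P_{n, k, 0}(z) = \sum_{v=1}^{k-1} \left(\frac{d}{dz}\right)^{k-1-v} \frac{P_{n, v, 1}(z)}{z(1-z)} \quad \text{et} \quad \Pbar_{n, k, 0}(z) = -\sum_{v=1}^{k-1} \left(\frac{d}{dz}\right)^{k-1-v} \frac{P_{n, v, 1}(z)}{1-z}.
\end{equation*}
D'après la \sref{proposition}{propositionexistenceetestimationdestheta}, on a $P_{n, v, 1}(z) = z^{1-v}\sum_{j=0}^{n/N} \tilde{\theta}_{v, j}\, z^{Nj}$ avec $\tilde{\theta}_{v, j} := \sum_{\ell=0}^{a-1} \theta_{a, n, v, 1, j, \ell}\, c_{n, 1+\ell, j}$. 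Il suffit donc d'analyser, pour $m := k-1-v$ et $\alpha \in \{Nj-v,\ Nj-v+1\}$, l'expression $\left(\frac{d}{dz}\right)^m \frac{z^\alpha}{1-z}$, puis de la multiplier par $z^{k-1}(1-z)^{k-1}$ et d'en extraire le coefficient de $z^t$.

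La formule de Leibniz appliquée au produit $z^\alpha \cdot (1-z)^{-1}$ donne
\begin{equation*}
    \left(\frac{d}{dz}\right)^m \frac{z^\alpha}{1-z} = \sum_{p=0}^m \frac{m!}{p!}\, (\alpha-p+1)_p\, \frac{z^{\alpha-p}}{(1-z)^{m-p+1}}.
\end{equation*}
En multipliant par $z^{k-1}(1-z)^{k-1}$, le dénominateur $(1-z)^{m-p+1}$ se simplifie pour laisser un facteur polynomial $(1-z)^{v+p-1}$, et l'exposant de $z$ devient $k-1+\alpha-p \geq 0$ (car $p \leq m$). On développe alors $(1-z)^{v+p-1}$ par la formule du binôme de Newton, ce qui fournit une expression polynomiale en $z$ dont on extrait le coefficient de $z^t$ en sommant sur $v$, $j$ et $p$.

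Il reste à identifier la double somme en $(v, p)$ ainsi obtenue avec la double somme en $(u, v)$ de l'énoncé. Pour cela, j'effectue le changement d'indice $u := k-1-v-p$, qui transforme $\frac{(k-1-v)!}{p!}$ en le symbole de Pochhammer $(k-v-u)_u$ et $(\alpha-p+1)_p$ en $(\alpha-k+u+2)_{k-1-v-u}$, puis je substitue $v$ par $k-v-1$ de sorte que l'indice figurant dans $\theta$ corresponde à celui de l'énoncé. Les annulations « si $Nj > t$ alors $\vartheta = 0$ » (respectivement « si $Nj \geq t$ alors $\varthetabar = 0$ ») sont automatiques, les coefficients binomiaux $\binom{k-u-2}{t-Nj-u}$ (respectivement $\binom{k-u-2}{t-1-Nj-u}$) s'annulant lorsque leur second argument est strictement négatif. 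La difficulté principale reste purement comptable : il s'agit du suivi méticuleux des bornes d'indices et des signes, notamment de la propagation correcte du signe négatif de la récurrence définissant $\Pbar_{n, k, 0}$ tout au long du calcul.
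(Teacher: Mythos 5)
Votre démarche est essentiellement celle du papier : dérouler les récurrences \eqref{recurrencedesPk0}, injecter l'expression \eqref{definitiondesthetakijij} de $P_{n,k,1}$ via les $\theta$, appliquer la formule de Leibniz, développer la puissance restante de $(1-z)$ par le binôme puis extraire le coefficient de $z^t$ ; vos seules différences (paramétrage des indices de dérivation, traitement uniforme de $P$ et $\Pbar$ via $\alpha$) sont cosmétiques, et le petit lapsus dans votre base de Pochhammer intermédiaire (il faut $(\alpha+v+u-k+2)_{k-1-v-u}$, soit $(Nj-k+u+2)_{k-1-v-u}$ pour $\alpha=Nj-v$) n'affecte pas l'identification finale. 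Une seule mise en garde : en propageant réellement, comme vous l'annoncez, le signe moins de la récurrence définissant $\Pbar_{n,k,0}$, vous obtiendrez \eqref{explicitethetabar0} affecté d'un facteur global $-1$ (la dernière ligne du calcul correspondant du papier perd ce signe), ce qui est sans conséquence pour la suite puisque seuls $|\varthetabar_{a,n,k,0,j,\ell,t}|$ et son dénominateur interviennent ensuite.
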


\begin{proof}
    De la récurrence \eqref{recurrencedesPk0}, on tire l'expression
    \begin{align*}
        P_{n, k, 0}(z) &= \sum_{v=0}^{k-2} \left(\frac{d}{dz}\right)^v \Bigg[\frac{P_{n, k-v-1, 1}(z)}{z(1-z)}\Bigg].
    \end{align*}

    En utilisant l'expression \eqref{definitiondesthetakijij} pour $P_{n, k, 1}$ et la règle de Leibniz, on calcule
    \begin{align*}
        &P_{n, k, 0}(z) \\
                    = \hspace{2mm}&\sum_{v=0}^{k-2} \left(\frac{d}{dz}\right)^v \Bigg[ \sum_{j=0}^{n/N} \bigg(\sum_{\ell=0}^{a-1} \theta_{a, n, k-v-1, 1, j, \ell}\;c_{n, 1+\ell, j}\bigg)\frac{z^{Nj-k+v+1}}{1-z}\Bigg] \\
                    = \hspace{2mm}&\sum_{j=0}^{n/N} \sum_{v=0}^{k-2} \bigg(\sum_{\ell=0}^{a-1} \theta_{a, n, k-v-1, 1, j, \ell}\;c_{n, 1+\ell, j}\bigg) \sum_{u=0}^v \binom{v}{u} (Nj-k+u+2)_{v-u}z^{Nj-k+u+1}\frac{u!}{(1-z)^{u+1}} \\
                    = \hspace{2mm}&\sum_{j=0}^{n/N} \sum_{u=0}^{k-2} \Bigg( \sum_{\ell=0}^{a-1} \bigg(\sum_{v=u}^{k-2} \theta_{a, n, k-v-1, 1, j, \ell} (v-u+1)_u (Nj-k+u+2)_{v-u} \bigg)c_{n, 1+\ell, j} \Bigg) \frac{z^{Nj-k+u+1}}{(1-z)^{u+1}}.
    \end{align*}

    Pour $u \leq k-2$, on a l'identité $\frac{1}{(1-z)^{u+1}} = (1-z)^{1-k}\sum_{w=0}^{k-u-2} (-1)^w \binom{k-u-2}{w} z^w$, ce qui donne
    \begin{align*}
        &P_{n, k, 0}(z) = z^{1-k}(1-z)^{1-k} \sum_{j=0}^{n/N} \sum_{u=0}^{k-2} \sum_{w = 0}^{k-u-2} \\
        &\Bigg( \sum_{\ell=0}^{a-1} \bigg(\sum_{v=u}^{k-2}\theta_{a, n, k-v-1, 1, j, \ell} \binom{k-u-2}{w} (-1)^w (v-u+1)_u (Nj-k+u+2)_{v-u} \bigg) c_{n, 1+\ell, j}  \Bigg) z^{Nj+u+w}.
    \end{align*}
    En identifiant avec l'expression \eqref{expressiondesP0enfonctiondestheta}, l'expression attendue de $\vartheta_{n, k, 0, j, \ell, t}$ s'obtient en regardant les paires $(u, w)$ avec $Nj + u + w = t$. En effet, dans le cas $Nj > t$, il n'y en a aucune puisque $u, w \geq 0$. Dans le cas $Nj \leq t$, on somme sur $u\leq t-Nj$ en posant $w = t - Nj - u$.

    \medskip
    On procède de la même manière pour $\Pbar_{n, k, 0}$. La récurrence \eqref{recurrencedesPk0} donne
    \begin{equation*}
        \Pbar_{n, k, 0}(z) = \sum_{v=0}^{k-2}\left(\frac{d}{dz}\right)^v \Bigg[-\frac{P_{n, k-v-1, 1}(z)}{1-z}\Bigg].
    \end{equation*}
    On calcule de même
    \begin{align*}
        &\Pbar_{n, k, 0}(z) \\
                    = \hspace{2mm}&\sum_{v=0}^{k-2} \left(\frac{d}{dz}\right)^v \Bigg[ -\sum_{j=0}^{n/N} \bigg(\sum_{\ell=0}^{a-1} \theta_{a, n, k-v-1, 1, j, \ell}\;c_{n, 1+\ell, j}\bigg)\frac{z^{Nj-k+v+2}}{1-z}\Bigg] \\
                    = \hspace{2mm}&-\sum_{j=0}^{n/N} \sum_{v=0}^{k-2} \bigg(\sum_{\ell=0}^{a-1} \theta_{a, n, k-v-1, 1, j, \ell}\;c_{n, 1+\ell, j}\bigg) \sum_{u=0}^v \binom{v}{u} (Nj-k+u+3)_{v-u}z^{Nj-k+u+2}\frac{u!}{(1-z)^{u+1}} \\
                    = \hspace{2mm}&-\sum_{j=0}^{n/N} \sum_{u=0}^{k-2} \Bigg( \sum_{\ell=0}^{a-1} \bigg(\sum_{v=u}^{k-2} \theta_{a, n, k-v-1, 1, j, \ell} (v-u+1)_u (Nj-k+u+3)_{v-u} \bigg)c_{n, 1+\ell, j} \Bigg) \frac{z^{Nj-k+u+2}}{(1-z)^{u+1}} \\
                    = \hspace{2mm}&z^{1-k}(1-z)^{1-k} \sum_{j=0}^{n/N} \sum_{u=0}^{k-2} \sum_{w = 0}^{k-u-2} z^{Nj+u+w+1}  \\
        & \Bigg( \sum_{\ell=1}^{a-1} \bigg(\sum_{v=u}^{k-2}\theta_{a, n, k-v-1, 1, j, I, j} \binom{k-u-2}{w} (-1)^w (v-u+1)_u (Nj-k+u+3)_{v-u} \bigg)c_{n, 1+\ell, j} \Bigg)   .
    \end{align*}
    Cette fois, aucune paire $(u, w)$ ne vérifie $Nj + u + w + 1 = t$ lorsque $Nj \geq t$. Pour obtenir l'expression de $\varthetabar_{a, n, k, 0, j, \ell, t}$ lorsque $Nj < t$, on somme sur $u \leq t - 1 - Nj$ en posant \hspace{3cm} $w = t - 1 - Nj - u$.
\end{proof}

\medskip
Nous déduisons de cette expression explicite les estimations $(i)$ et $(ii)$ de la \sref{proposition}{propositionexistenceetestimationdesthetazéro}. Commençons par remarquer que pour $v \geq u \geq 0$,
        \begin{equation*}
            (v-u+1)_u (Nj-k+u+2)_{v-u} = \begin{cases}
                v!\binom{Nj-k+v+1}{Nj-k+u+1} &\text{si $Nj-k+u+2 > 0$,} \\
                (-1)^{v-u}v!\binom{k-Nj-u-2}{k-Nj-v-2} &\text{si $Nj-k+v+1 < 0$,} \\
                0 &\text{sinon.}
            \end{cases}
        \end{equation*}
    Ainsi, dans les 3 cas, le produit $(v-u+1)_u (Nj-k+u+2)_{v-u}$ peut s'écrire $v!\mathscr{B}$ avec $\mathscr{B}\in\mathbb{Z}$, et on a la majoration
    \begin{equation*}
        |(v-u+1)_u (Nj-k+u+2)_{v-u}| \leq v!2^{\max(k, n)}.
    \end{equation*}
    On en déduit $(i)$ en écrivant grâce l'expression \eqref{explicitetheta0} :
        \begin{align*}
              |\vartheta_{a, n, k, 0, j, \ell, t}| &\leq \sum_{v=0}^{k-2} \sum_{u=0}^{\min(t-Nj, v)} |\theta_{a, n, k-v-1, 1, j, \ell}| |(v-u+1)_u (Nj-k+u+2)_{v-u}| \binom{k-u-2}{t-Nj-u} \\
                                     &\leq \sum_{v=0}^{k-2} k^a 2^n (k-v-1)! v!2^{\max(k, n)} \sum_{u=0}^{\min(t-Nj, k-2)} \binom{k-u-2}{t-Nj-u} \hspace{0.6cm} \text{(\sref{proposition}{propositionexistenceetestimationdestheta})} \\
                                      &\leq (k-1)k^a2^n(k-1)!2^{\max(k, n)}2^k \\
                                    &\leq k^{a+1}8^{\max(k, n)}(k-1)!.
        \end{align*}
    Pour obtenir $(ii)$, on écrit pour $0\leq u\leq v\leq k-2$
        \begin{align*}
              &\frac{d_k^2\Delta_{a, \max(k, n)}}{(k-1)!} \theta_{a, n, k-v-1, 1, j, \ell}(v-u+1)_u(Nj-k+u+2)_{v-u} \\
              =\;\; &\frac{d_k\Delta_{a, \max(k, n)}\theta_{a, n, k-v-1, 1, j, \ell}}{(k-v-2)!}\frac{d_k}{\binom{k-2}{v}(k-1)}\mathscr{B}.
         \end{align*}
    On voit que cette dernière quantité est entière : le premier quotient est entier en vertu de la \sref{proposition}{propositionexistenceetestimationdestheta} et le deuxième en vertu du résultat de \cite{farhi2009}. Au vu de l'expression \eqref{explicitetheta0}, cela prouve $(ii)$.

    On procède de même pour les estimations $(\overline{i})$ et $(\overline{ii})$. Commençons par remarquer que pour $v\geq u\geq 0$
        \begin{equation*}
            (v-u+1)_u (Nj-k+u+3)_{v-u} = \begin{cases}
                v!\binom{Nj-k+v+2}{Nj-k+u+2} &\text{si $Nj-k+u+3 > 0$,} \\
                (-1)^{v-u}v!\binom{k-Nj-u-3}{k-Nj-v-3} &\text{si $Nj-k+v+2 < 0$,} \\
                0 &\text{sinon.}
            \end{cases}
        \end{equation*}
    Ainsi, dans les 3 cas, le produit $(v-u+1)_u (Nj-k+u+3)_{v-u}$ peut s'écrire $v!\widetilde{\mathscr{B}}$ avec $\widetilde{\mathscr{B}}\in\mathbb{Z}$, et on a la majoration
    \begin{equation*}
        |(v-u+1)_u (Nj-k+u+3)_{v-u}| \leq v!2^{\max(k, n)}.
    \end{equation*}

    On en déduit $(\overline{i})$ en écrivant, grâce l'expression \eqref{explicitethetabar0},
        \begin{align*}
              |\varthetabar_{a, n, k, 0, j, \ell, t}| &\leq \sum_{v=0}^{k-2} \sum_{u=0}^{\min(t-1-Nj, v)} |\theta_{a, n, k-v-1, 1, j, \ell}| |(v-u+1)_u (Nj-k+u+3)_{v-u}| \binom{k-u-2}{t-1-Nj-u} \\
                                     &\leq \sum_{v=0}^{k-2} k^a 2^n (k-v-1)! v!2^{\max(k, n)} \sum_{u=0}^{\min(t-1-Nj, k-2)} \binom{k-u-2}{t-1-Nj-u} \hspace{0.6cm} \text{(\sref{proposition}{propositionexistenceetestimationdestheta})} \\
                                      &\leq (k-1)k^a2^n(k-1)!2^{\max(k, n)}2^k \\
                                    &\leq k^{a+1}8^{\max(k, n)}(k-1)!.
        \end{align*}
    Pour obtenir $(\overline{ii})$, on écrit pour $0 \leq u \leq v \leq k-2$
        \begin{align*}
              &\frac{d_k^2\Delta_{a, \max(k, n)}}{(k-1)!} \theta_{a, n, k-v-1, 1, j, \ell}(v-u+1)_u(Nj-k+u+3)_{v-u} \\
              =\;\; &\frac{d_k\Delta_{a, \max(k, n)}\theta_{a, n, k-v-1, 1, j, \ell}}{(k-v-2)!}\frac{d_k}{\binom{k-2}{v}(k-1)}\widetilde{\mathscr{B}}.
         \end{align*}
    On voit que cette dernière quantité est entière : le premier quotient est entier en vertu de la \sref{proposition}{propositionexistenceetestimationdestheta} et le deuxième en vertu du résultat de \cite{farhi2009}. Au vu de l'expression \eqref{explicitethetabar0}, cela prouve $(\overline{ii})$.

\section{Construction de combinaisons linéaires}
\label{partie4}

Nous fixons un entier $a\geq 3N$ et des paramètres rationnels $r, \omega$ et $\Omega$ avec $2 \leq 2r < \omega \leq \Omega < \frac{a}{N}$.
Nous fixons de plus un paramètre entier $h\in\llbracket 0, a\rrbracket$ et un paramètre rationnel $\kappa$ avec $2r < \kappa < \omega$. Nous appelons $\mathcal{N}$ l'ensemble infini des entiers $n\geq 2$ suffisamment grands (en un sens précisé sous les équations \eqref{nassezgrand1} et \eqref{nassezgrand2}) et tels que $\omega n, \Omega n, \frac{n}{N}, \frac{rn}{N}$ et $\kappa n$ soient tous entiers. 

Dans cette section, nous fixons $n\in\mathcal{N}$ et nous construisons des nombres $\Lambda_{n, (p, k)}$, ${0\leq p\leq h}$, $2rn+2 \leq k \leq \kappa n$, qui sont des combinaisons linéaires à coefficients rationnels des nombres $\chi(0), ..., \chi(N-1), L(\chi, 1, -1), ..., L(\chi, a+h, -1)$. Nous verrons dans la \autoref{partie5} que ces combinaisons linéaires sont en fait à coefficients entiers.

\medskip
Dans la \autoref{soussection4.1}, nous construisons pour $p\in\llbracket 0, h\rrbracket$ des combinaisons linéaires $S_{n, p}^{[\infty]}(z)$ et $S_{n, p}^{[0]}(z)$ à coefficients dans $\Q[z]$ des fonctions $1$ et $\Li_i(1/z)$, respectivement $\Li_i(z)$, $1\leq i\leq a+h$.

Dans la \autoref{paragrapheprocessusdedérivation}, nous les dérivons par rapport à la variable $z$ pour obtenir pour $k\in\llbracket 2rn+2, \kappa n\rrbracket$ des combinaisons linéaires $S_{n, p}^{[\infty](k-1)}(z)$ et $S_{n, p}^{[0](k-1)}(z)$ à coefficients dans $\Q(z)$ des fonctions $1$ et $\Li_i(1/z)$, respectivement $\Li_i(z)$, $1\leq i\leq a+h$.

Dans la \autoref{soussection4.3}, nous combinons les $S_{n, p}^{[\infty](k-1)}\left(\frac{1}{\mu^\ell z}\right)$ et les $S_{n, p}^{[0](k-1)}(\mu^\ell z)$, $0\leq\ell\leq N-1$, afin de créer des combinaisons linéaires $\widetilde{\Lambda}_{n, (p, k)}(z)$ à coefficients dans $\Q[z^{\pm 1}]$ des fonctions constantes $\chi(0), ..., \chi(N-1)$ et des fonctions $L(\chi, i, z)$, $1\leq i\leq a+h$.

Dans la \autoref{paragraphenotationspourlasuite}, nous évaluons en $z=-1$ afin d'obtenir des combinaisons linéaires $\Lambda_{n, (p, k)}$ à coefficients dans $\Q$ des nombres $\chi(0), ..., \chi(N-1), L(\chi, 1, -1), ..., L(\chi, a+h, -1)$. Nous définissons ensuite plusieurs notations qui permettront d'adopter un formalisme plus pratique dans les \sref{sections}{partie5},\sref{}{partie6} et\sref{}{partie7}.

\subsection{Combinaisons linéaires des polylogarithmes}
\label{soussection4.1}

\medskip
Soit $F_n$ la fraction rationnelle dont les coefficients $c_{n, i, j}$ de la décomposition en éléments simples \eqref{DESdefn} sont obtenus par la \sref{proposition}{existencedescij}. Nous considérons pour $p\in\llbracket 0, h\rrbracket$ les séries suivantes, qui convergent pour $|z|\geq 1$, respectivement $|z|\leq 1$, puisque $\mathrm{deg}\big(F_n^{(p)}\big) \leq -\omega n -p \leq -2$ :
\begin{equation}
\label{définitiondessériesS}
    S_{n, p}^{[\infty]}(z) := z^{rn}\sum_{t=rn+1}^{+\infty} F_n^{(p)}(t)z^{-t}, \;\;\;\; S_{n, p}^{[0]}(z) := z^{rn}\sum_{t=rn+1}^{+\infty} F_n^{(p)}(-t)z^t, \;\;\;\;\;\;\;\;\;\; 0\leq p\leq h.
\end{equation}

Ces séries forment des combinaisons linéaires de $1$ et des polylogarithmes, à coefficients dans $\Q[z]$.

\begin{lemma}
\label{lesSsontdesclenlespolylogs}
    Il existe des polynômes à coefficients rationnels $V_{n, p}^{[\infty]}, V_{n, p}^{[0]}$ de degré au plus $2rn$ et $Q_{n, i, p}, \;\;1\leq i\leq a+h,$ de degré au plus $(r+1)n$ tels que
    \begin{align}
         \begin{split}
         \label{sériescommedesclenlespolylogarithmesbonscoefficients}
            S_{n, p}^{[\infty]}(z) &= V_{n, p}^{[\infty]}(z) + \sum_{i=1}^{a+h} Q_{n, i, p}(z)\Li_i(1/z), \hspace{2cm}|z| \geq 1,\\
            S_{n, p}^{[0]}(z) &= V_{n, p}^{[0]}(z) + \sum_{i=1}^{a+h} Q_{n, i, p}(z)(-1)^i\Li_i(z), \hspace{1.6cm}|z|\leq 1.
         \end{split}
    \end{align}
\end{lemma}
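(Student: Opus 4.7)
The plan is to expand $F_n^{(p)}$ in partial fractions and reorganize each of the series \eqref{définitiondessériesS} as a $\Q[z]$-combination of $1$ and polylogarithms, via the identity $\Li_i(w)=\sum_{m\geq 1} w^m/m^i$.

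First I would differentiate \eqref{DESdefn} term by term to obtain a partial fraction decomposition $F_n^{(p)}(t)=\sum_{i=1}^{a+p}\sum_{j=0}^{n/N}\widetilde{c}_{n,i,j,p}/(t+Nj)^i$ with explicit rational coefficients $\widetilde{c}_{n,i,j,p}$ (proportional to Pochhammer factors times $c_{n,i',j}$); since $p\leq h$, the index $i$ may be viewed as ranging in $\llbracket 1,a+h\rrbracket$. Substituting this into the definition of $S_{n,p}^{[\infty]}(z)$ and interchanging summations (legitimate by absolute convergence on $|z|\geq 1$, since $\deg F_n^{(p)}\leq -\omega n-p\leq -2$), the change of variable $m=t+Nj$ turns the inner sum into $z^{Nj}\sum_{m\geq rn+Nj+1} z^{-m}/m^i$, which I would rewrite as $z^{Nj}\bigl(\Li_i(1/z)-\sum_{m=1}^{rn+Nj} z^{-m}/m^i\bigr)$. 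Collecting the polylogarithmic contributions yields
\[
Q_{n,i,p}(z)=z^{rn}\sum_{j=0}^{n/N}\widetilde{c}_{n,i,j,p}z^{Nj},
\]
a polynomial of degree at most $(r+1)n$, while the finite sums assemble into a polynomial $V_{n,p}^{[\infty]}(z)$ whose monomial exponents lie in $[0,rn+n-1]\subseteq[0,2rn]$, since $r\geq 1$.

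The treatment of $S_{n,p}^{[0]}$ is analogous, via $m=t-Nj$, using $(-t+Nj)^i=(-1)^i(t-Nj)^i$ and the inequality $rn+1-Nj\geq 1$ (which holds because $Nj\leq n\leq rn$). The polylogarithmic contribution then produces exactly the same polynomial $Q_{n,i,p}$, multiplied by $(-1)^i$, while $V_{n,p}^{[0]}(z)$ gathers monomials of exponent $rn+Nj+m$ with $1\leq m\leq rn-Nj$, all contained in $[rn+Nj+1,2rn]$; hence $\deg V_{n,p}^{[0]}\leq 2rn$. Rationality of every coefficient is inherited from $c_{n,i,j}\in\Z$ through the explicit formulas.

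The only delicate point is the justification of the termwise manipulations, which I would settle once and for all by absolute convergence on the relevant domain together with the observation that the poles $\{0,-N,\ldots,-n\}$ of $F_n^{(p)}$ all lie in $[-n,0]$, so that $t+Nj$ and $t-Nj$ stay positive throughout the summation range $t\geq rn+1>n$. Beyond that, the proof is pure bookkeeping and I foresee no serious obstacle.
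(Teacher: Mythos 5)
Votre démonstration est correcte et suit essentiellement la même démarche que celle de l'article : dérivation terme à terme de la décomposition en éléments simples \eqref{DESdefn}, substitution dans \eqref{définitiondessériesS}, changement d'indice $t \leftarrow t \pm Nj$ et séparation en polylogarithme plus somme finie, avec les mêmes bornes sur les degrés. La seule différence est cosmétique : vous absorbez d'emblée les facteurs de Pochhammer dans des coefficients $\widetilde{c}_{n,i,j,p}$ indexés par l'ordre du pôle $i+p$, là où l'article les garde explicites et définit $Q_{n,i+p,p}=(-1)^p(i)_p z^{rn}P_{n,i}$ en fin de calcul.
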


\begin{proof}
    En dérivant $p$ fois les deux membres de \eqref{DESdefn}, on a l'expression
    \begin{equation}
    \label{dérivéepièmdelafractionrationnelle}
        F_n^{(p)}(t) = \sum_{i=1}^a \sum_{j=0}^{n/N} \frac{c_{n, i, j}(-1)^p(i)_p}{(t+Nj)^{i+p}}.
    \end{equation}

    On calcule d'une part pour $|z| \geq 1$ :
    \begin{align*}
        S_{n, p}^{[\infty]}(z) &= z^{rn}\sum_{t=rn+1}^{+\infty} \sum_{i=1}^a \sum_{j=0}^{n/N} \frac{c_{n, i, j}(-1)^p (i)_p}{(t+Nj)^{i+p}}z^{-t} \\
                        &= \sum_{i=1}^a (-1)^p(i)_pz^{rn} \sum_{j=0}^{n/N} c_{n, i, j} \sum_{t=rn+1}^{+\infty} \frac{z^{-t}}{(t+Nj)^{i+p}} \\
                        &= \sum_{i=1}^a (-1)^p(i)_pz^{rn} \sum_{j=0}^{n/N} c_{n, i, j}z^{Nj} \sum_{t=rn+Nj+1}^{+\infty} \frac{z^{-t}}{t^{i+p}} &(t \xleftarrow{} t + Nj) \\
                        &= \sum_{i=1}^a (-1)^p (i)_pz^{rn} \sum_{j=0}^{n/N} c_{n, i, j} z^{Nj}  \Bigg[ \Li_{i+p}(1/z) - \sum_{t=1}^{rn+Nj} \frac{z^{-t}}{t^{i+p}} \Bigg] \\
                        &= V_{n, p}^{[\infty]}(z) + \sum_{i=1}^a (-1)^p(i)_pz^{rn}P_{n, i}(z)\Li_{i+p}(1/z),
    \end{align*}
    où les polynômes $P_{n, i}$ sont définis par \eqref{definitiondesPki}, et où le changement d'indice $t \xleftarrow[]{} rn + Nj - t$ permet d'écrire
    \begin{equation*}
        V_{n, p}^{[\infty]}(z) :=  -\sum_{i=1}^a (-1)^p(i)_p \sum_{j=0}^{n/N} c_{n, i, j} \sum_{t=0}^{rn+Nj-1}\frac{z^{t}}{(rn+Nj-t)^{i+p}}.
    \end{equation*}
    D'autre part, on calcule pour $|z| \leq 1$ :
    \begin{align*}
        S_{n, p}^{[0]}(z) &= z^{rn}\sum_{t=rn+1}^{+\infty} \sum_{i=1}^a \sum_{j=0}^{n/N} \frac{c_{n, i, j}(-1)^p(i)_p}{(Nj-t)^{i+p}}z^t \\
                    &= \sum_{i=1}^a (-1)^p(i)_pz^{rn} \sum_{j = 0}^{n/N} c_{n, i, j}(-1)^{i+p} \sum_{t = rn+1}^{+\infty} \frac{z^t}{(t-Nj)^{i+p}} \\
                    &= \sum_{i=1}^a (-1)^p(i)_pz^{rn} \sum_{j=0}^{n/N} c_{n, i, j}z^{Nj}(-1)^{i+p} \sum_{t=rn-Nj+1}^{+\infty} \frac{z^t}{t^{i+p}} &(t \xleftarrow{} t - Nj) \\
                    &= \sum_{i=1}^a (-1)^p(i)_pz^{rn}\sum_{j=0}^{n/N} c_{n, i, j}z^{Nj}\Bigg[(-1)^{i+p}\Li_{i+p}(z) - \sum_{t=1}^{rn-Nj} \frac{z^t}{(-t)^{i+p}} \Bigg] \\
                    &= V_{n, p}^{[0]}(z) + \sum_{i=1}^a (-1)^p(i)_pz^{rn}P_{n, i}(z)(-1)^{i+p}\Li_{i+p}(z),
    \end{align*}
    où le changement d'indice $t \xleftarrow[]{} rn + Nj + t$ permet d'écrire
    \begin{equation*}
        V_{n, p}^{[0]}(z) := -\sum_{i=1}^a (-1)^p(i)_p \sum_{j=0}^{n/N} c_{n, i, j} \sum_{t=rn+Nj+1}^{2rn} \frac{z^t}{(rn+Nj-t)^{i+p}}.
    \end{equation*}
    
    Pour une plus grande cohérence entre les indices, nous introduisons les polynômes
    \begin{equation}
    \label{définitiondesQip}
        Q_{n, i+p, p}(z) := (-1)^p(i)_pz^{rn}P_{n, i}(z), \;\;1\leq i\leq a, \;\;\;\;\;\;\;\;\;\;\;\; Q_{n, i, p} = 0, \;\; i\notin \llbracket p+1, a+p \rrbracket,
    \end{equation}
    de sorte que les expressions obtenues se réécrivent
    \begin{align*}
        \begin{split}
            S_{n, p}^{[\infty]}(z) &= V_{n, p}^{[\infty]}(z) + \sum_{i=1}^{a+h} Q_{n, i, p}(z)\Li_{i}(1/z), \hspace{2cm} |z|\geq 1,\\
            S_{n, p}^{[0]}(z) &= V_{n, p}^{[0]}(z) + \sum_{i=1}^{a+h} Q_{n, i, p}(z)(-1)^{i}\Li_{i}(z), \hspace{1.6cm} |z|\leq 1.
        \end{split}
    \end{align*}
\end{proof}

\subsection{Processus de dérivation}
\label{paragrapheprocessusdedérivation}
En vue d'appliquer la \sref{proposition}{critèred'indépendancelinéaire}, nous souhaitons créer de nombreuses autres combinaisons linéaires. De plus, les dénominateurs des coefficients des polynômes $V_{n, p}^{[\infty]}(z)$ et $V_{n, p}^{[0]}(z)$ sont trop grands pour les méthodes diophantiennes que nous allons appliquer. Ainsi, nous considérons les quantités suivantes pour $(p, k)\in\llbracket 0, h\rrbracket \times \llbracket 2rn+2, \kappa n\rrbracket$ :
\begin{align}
    \begin{split}
    \label{sériesdérivéescommedesclenlespolylogarithmesbonscoefficients}
    S_{n, p}^{[\infty](k-1)}(z) &= Q_{n, 0, (p, k)}(z) + \sum_{i=1}^{a+h} Q_{n, i, (p, k)}(z)\Li_i(1/z), \hspace{2.05cm}|z| \geq 1, \\
    S_{n, p}^{[0](k-1)}(z) &= \Qbar_{n, 0, (p, k)}(z) + \sum_{i=1}^{a+h} Q_{n, i, (p, k)}(z)(-1)^i\Li_i(z), \hspace{1.5cm}|z| \leq 1.
    \end{split}
\end{align}
Les familles $\Big(1, 0, \Li_1(1/z), \Li_2(1/z), ..., \Li_a(1/z)\Big)$ et $\Big(0, 1, -\Li_1(z), \Li_2(z), ..., (-1)^a\Li_a(z)\Big)$ étant solutions du système différentiel $Y' = AY$ donné par \eqref{systemediffdespolylogarithmes}, les fractions rationnelles $Q_{n, i, (p, k)}$ s'obtiennent via les relations de récurrence suivantes :
\begin{equation}
\label{récurrencevérifiéeparlesQipk}
    \begin{cases}
        Q_{n, i, (p, k+1)} = Q'_{n, i, (p, k)} - \frac{1}{z} Q_{n, i+1, (p, k)}, &Q_{n, i, (p, 1)} = Q_{n, i, p}, \;\;\;\;\;\;\;1\leq i\leq a+h, \\
        Q_{n, 0, (p, k+1)} = Q'_{n, 0, (p, k)} + \frac{1}{z(1-z)} Q_{n, 1, (p, k)}, &Q_{n, 0, (p, 1)} = 0, \\
        \Qbar_{n, 0, (p, k+1)} = \Qbar'_{n, 0, (p, k)} - \frac{1}{1-z} Q_{n, 1, (p, k)}, &\Qbar_{n, 0, (p, 1)} = 0.
    \end{cases}
\end{equation}

\medskip
Nous considérons des indices $k \geq 2rn+2$, de sorte que les termes $V_{n, p}^{[\infty](k-1)}$ et $V_{n, p}^{[0](k-1)}$ qui sont censés apparaître dans \eqref{sériesdérivéescommedesclenlespolylogarithmesbonscoefficients} soient nuls. En effet, les polynômes $V_{n, p}^{[\infty]}$ et $V_{n, p}^{[0]}$ sont de degré $(r+1)n-1$, respectivement $2rn$.

Par ailleurs, les seules divergences possibles en $z=1$ dans les expressions \eqref{sériesdérivéescommedesclenlespolylogarithmesbonscoefficients} sont :
\begin{enumerate}
    \item[$\bullet$] logarithmique due au terme $\Li_1$ si $Q_{n, 1, (p, k)}(1) \neq 0$,
    \item[$\bullet$] polaire si $Q_{n, 0, (p, k)}$ ou $\Qbar_{n, 0, (p, k)}$ possède un pôle en $1$.
\end{enumerate}
Mais en considérant des indices $k \leq \kappa n$, on s'assure que les séries $S_{n, p}^{[\infty](k-1)}(z)$ et $S_{n, p}^{[0](k-1)}(z)$ convergent sur le cercle $|z|=1$. En effet, on a
\begin{align*}
    \begin{split}
        S_{n, p}^{[\infty](k-1)}(z) = z^{rn}\sum_{t=rn+1}^{+\infty} F_n^{(p)}(t)(-1)^{k-1}(t-rn)_{k-1}z^{-t-k+1}, \\
        S_{n, p}^{[0](k-1)}(z) = z^{rn}\sum_{t=rn+1}^{+\infty} F_n^{(p)}(-t)(rn+t-k+2)_{k-1}z^{t-k+1},
    \end{split}
\end{align*}
et les termes $F_n^{(p)}(t)(-1)^{k-1}(t-rn)_{k-1}$ et $F_n^{(p)}(-t)(rn+t-k+2)_{k-1}$ qui apparaissent ci-dessus sont polynomiaux en $t$ de degré $-\omega n -p +k - 1 \leq -2$. Nous en déduisons que pour des indices $k\leq\kappa n <\omega n$, aucun des deux types de divergence n'a lieu en $z=1$, puisqu'elles ne peuvent pas se compenser entre elles. En particulier, $Q_{n, 1, (p, k)}(1) = 0$ et le seul pôle éventuel de $Q_{n, 0, (p, k)}$ et $\Qbar_{n, 0, (p, k)}$ est 0. Au vu de la récurrence \eqref{récurrencevérifiéeparlesQipk}, ce pôle est d'ordre au plus $k-1$. Ainsi, on a que
\begin{equation}
    \begin{split}
    \label{pasdedivergencepolynomialenilogarithmique}
        &\forall (p,k)\in\llbracket 0, h\rrbracket\times\llbracket 1,\kappa n\rrbracket, \;\;\;\;\;\;\;\;\;\;\;\;\;\begin{split}
            &\;z^{k-1}Q_{n, 0, (p, k)} \;\;\text{et}\;\; z^{k-1}\Qbar_{n, 0, (p, k)}\;\;\text{sont} \\
            \text{de} &\text{s polynômes de degré au plus $(r+1)n$,}
        \end{split} \\ \\
        &\forall (p,k)\in\llbracket 0, h\rrbracket\times\llbracket 1,\kappa n\rrbracket, \;\;\;\;\;\;\;\;\;\;\;\;\; Q_{n, 1, (p, k)}(1) = 0.
    \end{split}
\end{equation}

\subsection{Combinaisons linéaires de valeurs d'une fonction L}
\label{soussection4.3}

Pour $1\leq i\leq a+h$, nous observons que les polynômes $Q_{n, i, p}$ définis par \eqref{définitiondesQip} sont dans $\Q[z^N]$, puisque $P_{n, i}\in\Q[z^N]$ et $N$ divise $rn$. Les relations de récurrence \eqref{récurrencevérifiéeparlesQipk} donnent alors $z^{k-1}Q_{n, i, (p, k)} \in \Q[z^N]$ pour tout $k \geq 1$. L'évaluation de ces polynômes ne dépend donc que de la puissance $N$-ième du point où l'on évalue. Ainsi, en rappelant que $\mu$ est une racine $N$-ième primitive de l'unité, on a que
\begin{equation}
\label{lesQnipksontdansQzN}
    \forall i\in\llbracket 1, a+h\rrbracket,\;\;\;\forall \ell\in\llbracket 0, N-1\rrbracket, \;\;\;\;\;\;\;\;  (\mu^\ell z)^{k-1}Q_{n, i, (p, k)}(\mu^\ell z) = z^{k-1}Q_{n, i, (p, k)}(z).
\end{equation}

\medskip
Pour $i=0$, les polynômes $z^{k-1}Q_{n, 0, (p, k)}(z)$ et $z^{k-1}\Qbar_{n, 0, (p, k)}(z)$ ne sont pas, en général, dans $\Q[z^N]$. C'est pourquoi nous considérons les décompositions
\begin{align}
    \begin{split}
    \label{décompositiondesQ0}
    z^{k-1}Q_{n, 0, (p, k)}(z) &=: \sum_{m=0}^{N-1} z^mQ_{n, 0, (p, k)}^{<m>}(z), \;\;\;\;\;\;\;\;\;\;Q_{n, 0, (p, k)}^{<m>}\in\Q[z^N], \\
    z^{k-1}\Qbar_{n, 0, (p, k)}(z) &=: \sum_{m=0}^{N-1} z^m\Qbar_{n, 0, (p, k)}^{<m>}(z), \;\;\;\;\;\;\;\;\;\;\Qbar_{n, 0, (p, k)}^{<m>}\in\Q[z^N]. 
    \end{split} 
\end{align}

Par ailleurs, nous considérons les coefficients
\begin{equation*}
    \hat{\chi}(\ell) := \frac{1}{N}\sum_{m=0}^{N-1} \chi(m)\mu^{-\ell m}, \;\;\;\;\;\;\;\; 0 \leq \ell \leq N-1.
\end{equation*}
Ils permettent de reconstruire les valeurs de la fonction $L(\chi, i, z)$ à partir des valeurs de la fonction $\Li_i$ aux points $z, \mu z, \mu^2 z, ..., \mu^{N-1} z$, comme décrit dans le lemme suivant. Nous y énonçons de plus une identité polynomiale qui servira dans la \autoref{paragraphestructureshidlovskii}.
\begin{lemma}
\label{formulesd'inversiondeFourier}
    (Formules d'inversion de Fourier)
    
    On a les identités
    \begin{enumerate}
        \item[$(i)$] $\sum_{\ell = 0}^{N-1} \hat{\chi}(\ell)\mu^{\ell m} = \chi(m),\hspace{3.7cm} m\in\N$,
        \item[$(ii)$] $\sum_{\ell = 0}^{N-1} \hat{\chi}(\ell)\Li_i(\mu^\ell z) = L(\chi, i, z), \hspace{2.3cm} i\in\N^*, \;\;\;|z|\leq 1, \;\;\;(i, \mu^\ell z) \neq (1, 1)$.
    \end{enumerate}
    \medskip
    De plus, les polynômes
    \begin{equation*}
            Q_{n, 0, (p, k)}^{\{\ell\}}(z) := \mu^{\ell(k-1)}Q_{n, 0, (p, k)}(\mu^\ell z)
    \end{equation*}
    vérifient pour $z\in\C^*$
    \begin{enumerate}
        \item[$(iii.a)$] $\sum_{m=0}^{N-1} \mu^{\ell m}z^{m}Q_{n, 0, (p, k)}^{<m>}(z) = z^{k-1}Q_{n, 0, (p, k)}^{\{\ell\}}(z), \hspace{2.75cm}0\leq \ell\leq N-1$,
        \item[$(iii.b)$] $\frac{1}{N}\sum_{\ell=0}^{N-1}\mu^{-\ell m}z^{k-1}Q_{n, 0, (p, k)}^{\{\ell\}}(z) = z^mQ_{n, 0, (p, k)}^{<m>}(z),\hspace{2.1cm}0\leq m\leq N-1$.
    \end{enumerate}
    Ceci reste valable en remplaçant $Q$ par $\Qbar$.
\end{lemma}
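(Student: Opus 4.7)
Le plan est d'invoquer l'orthogonalité des caractères de $\Z/N\Z$, c'est-à-dire $\sum_{\ell=0}^{N-1}\mu^{\ell k} = N$ si $N\mid k$ et $0$ sinon, qui est à la source des formules d'inversion de Fourier discrète. Pour $(i)$, je substituerais directement la définition $\hat{\chi}(\ell) = \frac{1}{N}\sum_{m'=0}^{N-1} \chi(m')\mu^{-\ell m'}$ puis j'interviertirais les sommes, de sorte que
$$\sum_{\ell=0}^{N-1} \hat{\chi}(\ell)\mu^{\ell m} = \frac{1}{N}\sum_{m'=0}^{N-1} \chi(m')\sum_{\ell=0}^{N-1} \mu^{\ell(m-m')}.$$
Par orthogonalité, la somme intérieure vaut $N$ lorsque $m\equiv m' \;[N]$ et $0$ sinon; la $N$-périodicité de $\chi$ donne alors $\chi(m)$.

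Pour $(ii)$, je développerais chaque polylogarithme en série entière, puis j'interviertirais les sommations (justifié par convergence absolue pour $|z|<1$, étendue au bord $|z|=1$ en excluant le point singulier $(i, \mu^\ell z) = (1, 1)$), pour obtenir
$$\sum_{\ell=0}^{N-1} \hat{\chi}(\ell) \Li_i(\mu^\ell z) = \sum_{m=1}^{+\infty} \frac{z^m}{m^i}\sum_{\ell=0}^{N-1} \hat{\chi}(\ell) \mu^{\ell m} = \sum_{m=1}^{+\infty} \frac{\chi(m)z^m}{m^i} = L(\chi, i, z),$$
l'avant-dernière égalité découlant directement de $(i)$.

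Pour $(iii.a)$, je partirais de la décomposition \eqref{décompositiondesQ0} et substituerais $\mu^\ell z$ à $z$. Comme chaque $Q_{n, 0, (p, k)}^{<m>}\in\Q[z^N]$ est invariant sous $z\mapsto \mu^\ell z$, on obtient directement $\mu^{\ell(k-1)}z^{k-1}Q_{n, 0, (p, k)}(\mu^\ell z) = \sum_m \mu^{\ell m}z^m Q_{n, 0, (p, k)}^{<m>}(z)$, et le membre de gauche se reconnaît comme $z^{k-1}Q_{n, 0, (p, k)}^{\{\ell\}}(z)$ par définition. Pour $(iii.b)$, je multiplierais $(iii.a)$ par $\frac{1}{N}\mu^{-\ell m'}$ et sommerais sur $\ell\in\llbracket 0, N-1\rrbracket$; l'orthogonalité ne laisserait subsister que le terme $m = m'$, fournissant l'identité. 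Le traitement de $\Qbar_{n, 0, (p, k)}$ est strictement identique, toutes les opérations étant linéaires en les polynômes considérés. L'ensemble repose sur des manipulations standards de Fourier discret et ne présente pas d'obstacle substantiel; le seul point demandant du soin est la justification de l'interversion des sommes dans $(ii)$, assurée par l'exclusion du point $(i, \mu^\ell z) = (1, 1)$.
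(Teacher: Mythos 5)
Votre démonstration est correcte et suit essentiellement la même démarche que celle de l'article : orthogonalité des racines $N$-ièmes de l'unité pour $(i)$ et $(iii.b)$, développement en série et interversion avec la somme finie sur $\ell$ pour $(ii)$, et substitution $z \mapsto \mu^\ell z$ combinée à l'invariance des $Q_{n, 0, (p, k)}^{<m>} \in \Q[z^N]$ pour $(iii.a)$. Rien à redire.
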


\begin{proof}
    \begin{enumerate}
        \item[]
        \item[$(i)$] On calcule pour $m\in\N$
            \begin{align*}
                \sum_{\ell=0}^{N-1} \hat{\chi}(\ell) \mu^{\ell m} &= \frac{1}{N} \sum_{\ell=0}^{N-1} \sum_{\widetilde{m}=0}^{N-1} \chi(\widetilde{m}) \mu^{-\ell(\widetilde{m}-m)} \\
                                                    &= \chi(m) + \frac{1}{N}\sum_{{\widetilde{m}=0}\atop{\widetilde{m} \not\equiv m \;[N]}}^{N-1} \chi(\widetilde{m}) \left( \sum_{\ell = 0}^{N-1} \mu^{-\ell(\widetilde{m}-m)} \right).
            \end{align*}        
            On obtient le résultat car pour $\widetilde{m} \not\equiv m \;[N]$, la somme intérieure sur $\ell$ est nulle.
        \item[$(ii)$]
            En utilisant le point précédent, on calcule
            \begin{align*}
                \sum_{\ell=0}^{N-1} \hat{\chi}(\ell)\Li_i(\mu^\ell z) &= \sum_{\ell=0}^{N-1} \hat{\chi}(\ell) \left(\sum_{m=1}^{+\infty} \frac{\mu^{\ell m} z^m}{m^i}\right) \\
                                                                &= \sum_{m=1}^{+\infty} \frac{z^m}{m^i} \left(\sum_{\ell=0}^{N-1} \hat{\chi}(\ell) \mu^{\ell m}\right) \\
                                                                &= \sum_{m=1}^{+\infty} \frac{\chi(m)z^m}{m^i}.
            \end{align*}
        \item[$(iii.a)$] Il suffit d'effectuer pour $\ell \in \llbracket 0, N-1\rrbracket$ le changement de variable $\;z \xleftarrow[]{} \mu^\ell z\;$ dans la définition \eqref{décompositiondesQ0}. On rappelle que pour $m\in\llbracket 0, N-1\rrbracket$ on a $Q_{n, 0, (p, k)}^{<m>}(\mu^\ell z) = Q_{n, 0, (p, k)}^{<m>}(z)$, car $Q_{n, 0, (p, k)}^{<m>} \in \Q[z^{\pm N}]$.
        \item[$(iii.b)$] On calcule pour $m \in \llbracket 0, N-1\rrbracket$ en utilisant $(iii.a)$ :
            \begin{align*}
                \frac{1}{N}\sum_{\ell=0}^{N-1} \mu^{-\ell m} &z^{k-1}Q_{n, 0, (p, k)}^{\{\ell\}}(z) = \frac{1}{N}\sum_{\ell=0}^{N-1} \sum_{\widetilde{m} = 0}^{N-1} \mu^{\ell(\widetilde{m}-m)} z^{\widetilde{m}} Q_{n, 0, (p, k)}^{<\widetilde{m}>}(z) \\
                &\hspace{1cm}= z^{m}Q_{n, 0, (p, k)}^{<m>}(z) + \frac{1}{N}\sum_{{\widetilde{m}=0}\atop{\widetilde{m} \neq m}}^{N-1} z^{\widetilde{m}} Q_{n, 0, (p, k)}^{<\widetilde{m}>}(z) \left( \sum_{\ell = 0}^{N-1} \mu^{-\ell(\widetilde{m}-m)} \right).
            \end{align*}
            On obtient le résultat car pour $\widetilde{m} \neq m$, la somme intérieure sur $\ell$ est nulle.
    \end{enumerate}
\end{proof}

Nous pouvons finalement poser pour $(p, k)\in\llbracket 0, h\rrbracket \times \llbracket 2rn+2, \kappa n\rrbracket$ et $z\in\C^*$ avec $|z| \leq 1$ :
\begin{equation}
\label{définitiondeLambdapk}
    \widetilde{\Lambda}_{n, (p, k)}(z) := \sum_{\ell = 0}^{N-1} \hat{\chi}(\ell) \Bigg[  \left(\frac{1}{\mu^\ell z}\right)^{k-1}S_{n, p}^{[\infty](k-1)}\left(\frac{1}{\mu^\ell z}\right) + (-1)^\varepsilon (\mu^\ell z)^{k-1}S_{n, p}^{[0](k-1)}(\mu^\ell z)  \Bigg]. \\
\end{equation}
Cette quantité est une combinaison linéaire à coefficients dans $\Q(z)$ en les constantes $\chi(m)$ et les fonctions $L(\chi, i, z)$. 

\begin{lemma}
\label{lemmeexpressiondelambdaenfonctiondez}
    Soit $(p, k)\in\llbracket 0, h\rrbracket\times\llbracket 2rn+2, \kappa n\rrbracket$. Avec la convention que l'exposant $<N>$ signifie $<0>$, on a l'égalité suivante pour tout \footnote{On rappelle que, d'après \eqref{pasdedivergencepolynomialenilogarithmique}, on a $Q_{n, 1, (p, k)}(1) = 0$. Ainsi, la série $L(\chi, 1, 1) = \sum_{m=1}^{+\infty} \frac{\chi(m)}{m}$, qui est divergente si $\chi$ est principal, n'apparaît pas si l'on évalue en $z=1$.} $z\in\C^*$ avec $|z| \leq 1$ :
    \begin{align}
        \begin{split}
        \label{expressiondelambdaenfonctiondez}
            \widetilde{\Lambda}_{n, (p, k)}(z) &= \sum_{m=0}^{N-1} \Bigg( \frac{1}{z^{N-m}}Q_{n, 0, (p, k)}^{<N-m>}\left(\frac{1}{z}\right) + (-1)^\varepsilon z^m\Qbar_{n, 0, (p, k)}^{<m>}(z)\Bigg) \chi(m) \\
                        &\hspace{1.2cm}+\sum_{i=1}^{a+h} \Bigg( \frac{1}{z^{k-1}} Q_{n, i, (p, k)}\left(\frac{1}{z}\right) + (-1)^{i+\varepsilon} z^{k-1}Q_{n, i, (p, k)}(z) \Bigg)L(\chi, i, z).
        \end{split}
    \end{align}
\end{lemma}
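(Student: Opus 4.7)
L'idée est de substituer dans la définition \eqref{définitiondeLambdapk} de $\widetilde{\Lambda}_{n,(p,k)}(z)$ les expressions de $S_{n,p}^{[\infty](k-1)}$ et $S_{n,p}^{[0](k-1)}$ données par \eqref{sériesdérivéescommedesclenlespolylogarithmesbonscoefficients}, puis de séparer le résultat en trois morceaux~: les termes polylogarithmiques pour $1\leq i\leq a+h$, la contribution de $Q_{n,0,(p,k)}$ issue du développement à l'infini, et la contribution de $\Qbar_{n,0,(p,k)}$ issue du développement en $0$. Chaque morceau se ramènera à un facteur polynomial indépendant de $\ell$ multiplié par une somme du type $\sum_{\ell} \hat{\chi}(\ell) \mu^{\pm\ell m}$ ou $\sum_{\ell} \hat{\chi}(\ell) \Li_i(\mu^\ell z)$, qui s'évaluera par le point approprié du \sref{lemme}{formulesd'inversiondeFourier}.

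Pour les termes polylogarithmiques, la clé est la relation \eqref{lesQnipksontdansQzN} qui me permet d'extraire les facteurs indépendants de $\ell$~: du côté à l'infini, $(1/(\mu^\ell z))^{k-1} Q_{n,i,(p,k)}(1/(\mu^\ell z)) = (1/z)^{k-1} Q_{n,i,(p,k)}(1/z)$ (appliquée à l'argument $\mu^{-\ell}/z$), et du côté en $0$, $(\mu^\ell z)^{k-1} Q_{n,i,(p,k)}(\mu^\ell z) = z^{k-1} Q_{n,i,(p,k)}(z)$. La somme restante $\sum_\ell \hat{\chi}(\ell) \Li_i(\mu^\ell z)$ se réduit alors à $L(\chi, i, z)$ par \sref{lemme}{formulesd'inversiondeFourier}\,$(ii)$, produisant la seconde ligne de \eqref{expressiondelambdaenfonctiondez} avec le coefficient combiné $(1/z)^{k-1} Q_{n,i,(p,k)}(1/z) + (-1)^{i+\varepsilon} z^{k-1} Q_{n,i,(p,k)}(z)$ (le signe $(-1)^i$ provenant de \eqref{sériesdérivéescommedesclenlespolylogarithmesbonscoefficients} et le signe $(-1)^\varepsilon$ de \eqref{définitiondeLambdapk}).

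Pour la contribution de $\Qbar_{n,0,(p,k)}$ en $0$, la décomposition \eqref{décompositiondesQ0} jointe à $\Qbar_{n,0,(p,k)}^{<m>}\in\Q[z^N]$ donne directement $(\mu^\ell z)^{k-1} \Qbar_{n,0,(p,k)}(\mu^\ell z) = \sum_{m=0}^{N-1} \mu^{\ell m} z^m \Qbar_{n,0,(p,k)}^{<m>}(z)$~; sommer contre $\hat{\chi}(\ell)$ et appliquer \sref{lemme}{formulesd'inversiondeFourier}\,$(i)$ produit $\sum_m z^m \Qbar_{n,0,(p,k)}^{<m>}(z) \chi(m)$, qui après multiplication par $(-1)^\varepsilon$ coïncide avec le second terme de la première ligne de \eqref{expressiondelambdaenfonctiondez}.

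La contribution de $Q_{n,0,(p,k)}$ à l'infini est analogue, au prix d'une petite manipulation d'indices~: en posant $w=1/(\mu^\ell z)$ dans $w^{k-1}Q_{n,0,(p,k)}(w)=\sum_m w^m Q_{n,0,(p,k)}^{<m>}(w)$ et en utilisant $Q^{<m>}\in\Q[z^N]$, on obtient $(1/(\mu^\ell z))^{k-1} Q_{n,0,(p,k)}(1/(\mu^\ell z)) = \sum_{m=0}^{N-1} \mu^{-\ell m} z^{-m} Q_{n,0,(p,k)}^{<m>}(1/z)$. La somme $\sum_\ell \hat{\chi}(\ell) \mu^{-\ell m}$ vaut $\chi(N-m)$, grâce à la $N$-périodicité de $\chi$ combinée au \sref{lemme}{formulesd'inversiondeFourier}\,$(i)$. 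L'étape principale de mise en forme est alors la réindexation $m\mapsto N-m$ (involution de $\{0,\ldots,N-1\}$ fixant $0$) qui réécrit la somme sous la forme $\sum_{m=0}^{N-1} z^{-(N-m)} Q_{n,0,(p,k)}^{<N-m>}(1/z) \chi(m)$ attendue~; la convention $<N>=<0>$ du lemme règle sans dommage le cas $m=0$, puisque $\chi(0)=0$ pour tout caractère de Dirichlet modulo $N\geq 2$ fait disparaître le terme correspondant dans les deux paramétrisations. Aucune difficulté sérieuse n'est à prévoir au-delà de ce suivi soigneux de la réindexation et des conventions ; tout le reste est un calcul direct.
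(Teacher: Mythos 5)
Votre démonstration est correcte et suit essentiellement la même démarche que la preuve de l'article : substitution de \eqref{sériesdérivéescommedesclenlespolylogarithmesbonscoefficients} et \eqref{décompositiondesQ0} dans la définition \eqref{définitiondeLambdapk}, extraction des facteurs indépendants de $\ell$ grâce à \eqref{lesQnipksontdansQzN} et au fait que les $Q^{<m>}, \Qbar^{<m>}$ sont dans $\Q[z^{\pm N}]$, puis application des formules d'inversion de Fourier du \sref{lemme}{formulesd'inversiondeFourier} et réindexation $m\mapsto N-m$. Votre remarque explicite sur le terme de bord $m=0$ (rendu inoffensif par $\chi(0)=0$ pour $N\geq 2$ et par la convention $<N>=<0>$) est même plus soigneuse que la rédaction de l'article, qui effectue cette réindexation sans commentaire.
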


\begin{proof}
    On a
    \begin{align*}
        \widetilde{\Lambda}_{n, (p, k)}(z) &= \sum_{\ell = 0}^{N-1} \hat{\chi}(\ell) \Bigg[  \sum_{m=0}^{N-1} \left(\frac{1}{\mu^\ell z}\right)^m Q_{n, 0, (p, k)}^{<m>}\left(\frac{1}{\mu^\ell z}\right) + (-1)^\varepsilon \sum_{m=1}^{N-1} (\mu^\ell z)^m\Qbar_{n, 0, (p, k)}^{<m>}(\mu^\ell z)  \Bigg] \\
                         &\hspace{0.5cm}+ \sum_{\ell = 0}^{N-1} \hat{\chi}(\ell) \Bigg[  \sum_{i = 1}^{a+h} \left(\frac{1}{\mu^\ell z}\right)^{k-1}Q_{n, i, (p, k)}\left(\frac{1}{\mu^\ell z}\right)\Li_i(\mu^\ell z) \hspace{1.5cm}  \\
                         &\hspace{2cm} + (-1)^\varepsilon \sum_{i = 1}^{a+h} (\mu^\ell z)^{k-1}Q_{n, i, (p, k)}(\mu^\ell z)(-1)^i\Li_i(\mu^\ell z) \Bigg] \hspace{1cm}\text{\Big(par \eqref{sériesdérivéescommedesclenlespolylogarithmesbonscoefficients} et \eqref{décompositiondesQ0}\Big)} \\ 
                         &= \sum_{m=0}^{N-1} \Bigg[  \frac{1}{z^m} Q_{n, 0, (p, k)}^{<m>}\left(\frac{1}{z}\right)\Bigg(\sum_{\ell = 0}^{N-1} \hat{\chi}(\ell)\mu^{-\ell m} \Bigg) + (-1)^\varepsilon z^m \Qbar_{n, 0, (p, k)}^{<m>}(z) \Bigg(\sum_{\ell = 0}^{N-1} \hat{\chi}(\ell)\mu^{\ell m} \Bigg) \Bigg] \\
                         &\hspace{0.5cm}+ \sum_{i=1}^{a+h} \Bigg[  \Bigg( \frac{1}{z^{k-1}} Q_{n, i, (p, k)}\left(\frac{1}{z}\right) + (-1)^{i+\varepsilon}z^{k-1}Q_{n, i, (p, k)}(z) \Bigg) \Bigg( \sum_{\ell = 0}^{N-1} \hat{\chi}(\ell) \Li_i(\mu^\ell z)\Bigg) \Bigg] \\ 
                         &\hspace{11.6cm}\text{\Big(par \eqref{lesQnipksontdansQzN} et \eqref{décompositiondesQ0}\Big)} \\
                         &= \sum_{m=0}^{N-1} \Bigg( \frac{1}{z^{N-m}}Q_{n, 0, (p, k)}^{<N-m>}\left(\frac{1}{z}\right) + (-1)^\varepsilon z^m\Qbar_{n, 0, (p, k)}^{<m>}(z)\Bigg) \chi(m)   \\
                         &\hspace{0.5cm}+ \sum_{i=1}^{a+h} \Bigg( \frac{1}{z^{k-1}} Q_{n, i, (p, k)}\left(\frac{1}{z}\right) + (-1)^{i+\varepsilon} z^{k-1}Q_{n, i, (p, k)}(z) \Bigg) L(\chi, i, z) . \hspace{0.5cm}\text{\Big(\sref{lemme}{formulesd'inversiondeFourier}\Big)}
    \end{align*}
\end{proof}

Évaluée en $z=-1$, cette identité devient
\begin{align}
    \begin{split}
        \label{cllambdaen-1}
    \widetilde{\Lambda}_{n, (p, k)}(-1) &= \sum_{m=0}^{N-1} (-1)^{m+1}\bigg( Q_{n, 0, (p, k)}^{<N-m>}(-1) - (-1)^\varepsilon \Qbar_{n, 0, (p, k)}^{<m>}(-1) \bigg) \chi(m) \\
                        &\hspace{2.3cm}+ (-1)^{k-1}\sum_{i=1}^{a+h} \Big(1 + (-1)^{i+\varepsilon}\Big)Q_{n, i, (p, k)}(-1)L(\chi, i, -1)
    \end{split}
\end{align}
puisque $N$ est impair.

\subsection{Notations pour la suite}
\label{paragraphenotationspourlasuite}

On pose
\begin{equation*}
    \delta_{n, k} := d_k^2\Delta_{a+h, \max\big(k, (r+1)n\big)},
\end{equation*}
où $d_k$ est le $\mathrm{ppcm}$ des entiers de $1$ à $k$ et $\Delta_{a, k}$ est défini dans \eqref{estimationsdketdeltaak}. La quantité $\delta_{n , k}$ dépend bien sûr des paramètres $a+h \leq 2a$ et $r$. Nous sommes intéressés par son asymptotique lorsque $n\to+\infty$, pour des valeurs de $k$ bornées par $\kappa n$ avec un paramètre $\kappa > 2r \geq r+1$, de sorte que $\max\big(k, (r+1)n\big) \leq \kappa n$. D'après \eqref{estimationsdketdeltaak}, on a
\begin{equation}
\label{asymptotiquededelta}
    \delta_{n, k} \stackunder{$\leq$}{$\scriptscriptstyle{n\to+\infty}$} \Big( e^3 (2a+1) \Big)^{\kappa n + o(n)}.
\end{equation}

\medskip
Nous considérons les fonctions

\begin{equation*}
\label{nouvellesclavecdescoefsentiers}
    \frac{\delta_{n, k}}{(k-1)!} z^{k-1}\widetilde{\Lambda}_{n, (p, k)}(z).
\end{equation*}

D'après le \sref{lemme}{lemmeexpressiondelambdaenfonctiondez}, ce sont des combinaisons linéaires de $\chi(0), ..., \chi(N-1)$, $L(\chi, 1, z), ...,$ $L(\chi, a+h, z)$ à coefficients dans $\Q[z^{\pm1}]\;$ (en réalité, nous montrons dans la \autoref{paragrapheestimationdescoefficients} que les coefficients sont dans $\Z[z^{\pm 1}]$). Le système différentiel \eqref{grossystemediffcompliqué} que nous serons amenés à considérer pour appliquer le \sref{théorème}{Shidlovskii} dans la \autoref{partie6} possède une singularité en $z=1$. Afin d'éviter celle-ci, nous préférons évaluer en $z=-1$ et considérer pour tout $n\in\mathcal{N}$ et tout $(p, k)\in\llbracket 0, h\rrbracket\times\llbracket 2rn+2, \kappa n\rrbracket$ les quantités

\begin{equation}
\label{definitiondelambdafinal}
    \Lambda_{n, (p, k)} := \frac{\delta_{n, k}}{(k-1)!}(-1)^{k-1}\widetilde{\Lambda}_{n, (p, k)}(-1).
\end{equation}

Ce sont des combinaisons linéaires de $\chi(0), ..., \chi(N-1)$, $L(\chi, 1, -1), ..., L(\chi, a+h, -1)$ où tous les termes $L(\chi, i, -1)$ avec $i\not\equiv\varepsilon[2]$ ont un coefficient nul, de par le facteur $\big(1+(-1)^{i+\varepsilon}\big)$ dans \eqref{cllambdaen-1}. Au vu du formalisme adopté dans la \autoref{partie6}, nous préférons les voir comme des combinaisons linéaires des nombres
\begin{equation}
\label{définitiondeszetai}
    \zeta_i := \begin{cases}
        2L(\chi, i, -1) &\text{si $i\equiv\varepsilon [2]$} \\
        0 &\text{sinon}
    \end{cases}, \hspace{2cm} 1\leq i\leq a+h
\end{equation}
et
\begin{equation*}
    \zeta_0^{<m>} := \chi(m), \hspace{4.8cm} 0\leq m\leq N-1.
\end{equation*}
En effet, la relation \eqref{cllambdaen-1} se réécrit
\begin{equation}
\label{Lambdacommecldeszeta}
    \Lambda_{n, (p, k)} = \sum_{m=0}^{N-1}\lambda_{n, 0, (p, k)}^{<m>} \zeta_0^{<m>} + \sum_{i=1}^{a+h} \lambda_{n, i, (p, k)} \zeta_i
\end{equation}
avec
\begin{equation}
\label{definitiondespetitslambdas}
    \begin{split}
        \lambda_{n, i, (p, k)} &:= \frac{\delta_{n, k}}{(k-1)!}Q_{n, i, (p, k)}(-1), \\
    \lambda_{n, 0, (p, k)}^{<m>} &:= (-1)^{k+m}\frac{\delta_{n, k}}{(k-1)!}\bigg( Q_{n, 0, (p, k)}^{<N-m>}(-1) - (-1)^\varepsilon\Qbar_{n, 0, (p, k)}^{<m>}(-1) \bigg).
    \end{split}
\end{equation}

\section{Estimation des combinaisons et de leurs coefficients}
\label{partie5}

Dans l'optique d'appliquer la \sref{proposition}{critèred'indépendancelinéaire} aux combinaisons linéaires $\Lambda_{n, (p, k)}$ définies dans la \autoref{paragraphenotationspourlasuite}, nous montrons dans la \autoref{paragrapheestimationdescoefficients} qu'elles sont à coefficients entiers à croissance au plus géométrique en $n$. Puis nous montrons dans la \autoref{paragrapheestimationdelacombinaison} que $|\Lambda_{n, (p, k)}|$ décroit au moins géométriquement vers $0$ lorsque $n\to+\infty$.

\subsection{Estimation des coefficients}
\label{paragrapheestimationdescoefficients}

\begin{proposition}
\label{majorationdescoefficients}
    Pour tout $i\in\llbracket 1, a+h\rrbracket$, tout $m\in\llbracket 0, N-1\rrbracket$ et tout $(p, k)\in\llbracket 0, h\rrbracket \times \llbracket 2rn+2, \kappa n\rrbracket$, on a
    \begin{enumerate}[left=9pt]
        \item[$(i)$] $|\lambda_{n, i, (p, k)}|,\; |\lambda_{n, 0, (p, k)}^{<m>}| \stackunder{$\leq$}{$\scriptscriptstyle{n\to+\infty}$} \beta^{n+o(n)}$,
        \item[$(ii)$] $|\lambda_{n, i, (p, k)}|,\; |\lambda_{n, 0, (p, k)}^{<m>}| \in \mathbb{Z}$,
    \end{enumerate}
    avec
    \begin{equation*}
        \beta := \Big(32e^3(2a+1)\Big)^\kappa \xi,
    \end{equation*}
    où $\xi$ est défini dans la \sref{proposition}{existencedescij}.
\end{proposition}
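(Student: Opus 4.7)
L'idée centrale est que les récurrences \eqref{récurrencevérifiéeparlesQipk} définissant $Q_{n, i, (p, k)}$, $Q_{n, 0, (p, k)}$ et $\Qbar_{n, 0, (p, k)}$ coïncident respectivement avec \eqref{definitiondesPki} et \eqref{recurrencedesPk0}, seules les données initiales différant. Pour $p+1\leq i\leq a+p$, on a $Q_{n, i, p}(z) = (-1)^p(i-p)_p z^{rn} P_{n, i-p}(z)$ ; le changement d'indice $j' = j + rn/N$ (entier puisque $n\in\mathcal{N}$) présente cette donnée initiale comme un polynôme en $z^N$ de degré au plus $(r+1)n$ dont les coefficients sont majorés en valeur absolue par $(a+h)^p|c_{n, i-p, j'-rn/N}|$. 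Les preuves des \sref{propositions}{propositionexistenceetestimationdestheta} et \sref{}{propositionexistenceetestimationdesthetazéro} s'adaptent alors \emph{mutatis mutandis} en y remplaçant $n$ par $(r+1)n$ et $a$ par $a+h$, donnant des expressions explicites pour $z^{k-1}Q_{n, i, (p, k)}(z)$, $z^{k-1}(1-z)^{k-1}Q_{n, 0, (p, k)}(z)$ et $z^{k-1}(1-z)^{k-1}\Qbar_{n, 0, (p, k)}(z)$, à coefficients bornés respectivement par $k^{a+h}2^{(r+1)n}(k-1)!$ et $k^{a+h+1}8^k(k-1)!$ (en utilisant $\max(k, (r+1)n) = k$, car $k\geq 2rn+2 > (r+1)n$ dès que $r\geq 1$), et à valeurs dans $\mathbb{Z}$ après multiplication par $d_k\Delta_{a+h, k}/(k-1)!$ (resp. $d_k^2\Delta_{a+h, k}/(k-1)!$).

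Pour $1\leq i\leq a+h$, une majoration de $|Q_{n, i, (p, k)}(-1)|$ par la somme des valeurs absolues des coefficients du polynôme $z^{k-1}Q_{n, i, (p, k)}(z)$ fournit $\mathrm{poly}(n)\cdot 2^{(r+1)n}(k-1)!\xi^{n+o(n)}$ ; combinée à $\delta_{n, k} \leq (e^3(2a+1))^{\kappa n + o(n)}$ \eqref{asymptotiquededelta} et à $r+1<\kappa$ (car $\kappa > 2r \geq r+1$ pour $r\geq 1$), cela donne aisément la borne voulue. Le cas $i = 0$ constitue la difficulté principale : la formule explicite donne les coefficients $r_t$ du polynôme $R(z):= z^{k-1}(1-z)^{k-1}Q_{n, 0, (p, k)}(z)$, mais ce sont les coefficients $s_u$ de $S(z):= z^{k-1}Q_{n, 0, (p, k)}(z)$ — qui est bien un polynôme de degré $\leq (r+1)n$ d'après \eqref{pasdedivergencepolynomialenilogarithmique} — que l'on doit majorer pour estimer $Q_{n, 0, (p, k)}^{<m>}(-1)$ et $\Qbar_{n, 0, (p, k)}^{<m>}(-1)$ (qui sont, au signe près, des sommes des $s_u$ selon la classe de $u$ modulo $N$). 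L'inversion $\frac{1}{(1-z)^{k-1}} = \sum_{m\geq 0}\binom{m+k-2}{k-2}z^m$ donne $s_u = \sum_{t\leq u} r_t\binom{u-t+k-2}{k-2}$, d'où $|s_u| \leq 2^{u+k-2}\sum_t |r_t|$, puis par sommation sur $u\leq (r+1)n$ on obtient $|Q_{n, 0, (p, k)}^{<m>}(-1)|, |\Qbar_{n, 0, (p, k)}^{<m>}(-1)| \leq \mathrm{poly}(n)\cdot 2^{(r+1+\kappa)n}\cdot 8^{\kappa n}(k-1)!\xi^{n+o(n)}$. Puisque $2^{(r+1+\kappa)/\kappa} = 2\cdot 2^{(r+1)/\kappa} < 4$, c'est bien un facteur $4\cdot 8 = 32$ qui émerge et produit la constante annoncée dans $\beta$.

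L'intégralité des $\lambda_{n, i, (p, k)}$ et $\lambda_{n, 0, (p, k)}^{<m>}$ découle enfin du choix $\delta_{n, k} = d_k^2\Delta_{a+h, \max(k, (r+1)n)}$, qui élimine les dénominateurs $d_k\Delta$ (pour les coefficients de type $\theta$) et $d_k^2\Delta$ (pour ceux de type $\vartheta, \varthetabar$) ; par ailleurs $(-1)^p(i-p)_p \in \mathbb{Z}$, $\binom{m+k-2}{k-2}\in\mathbb{Z}$ et $c_{n, i, j}\in\mathbb{Z}$ par la \sref{proposition}{existencedescij}. En combinant ces estimations avec $\delta_{n, k}$, on obtient $|\lambda|\leq \big(32e^3(2a+1)\big)^{\kappa n + o(n)}\xi^{n+o(n)} = \beta^{n+o(n)}$.
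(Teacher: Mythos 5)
Votre démonstration est correcte et suit essentiellement la même voie que celle de l'article : réduction aux \sref{propositions}{propositionexistenceetestimationdestheta} et \sref{}{propositionexistenceetestimationdesthetazéro} avec les paramètres $a+h$ et $(r+1)n$ (les récurrences \eqref{récurrencevérifiéeparlesQipk} étant identiques à \eqref{definitiondesPki} et \eqref{recurrencedesPk0}), puis récupération des coefficients de $z^{k-1}Q_{n,0,(p,k)}$ par produit de Cauchy avec $\frac{1}{(1-z)^{k-1}}$, même comptabilité des constantes donnant le facteur $32$, et même argument d'intégralité via $\delta_{n,k}$. Les petites variantes (observation $\max(k,(r+1)n)=k$, majoration $(a+h)^p$ au lieu de $(a+h)_p$) sont valides mais ne changent rien à la structure de la preuve.
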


\begin{proof}
    Fixons $p\in\llbracket 0, h\rrbracket$. On écrit $Q_{n, i, p}(z) =: \sum_{j=0}^{(r+1)n/N} \gamma_{n, i, j}z^{Nj}$ pour $i\geq 1$. De l'équation \eqref{définitiondesQip} définissant les $Q_{n, i, p}$ on tire immédiatement 
    \begin{equation}
        \begin{split}
        \label{majorationdesgammaenfonctiondesc}
            &\bullet \;\; \gamma_{n, i, j} \in \Z, \\
            &\bullet \;\; \max_{{1\leq i \leq a+h}\atop{0\leq j\leq (r+1)n/N}} |\gamma_{n, i, j}| \leq (a+h)_p \max_{{1\leq i\leq a}\atop{0\leq j\leq n/N}} |c_{n, i, j}|.
        \end{split}
    \end{equation}
    
    Les fractions rationnelles $Q_{n, i, (p, k)}$, $Q_{n, 0, (p, k)}$ et $\Qbar_{n, 0, (p, k)}$ vérifient les équations de récurrence \eqref{récurrencevérifiéeparlesQipk}, qui sont exactement les mêmes que les équations $\eqref{definitiondesPki}$ et \eqref{recurrencedesPk0}. Ainsi, les propositions \sref{}{propositionexistenceetestimationdestheta} et \sref{}{propositionexistenceetestimationdesthetazéro} permettent d'écrire
    \begin{align*}
        z^{k-1}Q_{n, i, (p, k)}(z) &= \sum_{j=0}^{(r+1)n/N} \Bigg( \sum_{\ell=0}^{a+h-i} \theta_{a+h, (r+1)n, k, i, j, \ell} \; \gamma_{n, i+\ell, j} \Bigg) z^{Nj}, \\
        z^{k-1}(1-z)^{k-1}Q_{n, 0, (p, k)}(z) &= \sum_{t=0}^{(r+1)n+k-1} \Bigg( \sum_{\ell=0}^{a+h-1} \sum_{j=0}^{(r+1)n/N} \vartheta_{a+h, (r+1)n, k, 0, j, \ell, t} \; \gamma_{n, 1+\ell, j} \Bigg) z^t, \\
        z^{k-1}(1-z)^{k-1}\Qbar_{n, 0, (p, k)}(z) &= \sum_{t=0}^{(r+1)n+k-1} \Bigg( \sum_{\ell=0}^{a+h-1} \sum_{j=0}^{(r+1)n/N} \varthetabar_{a+h, (r+1)n, k, 0, j, \ell, t} \; \gamma_{n, 1+\ell, j} \Bigg) z^t,
    \end{align*}
    où
    \begin{align}
    \begin{split}
    \label{majorationtheta}
        &|\theta_{a+h, (r+1)n, k, i, j, \ell}| \leq k^{a+h} 2^{(r+1)n} (k-1)!, \\
        &|\vartheta_{a+h, (r+1)n, k, 0, j, \ell, t}|, \;|\varthetabar_{a+h, (r+1)n, k, 0, j, \ell, t}| \leq k^{a+h+1}8^{\max\big(k, (r+1)n\big)}(k-1)!,
    \end{split} \\
    \begin{split}
    \label{denomtheta}
        &\frac{\delta_{n, k}}{(k-1)!}\theta_{a+h, (r+1)n, k, i, j, \ell} \in \mathbb{Z}, \\
        &\frac{\delta_{n, k}}{(k-1)!}\vartheta_{a+h, (r+1)n, k, 0, j, \ell, t}, \;\;\frac{\delta_{n, k}}{(k-1)!}\varthetabar_{a+h, (r+1)n, k, 0, j, \ell, t} \in \mathbb{Z}.
    \end{split}
    \end{align}

    Pour $k$ borné par $\kappa n$, on déduit des équations \eqref{majorationdesgammaenfonctiondesc} et \eqref{majorationtheta} la majoration asymptotique 
    \begin{align*}
        |\lambda_{n, i, (p, k)}| &=  \frac{\delta_{n, k}}{(k-1)!}  \left|Q_{n, i, (p, k)}(-1)\right| \hspace{7.5cm}\text{\big(par \eqref{definitiondespetitslambdas}\big)}\\
        &\leq \frac{\delta_{n, k}}{(k-1)!} \Bigg(\sum_{j=0}^{(r+1)n/N}  \sum_{\ell=0}^{a+h-1} |\theta_{a+h, (r+1)n, k, i, j, \ell}| \Bigg) \max_{i, j} |\gamma_{n, i, j}| \\
        &\leq \frac{\delta_{n, k}}{(k-1)!}\Big((r+1)\frac{n}{N} + 1\Big)(a+h)k^{a+h}2^{(r+1)n}(k-1)! (a+h)_h \max_{i, j}|c_{n, i, j}| \\
        &\stackunder{$\leq$}{$\scriptscriptstyle{n\to+\infty}$} \Big( e^3(2a+1)\Big)^{\kappa n+o(n)} 2^{\kappa n} \xi^{n+o(n)} \hspace{1cm} \text{(\sref{proposition}{existencedescij}, \eqref{asymptotiquededelta} et $r+1 < \kappa$)} \\
        &\stackunder{$\leq$}{$\scriptscriptstyle{n\to+\infty}$} \Bigg(\Big(2e^3(2a+1)\Big)^\kappa \xi \Bigg)^{n+o(n)}.
    \end{align*}

    \medskip
    Par ailleurs, nous calculons les coefficients du polynôme $z^{k-1}Q_{n, 0, (p, k)}(z)$ comme produit de Cauchy entre $z^{k-1}(1-z)^{k-1}Q_{n, 0, (p, k)}(z)$ et $\frac{1}{(1-z)^{k-1}} = \sum_{t=0}^{+\infty} \binom{t+k-2}{t} z^t$. Pour $q\in\llbracket 0, (r+1)n\rrbracket$, le coefficient d'ordre $z^q$ de $z^{k-1}Q_{n, 0, (p, k)}(z)$ vaut
    \begin{equation}
    \label{coefficientsdupolynomezk-1Q0}
        \sum_{t=0}^q \binom{q-t+k-2}{q-t} \Bigg( \sum_{\ell=0}^{a+h-1} \sum_{j=0}^{(r+1)n/N} \vartheta_{a+h, (r+1)n, k, 0, j, \ell, t} \; \gamma_{n, 1+\ell, j} \Bigg).
    \end{equation}
    
    Par la définition \eqref{décompositiondesQ0}, le polynôme $Q_{0, n, (p, k)}^{<m>}$ possède pour tout $m\in\llbracket 0, N-1\rrbracket$ au plus $\frac{(r+1)n}{N}+1$ coefficients, qui sont tous de cette forme. Ces considérations étant également valables en remplaçant $Q$ par $\Qbar$, on a
    \begin{align*}
        |\lambda_{n, 0, (p, k)}^{<m>}| &\leq \frac{\delta_{n, k}}{(k-1)!} \bigg(\left|Q_{n, 0, (p, k)}^{<N-m>}(-1)\right| + \left|\Qbar_{n, 0, (p, k)}^{<m>}(-1)\right|\bigg) \hspace{4cm}\text{\big(par \eqref{definitiondespetitslambdas}\big)} \\
        &\leq \frac{\delta_{n, k}}{(k-1)!}2\Big(\frac{(r+1)n}{N}+1\Big) \\
        &\hspace{2.5cm} \times \sum_{t=0}^{(r+1)n} \Bigg( 2^{(r+1)n+k - 2}\bigg( \sum_{\ell=0}^{a+h-1} \sum_{j=0}^{(r+1)n/N} |\vartheta_{a+h, (r+1)n, k, 0, j, \ell, t}| \bigg)\Bigg) \max_{i, j}|\gamma_{n, i, j}| \\
        &\leq \frac{\delta_{n, k}}{(k-1)!}2\Big(\frac{(r+1)n}{N}+1\Big)\Big((r+1)n+1\Big)2^{(r+1)n+\kappa n-2}(a+h)\Big(\frac{(r+1)n}{N}+1\Big) \\
        &\hspace{6cm} \times k^{a+h+1}8^{\max\big(k, (r+1)n\big)}(k-1)!(a+h)_h \max|c_{n, i, j}| \\
        &\stackunder{$\leq$}{$\scriptscriptstyle{n\to+\infty}$} \Big( e^3(2a+1)\Big)^{\kappa n+o(n)} 2^{2\kappa n} 8^{\kappa n} \xi^{n+o(n)} \hspace{1cm} \text{(\sref{proposition}{existencedescij}, \eqref{asymptotiquededelta} et $r+1 < \kappa$)} \\
        &\stackunder{$\leq$}{$\scriptscriptstyle{n\to+\infty}$} \Bigg(\Big(32e^3(2a+1)\Big)^\kappa \xi \Bigg)^{n+o(n)}.
    \end{align*}

    Enfin, au vu de \eqref{coefficientsdupolynomezk-1Q0}, on déduit de \eqref{majorationdesgammaenfonctiondesc} et \eqref{denomtheta} que les polynômes $\frac{\delta_{n, k}}{(k-1)!}z^{k-1}Q_{n, i, (p, k)}(z)$, $\frac{\delta_{n, k}}{(k-1)!}z^{k-1}Q_{n, 0, (p, k)}(z)$ et $\frac{\delta_{n, k}}{(k-1)!}z^{k-1}\Qbar_{n, 0, (p, k)}(z)$ sont dans $\Z[z]$, d'où $(ii)$.
\end{proof}

\subsection{Estimation de la combinaison}
\label{paragrapheestimationdelacombinaison}

On rappelle que la définition du symbole de Pochhammer $(x)_j$ et quelques-unes de ses propriétés sont données par \eqref{Pochhammer}. 

\begin{proposition}
\label{majorationdescombinaisons}
    On suppose que $r \geq 2$. Alors pour $n$ assez grand et tout $(p, k)\in\llbracket 0, h\rrbracket \times \llbracket 2rn+2, \kappa n\rrbracket$, on a
    \begin{equation*}
        \left| \Lambda_{n, (p, k)}\right| \leq \alpha^{n+o(n)},
    \end{equation*}
    avec
    \begin{equation*}
        \alpha := \frac{1}{r^\Omega} \Big(e^4 (2a+1)\Big)^\kappa \xi.
    \end{equation*}
\end{proposition}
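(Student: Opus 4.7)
The strategy is to bound $|\widetilde\Lambda_{n,(p,k)}(-1)|$ via the series representations of $S_{n,p}^{[\infty](k-1)}$ and $S_{n,p}^{[0](k-1)}$ evaluated on the unit circle, then multiply by the prefactor $\delta_{n,k}/(k-1)!$. Since $|\hat\chi(\ell)|\le 1$ and each point $\mu^\ell(-1)$ has modulus $1$, from \eqref{définitiondeLambdapk} we reduce to bounding $|(1/z)^{k-1}S_{n,p}^{[\infty](k-1)}(1/z)|$ and $|z^{k-1}S_{n,p}^{[0](k-1)}(z)|$ at $|z|=1$. Differentiating \eqref{définitiondessériesS} term by term (legitimate since $|F_n^{(p)}(\pm t)|=O(t^{-\omega n-p})$ and $k\le\kappa n<\omega n$) yields the explicit expansions
\begin{align*}
    (1/z)^{k-1}S_{n,p}^{[\infty](k-1)}(1/z) &= (-1)^{k-1}\sum_{u\ge 1}F_n^{(p)}(u+rn)(u)_{k-1}z^u, \\
    z^{k-1}S_{n,p}^{[0](k-1)}(z) &= \sum_{u\ge 1}F_n^{(p)}(-(u+rn))(u+2rn-k+2)_{k-1}z^{u+2rn}.
\end{align*}
The triangle inequality at $|z|=1$ reduces the problem to bounding the absolute series $\Sigma^{[\infty]}:=\sum_{u\ge 1}|F_n^{(p)}(u+rn)|(u)_{k-1}$ and $\Sigma^{[0]}:=\sum_{u\ge 1}|F_n^{(p)}(-(u+rn))|\,|(u+2rn-k+2)_{k-1}|$.

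To exploit the hypothesis $F_n(t)=\mathcal{O}(t^{-\omega n})$ sharply, I use the Taylor expansion at infinity $F_n^{(p)}(t)=(-1)^p\sum_{k'\ge\omega n}(k')_p\mathfrak{A}_{n,k'}t^{-k'-p}$, valid for $|t|>n$. The bound on $|\mathfrak{A}_{n,k'}|$ from Proposition~\ref{existencedescij}(ii) covers the range $\omega n\le k'<\Omega n$; for the tail $k'\ge\Omega n$ I derive from the explicit formula \eqref{expressionexplicitedesafrak} the uniform bound $|\mathfrak{A}_{n,k'}|\le(k')^{a-1}n^{k'-1}\xi^{n+o(n)}$. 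Summing both contributions, and using $|t|\ge rn>n$ thanks to $r\ge 2$, I obtain the sharp estimate
$$|F_n^{(p)}(\pm t)|\le n^{\omega n+o(n)}\cdot r^{-(\Omega-\omega)n}\cdot\xi^{n+o(n)}\cdot|t|^{-\omega n-p}.$$

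Substituting this into $\Sigma^{[\infty]}$ and $\Sigma^{[0]}$ (using $(u+2rn-k+2)_{k-1}\le (2(u+rn))^{k-1}$ to simplify the latter) reduces the problem to estimating the arithmetic sums $\sum_u(u)_{k-1}(u+rn)^{-\omega n-p}$ and $\sum_u(u+rn)^{k-1-\omega n-p}$. These can be evaluated via their Mellin/integral representations $\frac{(k-1)!}{(m-1)!}\int_0^\infty s^{m-1}e^{-rns}(1-e^{-s})^{-k}\,ds$ (with $m=\omega n+p$) and a saddle-point analysis. The hypotheses $r\ge 2$ and $2r<\kappa<\omega$ ensure that the integrals converge and that the dominant contribution lies in the regime producing exactly the factor $r^{-\Omega n}$; after Stirling one gets $\Sigma^{[\infty]},\,\Sigma^{[0]}\le(k-1)!\cdot r^{-\Omega n}\cdot e^{\kappa n+o(n)}\cdot\xi^{n+o(n)}$. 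Multiplying by $\delta_{n,k}/(k-1)!$ and using $\delta_{n,k}\le(e^3(2a+1))^{\kappa n+o(n)}$ from \eqref{asymptotiquededelta} yields the bound $|\Lambda_{n,(p,k)}|\le \alpha^{n+o(n)}$.

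The main technical obstacle is the saddle-point estimate of these two arithmetic sums with sufficient precision to match the constant $e^4(2a+1)$, and not a larger one: this requires carefully tracking the cancellations between the factor $n^{\omega n}$ coming from $|F_n^{(p)}|$, the factor $(rn)^{-\omega n}$ coming from evaluation of the Mellin integrand at the saddle, the factor $(k-1)!$ from the generating function identity $\sum_{u\ge 1}(u)_{k-1}x^u=(k-1)!x/(1-x)^k$, and the Stirling factor $e^{\kappa n}/(\kappa n)^{\kappa n}$ arising from $1/(k-1)!$.
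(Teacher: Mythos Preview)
Your overall strategy coincides with the paper's: expand $F_n^{(p)}(t)=(-1)^p\sum_{q\ge\omega n}(q)_p\mathfrak{A}_{n,q}t^{-q-p}$, split the $q$-range at $\Omega n$ (bounding via Proposition~\ref{existencedescij}$(ii)$ below $\Omega n$ and via the explicit formula \eqref{expressionexplicitedesafrak} above), and reduce everything to a single arithmetic sum over $t$. The only difference lies in how that last sum is handled. The paper avoids Mellin integrals and saddle points entirely: after writing $(u)_{k-1}=(k-1)!\binom{u+k-2}{k-1}$, the factorial cancels \emph{algebraically} against the $1/(k-1)!$ in the definition of $\Lambda_{n,(p,k)}$; then the elementary inequalities $t-rn+k-2\le\tfrac{k-1}{rn}\,t$ (for $S^{[\infty]}$) and $rn+t\le\tfrac{(k-1)/(rn)}{(k-1)/(rn)-1}\,t$ (for $S^{[0]}$, using $k\ge 2rn+2$), combined with the one-sided Stirling bound $(k-1)!\ge\big((k-1)/e\big)^{k-1}$, give directly $\sum_t\binom{\cdots}{k-1}(rn/t)^{\omega n}\le\tfrac{\pi^2(rn)^2}{6}\,e^{k-1}$, which is exactly the origin of the extra factor $e^\kappa$ in $\alpha$. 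Your Mellin/saddle-point route would also reach the goal but is heavier and remains only sketched; in particular, your closing remark about a ``Stirling factor $e^{\kappa n}/(\kappa n)^{\kappa n}$ arising from $1/(k-1)!$'' misidentifies the mechanism---no Stirling asymptotics are needed for that cancellation, only the identity $(u)_{k-1}=(k-1)!\binom{u+k-2}{k-1}$.
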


\begin{proof}
    Nous commençons par majorer $S_{n, p}^{[\infty]}(z)$ pour $|z|\geq 1$, $z\neq 1$. On rappelle que cette quantité est définie par l'équation \eqref{définitiondessériesS}. 

    En dérivant $p$ fois les deux membres de \eqref{Taylorinfinidefn}, on a l'expresion
    \begin{equation*}
        F_n^{(p)}(t) = \sum_{q=1}^{+\infty} \frac{(-1)^p(q)_p\mathfrak{A}_{n, q}}{t^{q+p}},
    \end{equation*}
    d'où
    \begin{equation*}
        S_{n, p}^{[\infty]}(z) = \sum_{t=rn+1}^{+\infty} \sum_{q=1}^{+\infty} \frac{(-1)^p (q)_p \mathfrak{A}_{n, q}}{t^{q+p}} z^{rn-t}.
    \end{equation*}
    D'après la \sref{proposition}{existencedescij} $(i)$, on a $\mathfrak{A}_{n, q} = 0$ pour $q\leq \omega n-1$, si bien que la somme intérieure commence en réalité à $q=\omega n$. On en déduit que
    \begin{align*}
        \frac{\delta_{n, k}}{(k-1)!}S_{n, p}^{[\infty](k-1)}(z) &= \delta_{n, k} \sum_{t=rn+1}^{+\infty} \sum_{q=\omega n}^{+\infty} \frac{(-1)^p (q)_p \mathfrak{A}_{n, q}}{t^{q+p}}\frac{(rn-t-k+2)_{k-1}}{(k-1)!}z^{rn-t-k+1} \\
                                                    &= (-1)^{k-1}\delta_{n, k} \sum_{t=rn+1}^{+\infty} \sum_{q=\omega n}^{+\infty} \frac{(-1)^p (q)_p \mathfrak{A}_{n, q}}{t^{q+p}} \binom{t-rn+k-2}{k-1} z^{rn-t-k+1}.
    \end{align*}
    Puisque $\frac{1}{t^p} \leq 1$ et $|z^{rn-t-k+1}| \leq 1$ lorsque $t\geq rn+1$, on peut écrire
    \begin{align}
    \label{majoration1}
        \left| \frac{\delta_{n, k}}{(k-1)!}S_{n, p}^{[\infty](k-1)}(z) \right| &\leq \delta_{n, k} \sum_{t=rn+1}^{+\infty} \binom{t-rn+k-2}{k-1} \left(\frac{n}{t}\right)^{\omega n} \sum_{q=\omega n}^{+\infty} \frac{(q)_p|\mathfrak{A}_{n, q}|}{t^q} \left(\frac{n}{t}\right)^{-\omega n} \nonumber \\ 
                                                                    &\leq \delta_{n, k} \sum_{t=rn+1}^{+\infty} \binom{t-rn+k-2}{k-1} \left(\frac{n}{t}\right)^{\omega n} \sum_{q=\omega n}^{+\infty} u_{n, t, q},
    \end{align}
    où les expressions
    \begin{align*}
        u_{n, t, q} &\stackunder{$=$}{$\scriptscriptstyle{n\to+\infty}$} \frac{1}{r^{(\Omega - \omega)n}} (q)_p q^a \left(\frac{rn}{t}\right)^{q-\omega n} \xi^{n+o(n)}, &\omega n\leq q\leq\Omega n -1,\\
        u_{n, t, q} &\stackunder{$=$}{$\scriptscriptstyle{n\to+\infty}$} (q)_p q^a \left(\frac{n}{t}\right)^{q-\omega n} \xi^{n+o(n)}, &q\geq \Omega n,
    \end{align*}
    sont obtenues en utilisant la majoration de la \sref{proposition}{existencedescij} $(iii)$ pour $\omega n \leq q \leq \Omega n - 1$ et en majorant l'expression explicite \eqref{expressionexplicitedesafrak} de $|\mathfrak{A}_{n, q}|$ par $n^qq^a\max_{i, j}|c_{n, i, j}|$ pour $q\geq\Omega n$.

    D'une part, on a pour $t \geq rn+1$
    \begin{align*}
        \sum_{q=\omega n}^{\Omega n-1} u_{n, t, q} &\stackunder{$\leq$}{$\scriptscriptstyle{n\to+\infty}$}  \sum_{q=\omega n}^{\Omega n-1} \frac{1}{r^{(\Omega - \omega)n}} (\Omega n)_p (\Omega n)^a \xi^{n+o(n)} \\
                                                &\stackunder{$\leq$}{$\scriptscriptstyle{n\to+\infty}$} (\Omega - \omega)n \frac{1}{r^{(\Omega - \omega)n}} (\Omega n)_p (\Omega n)^a \xi^{n+o(n)}.
    \end{align*}
    D'autre part, pour $t=rn+1$ et $q\geq\Omega n$, on écrit le quotient
    \begin{equation}
    \label{nassezgrand1}
        \frac{u_{n, t, q+1}}{u_{n, t, q}} = \frac{n}{t}\left(1+\frac{1}{q}\right)^a\left(1+\frac{p}{q}\right) \leq \frac{1}{r}\left(1+\frac{1}{q}\right)^a\left(1+\frac{p}{q}\right).
    \end{equation}
    Pour $n$ (et donc $q$) assez grand, ce quotient est majoré par $\frac{3}{2r}$ . Puisque $r\geq 2$, ce quotient est donc majoré par $\frac{3}{4}$ pour $n$ assez grand. Il s'ensuit que
    \begin{align*}
        \sum_{q=\Omega n}^{+\infty} u_{n, t, q} &\leq u_{n, t, \Omega n}\sum_{q=\Omega n}^{+\infty} \left(\frac{3}{4}\right)^{q-\Omega n} \\
                                            &\stackunder{$\leq$}{$\scriptscriptstyle{n\to+\infty}$} \frac{4}{r^{(\Omega - \omega)n}} (\Omega n)_p (\Omega n)^a \xi^{n+o(n)}.
    \end{align*}   

    Ainsi, l'inégalité \eqref{majoration1} devient
    \begin{align}
    \label{majoration2}
        \left| \frac{\delta_{n, k}}{(k-1)!}S_{n, p}^{[\infty](k-1)}(z) \right| &\stackunder{$\leq$}{$\scriptscriptstyle{n\to+\infty}$} \delta_{n, k} \sum_{t=rn+1}^{+\infty} \binom{t-rn+k-2}{k-1} \left(\frac{n}{t}\right)^{\omega n} \nonumber \\
                                                                    &\hspace{4.5cm} \times\Big((\Omega-\omega)n + 4\Big)\frac{1}{r^{(\Omega-\omega)n}} (\Omega n)_p(\Omega n)^a\xi^{n+o(n)} \nonumber \\
                                                                    &\stackunder{$\leq$}{$\scriptscriptstyle{n\to+\infty}$} \delta_{n, k} \frac{(\Omega n + p + 4)^{a+p+1}}{r^{\Omega n}} \xi^{n+o(n)} \sum_{t=rn+1}^{+\infty} \binom{t-rn+k-2}{k-1} \left(\frac{rn}{t}\right)^{\omega n}. 
    \end{align}
Pour majorer cette dernière somme, remarquons que pour $t \geq rn+1$ on a
\begin{equation*}
    t-rn+k-2 \leq t +rn\left(\frac{k-1}{rn}-1\right) \leq  t+t\left(\frac{k-1}{rn}-1\right) = \frac{k-1}{rn}t,
\end{equation*}
si bien qu'en utilisant la minoration $(k-1)! \geq \left(\frac{k-1}{e}\right)^{k-1}$, on obtient
\begin{align*}
    \sum_{t=rn+1}^{+\infty} \binom{t-rn+k-2}{k-1} \left(\frac{rn}{t}\right)^{\omega n} &\leq \sum_{t=rn+1}^{+\infty} \frac{\left(\frac{k-1}{rn}t\right)^{k-1}}{(k-1)!}\left(\frac{rn}{t}\right)^{\omega n} \\
                                                                                        &\leq \sum_{t=rn+1}^{+\infty} \frac{e^{k-1}\left(\frac{k-1}{rn}\right)^{k-1}(rn)^{k-1}}{(k-1)^{k-1}}\left(\frac{rn}{t}\right)^{\omega n - k - 1}\left(\frac{rn}{t}\right)^2 \\
                                                                                        &\leq \frac{\pi^2(rn)^2}{6}e^{k-1} \hspace{1.2cm} \text{(car $\omega n-k-1>\kappa n -k-1 \geq -1$).}
\end{align*}
Finalement, l'inégalité \eqref{majoration2} devient, en utilisant \eqref{asymptotiquededelta},
\begin{align}
\label{majoration3}
    \left| \frac{\delta_{n, k}}{(k-1)!}S_{n, p}^{[\infty](k-1)}(z) \right| &\stackunder{$\leq$}{$\scriptscriptstyle{n\to+\infty}$} \delta_{n, k} \frac{(\Omega n + p + 4)^{a+p+1}}{r^{\Omega n}} \xi^{n+o(n)} \frac{\pi^2(rn)^2}{6}e^{\kappa n-1} \nonumber \\
                                                                &\stackunder{$\leq$}{$\scriptscriptstyle{n\to+\infty}$} \Bigg( \frac{1}{r^\Omega}\Big(e^4(2a+1)\Big)^\kappa \xi \Bigg)^{n+o(n)}.
\end{align}

\medskip
Maintenant, nous majorons $S_{n, p}^{[0]}(z)$ pour $|z|\leq 1$, $z\neq 1$. On rappelle que cette quantité est définie par l'équation \eqref{définitiondessériesS}. Puisque $\mathfrak{A}_{n, q} = 0$ pour $q \leq \omega n-1$, on a
\begin{equation*}
    S_{n, p}^{[0]}(z) = \sum_{t=rn+1}^{+\infty} \sum_{q=\omega n}^{+\infty} \frac{(-1)^q(q)_p\mathfrak{A}_{n, q}}{t^{q+p}}z^{rn+t}.
\end{equation*}
Bien sûr, le monôme $z^{rn+t}$ donne un terme nul lorsqu'il est dérivé au moins $rn+t+1$ fois, d'où pour $k\geq 2rn+2$ :
\begin{align*}
    \frac{\delta_{n, k}}{(k-1)!}S_{n, p}^{[0](k-1)}(z) &= \delta_{n, k} \sum_{t=k-rn-1}^{+\infty} \sum_{q=\omega n}^{+\infty} \frac{(-1)^q(q)_p\mathfrak{A}_{n, q}}{t^{q+p}} \frac{(rn+t-k+2)_{k-1}}{(k-1)!}z^{rn+t-k+2} \\
                                        &= \delta_{n, k} \sum_{t=k-rn-1}^{+\infty} \sum_{q=\omega n}^{+\infty} \frac{(-1)^q(q)_p\mathfrak{A}_{n, q}}{t^{q+p}} \binom{rn+t}{k-1}z^{rn+t-k+2}.
\end{align*}
Puisque $\frac{1}{t^p}\leq 1$ et $|z^{rn+t-k+1}|\leq 1$ lorsque $t\geq k-rn-1$, on peut écrire
\begin{align}
\label{majoration4}
    \left| \frac{\delta_{n, k}}{(k-1)!}S_{n, p}^{[0](k-1)}(z) \right| &\leq \delta_{n, k} \sum_{t=k-rn-1}^{+\infty} \binom{rn+t}{k-1} \left(\frac{n}{t}\right)^{\omega n} \sum_{q=\omega n}^{+\infty} \frac{(q)_p|\mathfrak{A}_{n, q}|}{t^q}\left(\frac{n}{t}\right)^{-\omega n} \nonumber \\
                                                        &\leq \delta_{n, k} \sum_{t=k-rn-1}^{+\infty} \binom{rn+t}{k-1} \left(\frac{n}{t}\right)^{\omega n} \sum_{q=\omega n}^{+\infty} u_{n, t, q} \nonumber \\
                                                        &\stackunder{$\leq$}{$\scriptscriptstyle{n\to+\infty}$} \delta_{n, k} \frac{(\Omega n +p+4)^{a+p+1}}{r^{\Omega n}} \xi^{n+o(n)} \sum_{t=k-rn-1}^{+\infty} \binom{rn+t}{k-1} \left(\frac{rn}{t}\right)^{\omega n}.
\end{align}

Pour majorer cette dernière somme, remarquons que pour $t \geq k-rn-1$ on a
\begin{equation*}
    rn+t\leq \frac{1}{\frac{k-1}{rn}-1}t +t \leq \frac{\frac{k-1}{rn}}{\frac{k-1}{rn}-1}t,
\end{equation*}
si bien qu'en utilisant $(k-1)!\geq \left(\frac{k-1}{e}\right)^{k-1}$, on obtient
\begin{align*}
    \sum_{t=k-rn-1}^{+\infty} \binom{rn+t}{k-1} \left(\frac{rn}{t}\right)^{\omega n} &\leq \sum_{t=k-rn-1}^{+\infty} \frac{e^{k-1}\left(\frac{k-1}{rn}\right)^{k-1}(rn)^{k-1}}{(k-1)^{k-1}\left(\frac{k-1}{rn}-1\right)^{k-1}}\left(\frac{rn}{t}\right)^{\omega n-k-1}\left(\frac{rn}{t}\right)^2 \\
                                                                                    &\leq \frac{\pi^2(rn)^2}{6}\left(\frac{e}{\frac{k-1}{rn} -1}\right)^{k-1} \hspace{0.7cm} \text{(car $\omega n-k-1>\kappa n -k-1> -1$)}\\
                                                                                    &\leq \frac{\pi^2(rn)^2}{6}e^{k-1} \hspace{2.8cm} \text{(car $\frac{k-1}{rn} \geq 2$).}
\end{align*}
Ainsi, l'inégalité \eqref{majoration4} devient
\begin{align}
\label{majoration5}
    \left| \frac{\delta_{n, k}}{(k-1)!}S_{n, p}^{[0](k-1)}(z) \right| &\stackunder{$\leq$}{$\scriptscriptstyle{n\to+\infty}$} \delta_{n, k} \frac{(\Omega n +p+4)^{a+p+1}}{r^{\Omega n}} \xi^{n+o(n)} \frac{\pi^2(rn)^2}{6}e^{\kappa n-1} \nonumber \\
                                                        &\stackunder{$\leq$}{$\scriptscriptstyle{n\to+\infty}$} \Bigg( \frac{1}{r^\Omega}\Big(e^4(2a+1)\Big)^\kappa \xi \Bigg)^{n+o(n)}.
\end{align}

\medskip
Pour finir, rappelons que $\widetilde{\Lambda}_{n, (p, k)}(z)$ et $\Lambda_{n, (p, k)}$ sont définis respectivement par les équations \eqref{définitiondeLambdapk} et \eqref{definitiondelambdafinal}, si bien que
\begin{align*}
    \Lambda_{n, (p, k)} &= \frac{\delta_{n, k}}{(k-1)!}(-1)^{k-1}\widetilde{\Lambda}_{n, (p, k)}(-1) \\
    &= (-1)^{k-1}\sum_{\ell=0}^{N-1} \hat{\chi}(\ell) \Bigg[ \frac{(-\mu^{-\ell})^{k-1}\delta_{n, k}}{(k-1)!}S_{n, p}^{[\infty](k-1)}(-\mu^{-\ell})+(-1)^\varepsilon \frac{(-\mu^\ell)^{k-1}\delta_{n, k}}{(k-1)!} S_{n, p}^{[0](k-1)}(-\mu^\ell)\Bigg].
\end{align*}
Les majorations \eqref{majoration3} et \eqref{majoration5} étant valables pour tout complexe $z\neq 1$ de module $1$, on obtient
\begin{align*}
    \left| \Lambda_{n, (p, k)} \right| &\stackunder{$\leq$}{$\scriptscriptstyle{n\to+\infty}$}  \max_{0\leq\ell\leq N-1}|\hat{\chi}(\ell)|\cdot2N \Bigg( \frac{1}{r^\Omega}\Big(e^4(2a+1)\Big)^\kappa \xi \Bigg)^{n+o(n)} \\
                                                                        &\stackunder{$\leq$}{$\scriptscriptstyle{n\to+\infty}$} \Bigg( \frac{1}{r^\Omega}\Big(e^4(2a+1)\Big)^\kappa \xi \Bigg)^{n+o(n)}.
\end{align*}
\end{proof}

\section{Application d'un lemme de zéros}
\label{partie6}

Dans cette section, nous vérifions que les combinaisons linéaires $\Lambda_{n, (p, k)}$ construites dans la \autoref{paragraphenotationspourlasuite} vérifient l'hypothèse $(iii)$ du critère d'indépendance linéaire donné par la \sref{proposition}{critèred'indépendancelinéaire}. 

\medskip
Dans la \autoref{paragrapheénoncédeShidlovskii}, nous énonçons un "lemme de Shidlovskii" dans un contexte général.

Dans la \autoref{paragraphestructureshidlovskii}, nous énonçons la \sref{proposition}{propositiondéduitedeshidlovskii} qui affirme que l'hypothèse $(iii)$ est satisfaite par les $\Lambda_{n, (p, k)}$. Nous nous attachons à la démontrer dans le reste de la \autoref{partie6}. Nous donnons à la fin de la \autoref{paragraphestructureshidlovskii} le plan des sous-sections suivantes, après avoir exposé la stratégie de démonstration de la \sref{proposition}{propositiondéduitedeshidlovskii}.

\subsection{Énoncé d'un ``lemme de Shidlovskii"}

\label{paragrapheénoncédeShidlovskii}

Nous donnons ici le contexte et l'énoncé d'un ``lemme de Shidlovskii" que nous appliquerons dans la \autoref{appplicationdeShidlovskii} afin de nous assurer que nous avons construit suffisamment de combinaisons linéaires indépendantes les unes des autres. Ce terme générique désigne un résultat basé sur les idées de Shidlovskii \cite{shidlovskii89} et généralisé à plusieurs reprises, notamment par Bertrand et Beukers \cite{bertrandbeukers85}, Bertrand \cite{bertrand2012} et Fischler \cite{fischler2018}.

On se donne un entier $d\geq 1$, une matrice $A \in M_d\big(\C(z)\big)$ et des polynômes $S_1, ..., S_{d}\in\C[X]$ de degré au plus $\Delta$. À chaque solution $Y = \prescript{t}{}{(}y_1, ..., y_{d})$ du système différentiel $Y' = AY$, on associe le reste
\begin{equation*}
    \rho(Y)(z) := \sum_{i = 1}^d S_{i}(z)y_i(z).
\end{equation*}

On rappelle (voir par exemple \cite[Theorem 6.6, p.181]{coddingtoncarlson97}) qu'en tout point singulier régulier $\sigma\in\C\cup\{\infty\}$ de $A$, le système différentiel $Y' = AY$ admet une base de solutions locales qui sont dans la classe de Nilsson en $\sigma$, c'est-à-dire qui peuvent s'écrire comme une somme finie
\begin{equation}
    f(z) = \sum_{e\in\C}\sum_{s\in\N} f_{e, s}(z)(z-\sigma)^e \log(z-\sigma)^s
\end{equation}
où $f_{e, s}$ est holomorphe et ne s'annule pas en $\sigma$. Dans le cas $\sigma = \infty$, $``(z - \infty)"$ s'entend comme $\frac{1}{z}$.

Si $f$ est non nulle, on définit son ordre en $\sigma$, noté $\mathrm{ord}_\sigma(f)$, comme le minimum de l'ensemble fini $\big\{\mathfrak{Re}(e) \;\;\big|\;\; \exists s\in\N \;\;\; f_{e, s} \neq 0 \big\}$. En particulier, les éventuels facteurs logarithmiques n'influent pas sur l'ordre de $f$ en $\sigma$.

Dans ce contexte, on a l'inégalité suivante dont une démonstration est disponible dans \cite[Theorem 3.1]{fischler2018}.

\begin{theorem}
\label{Shidlovskii}
    (``Lemme de Shidlovskii").
    Soit $\Sigma$ un ensemble fini de points de $\C\cup\{\infty\}$. Pour chaque $\sigma\in\Sigma$, soit $(Y_j)_{j\in J_\sigma}$ une famille finie de solutions de $Y' = AY$ dans la classe de Nilsson en $\sigma$ telle que les restes $\big(\rho(Y_j)\big)_{j\in J_\sigma}$ soient des fonctions $\C$-linéairement indépendantes. 

    Il existe une constante $c_1$ ne dépendant que de $A$ et de $\Sigma$ telle que pour tout $L\in \C(z)[\frac{d}{dz}]$ d'ordre $\nu$ annulant tous les restes $\rho(Y_j)$, $j\in J_\sigma, \sigma\in\Sigma$, on ait
    \begin{equation*}
        \sum_{\sigma\in\Sigma}\sum_{j\in J_\sigma} \mathrm{ord}_\sigma \rho(Y_j) \leq (\Delta+1)(\nu - \#J_\infty) + c_1.
    \end{equation*}
\end{theorem}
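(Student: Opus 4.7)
Le plan est d'adapter les idées classiques de Shidlovskii en combinant une analyse locale des restes avec un argument wronskien global.

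\textbf{Normalisation.} Pour chaque $\sigma \in \Sigma$, j'exploiterai d'abord l'indépendance $\mathbb{C}$-linéaire de $(\rho(Y_j))_{j \in J_\sigma}$ pour supposer, après recombinaison linéaire dans cette famille, que les ordres $\mathrm{ord}_\sigma \rho(Y_j)$ sont deux à deux distincts. En effet, si deux restes partageaient le même monôme de Nilsson dominant $(z-\sigma)^e \log(z-\sigma)^s$, une combinaison linéaire convenable l'annulerait et produirait un reste d'ordre strictement supérieur ; l'indépendance linéaire garantit que ce processus se termine au bout d'un nombre fini d'itérations, préservant à chaque étape la propriété $L\rho(Y_j) = 0$.

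\textbf{Analyse locale de $\ker L$.} L'opérateur $L$ étant d'ordre $\nu$, son noyau en tout $\sigma$ est un espace de Nilsson de dimension $\nu$ dont les exposants locaux $\alpha_1^\sigma, \ldots, \alpha_\nu^\sigma$ (ordonnés par partie réelle croissante) sont les racines de l'équation indicielle de $L$ en $\sigma$. Chaque $\rho(Y_j)$ étant annulé par $L$, les $\#J_\sigma$ valeurs distinctes $\mathrm{ord}_\sigma \rho(Y_j)$, $j \in J_\sigma$, sont toutes de la forme $\mathfrak{Re}(\alpha_i^\sigma)$, d'où
\begin{equation*}
    \sum_{j \in J_\sigma} \mathrm{ord}_\sigma \rho(Y_j) \;\leq\; \mathfrak{Re}(\alpha_{\nu - \#J_\sigma + 1}^\sigma) + \cdots + \mathfrak{Re}(\alpha_\nu^\sigma).
\end{equation*}

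\textbf{Sommation globale et rôle du facteur $\Delta+1$.} Je sommerai ensuite sur $\sigma \in \Sigma$ et complèterai par la relation de Fuchs (équivalente à l'identité d'Abel--Liouville pour le wronskien d'une base de $\ker L$), qui fournit une expression globale de $\sum_\tau \sum_{i=1}^\nu \mathfrak{Re}(\alpha_i^\tau)$ sur $\mathbb{P}^1$. Les singularités $\tau$ éventuellement introduites par $L$ en dehors de $\Sigma$ sont régulières pour le système $Y' = AY$, si bien que les exposants qu'elles contribuent sont bornés uniformément en fonction de $A$ et $\Sigma$ uniquement, fournissant la constante $c_1$. Enfin, le facteur $(\Delta+1)(\nu - \#J_\infty)$ provient du contrôle spécifique en $\infty$ : chaque $S_i$ étant de degré au plus $\Delta$, l'ordre en $\infty$ d'un reste $\rho(Y)$ est contraint (à un décalage d'au plus $\Delta+1$ près) par celui de $Y$ comme solution de $Y' = AY$, et les $\nu - \#J_\infty$ exposants $\alpha_i^\infty$ non capturés par la famille $J_\infty$ contribuent chacun jusqu'à $\Delta + 1$ dans le bilan final.

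\textbf{Obstacle principal.} La difficulté essentielle réside dans le traitement rigoureux des termes logarithmiques : lorsque l'équation indicielle de $L$ admet des racines multiples, les solutions locales font intervenir des puissances de $\log(z - \sigma)$, ce qui oblige à manipuler des multi-ensembles d'exposants avec leurs multiplicités et à adapter l'argument wronskien en conséquence (notamment pour la comptabilité des blocs de Jordan dans le développement local). Ce formalisme est développé dans \cite[Theorem 3.1]{fischler2018}, et c'est l'approche que je suivrais pour compléter la démonstration.
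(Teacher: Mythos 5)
L'article ne démontre pas ce théorème : il renvoie explicitement à \cite[Theorem 3.1]{fischler2018} pour la preuve. Votre proposition se conclut elle aussi par un renvoi à cette même référence ; il n'y a donc pas de preuve interne à comparer, et j'évalue votre esquisse comme tentative autonome.

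En tant que telle, elle comporte une lacune sérieuse à l'étape de sommation globale. Vous traitez l'opérateur $L$ comme s'il était fuchsien, à singularités et exposants contrôlés par $A$ et $\Sigma$. Or $L$ est un opérateur \emph{quelconque} d'ordre $\nu$ annulant les restes : en le multipliant à gauche par un opérateur arbitraire, on peut lui imposer, aux points mêmes de $\Sigma$ ou ailleurs, des exposants aussi négatifs que l'on veut et des singularités supplémentaires qui ne sont pas apparentes. La relation de Fuchs appliquée à $L$ ne fournit alors aucune borne : pour conclure, il faudrait minorer les $\nu-\#J_\sigma$ exposants non capturés en chaque $\sigma\in\Sigma$ et en $\infty$, et cette minoration ne peut pas venir de $L$ lui-même. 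Votre affirmation selon laquelle les singularités introduites par $L$ hors de $\Sigma$ seraient régulières pour le système $Y'=AY$ et contribueraient donc de façon bornée confond les singularités de $L$ avec celles de $A$ : elles n'ont a priori rien à voir. Le c\oe ur de l'argument de Shidlovskii, absent de votre esquisse, consiste précisément à contourner $L$ : on montre que le noyau de l'opérateur minimal annulant les restes est engendré par des restes $\rho(Y)$ de solutions $Y$ du système, puis on mène le bilan global non pas sur les exposants de $L$, mais sur une fraction rationnelle explicite (un déterminant de type wronskien formé des polynômes dérivés $S_{i,k}$), dont la somme des ordres sur $\C\cup\{\infty\}$ est nulle et dont l'ordre à l'infini est contrôlé par $\deg S_i\leq\Delta$ ; c'est de là que sortent le facteur $(\Delta+1)(\nu-\#J_\infty)$ ainsi que les minorants $r_\sigma$ des ordres généralisés du \emph{système} qui apparaissent dans la constante $c_1$ explicitée dans la remarque en fin de \autoref{partie6}. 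Votre dernier paragraphe identifie correctement que $\Delta+1$ mesure le décalage entre $\mathrm{ord}_\infty Y$ et $\mathrm{ord}_\infty\rho(Y)$, mais il présuppose exactement l'étape manquante, à savoir que les solutions de $L$ pertinentes pour le décompte en $\infty$ sont elles-mêmes des restes de solutions du système.
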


\begin{corollary}
\label{corollairedeShidlovskii}
    Dans le contexte précédent, il existe une constante $c_1$ ne dépendant que de $A$ et de $\Sigma$ telle que
    \begin{equation*}
        \sum_{\sigma\in\Sigma}\sum_{j\in J_\sigma} \mathrm{ord}_\sigma \rho(Y_j) \leq (\Delta+1)(d - \#J_\infty) + c_1,
    \end{equation*}
    où $d$ est tel que $A \in M_d\big(\C(z)\big)$.
\end{corollary}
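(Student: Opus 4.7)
Le plan est d'appliquer le \sref{théorème}{Shidlovskii} à un opérateur différentiel $L\in\C(z)[\frac{d}{dz}]$ bien choisi, d'ordre $\nu\leq d$, annulant tous les restes $\rho(Y_j)$. L'inégalité du corollaire suivra immédiatement en utilisant $\nu-\#J_\infty\leq d-\#J_\infty$ et $\Delta+1\geq 0$.

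Pour construire un tel $L$, on considère la suite de vecteurs lignes $S^{(k)}\in\C(z)^d$ définie par $S^{(0)}:=(S_1,\ldots,S_d)$ et la récurrence $S^{(k+1)}:=(S^{(k)})'+S^{(k)}A$. Une récurrence immédiate sur $k$, utilisant la relation $Y'=AY$, montre que pour toute solution $Y$ du système différentiel, on a l'identité
\begin{equation*}
    \rho(Y)^{(k)}=S^{(k)}Y.
\end{equation*}

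Puisque $\C(z)^d$ est un $\C(z)$-espace vectoriel de dimension $d$, la famille $(S^{(0)},S^{(1)},\ldots,S^{(d)})$ de $d+1$ vecteurs est $\C(z)$-linéairement dépendante : il existe $\ell_0,\ldots,\ell_d\in\C(z)$ non tous nuls tels que $\sum_{k=0}^d\ell_k S^{(k)}=0$. L'opérateur $L:=\sum_{k=0}^d\ell_k\bigl(\frac{d}{dz}\bigr)^k$ est alors un élément non nul de $\C(z)[\frac{d}{dz}]$ d'ordre $\nu\leq d$, et annule $\rho(Y)$ pour toute solution $Y$ du système, donc a fortiori tous les restes $\rho(Y_j)$ pour $j\in J_\sigma$, $\sigma\in\Sigma$. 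Appliquer le \sref{théorème}{Shidlovskii} à cet $L$ et majorer $\nu$ par $d$ donne la conclusion voulue, avec la même constante $c_1$.

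L'argument ne présente pas d'obstacle véritable : il traduit le fait classique (construction liée au vecteur cyclique) que toute fonction de la forme $\rho(Y)=\sum_i S_i y_i$, où $Y$ est solution d'un système différentiel linéaire de rang $d$ à coefficients dans $\C(z)$, satisfait une équation différentielle linéaire d'ordre au plus $d$ sur $\C(z)$. La seule vérification technique est le calcul par récurrence de $\rho(Y)^{(k)}=S^{(k)}Y$, qui est direct.
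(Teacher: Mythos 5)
Votre démonstration est correcte et suit essentiellement la même démarche que celle du papier : on construit par la récurrence $S^{(k+1)}=(S^{(k)})'+S^{(k)}A$ des vecteurs lignes vérifiant $\rho(Y)^{(k)}=S^{(k)}Y$, puis la dépendance $\C(z)$-linéaire de $d+1$ vecteurs de $\C(z)^d$ fournit un opérateur $L$ non nul d'ordre $\nu\leq d$ annulant tous les restes, auquel on applique le \sref{théorème}{Shidlovskii}. Votre remarque explicite que $\nu\leq d$ suffit (grâce à $\Delta+1\geq 0$) est même légèrement plus soignée que la rédaction du papier, qui affirme directement que $L$ est d'ordre $d$.
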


\begin{proof}
    Au vu du \autoref{Shidlovskii}, il suffit de montrer qu'il existe un opérateur différentiel $L \in \C(z)[\frac{d}{dz}]$ d'ordre $d$ qui annule $\rho(Y)$ pour toute solution du système différentiel $Y' = AY$. Notre méthode est basée sur les idées de Shidlovskii \cite[Chapter 3, §5]{shidlovskii89}.

    On identifie les vecteurs à des matrices lignes, et on note $\cdot$ le produit matriciel. On définit par récurrence les fractions rationnelles $S_{i, k}$ par
    \begin{equation*}
        \begin{cases}
            (S_{1, 1}, ..., S_{d, 1}) = (S_1, ..., S_d), \\
            (S_{1, k+1}, ..., S_{d, k+1}) = (S_{1, k}, ..., S_{d, k})\cdot\left(A + \frac{d}{dz}I_d\right),
         \end{cases}
    \end{equation*}
    de sorte que pour toute solution $Y = \prescript{t}{}{(}y_1, ..., y_d)$ de $Y' = AY$ et tout $k\in\N^*$, on ait
    \begin{equation*}
        \rho(Y)^{(k-1)}(z) = \sum_{i = 1}^d S_{i, k}(z)y_i(z).
    \end{equation*}

    Maintenant, considérons la matrice $\mathfrak{S} := (S_{i, k})_{i, k} \in M_{d, d+1}$. Ses colonnes sont liées. On trouve donc $\nu_1, ..., \nu_{d+1}\in\C(z)$ non tous nuls tels que $\mathfrak{S}\prescript{t}{}{(}\nu_1, ..., \nu_{d+1}) = 0$. Puisque d'autre part $(y_1, ..., y_d)\mathfrak{S} = (\rho(Y), \rho(Y)', ..., \rho(Y)^{(d)})$ pour toute solution de $Y' = AY$, on en déduit que l'opérateur $L := \sum_{k=1}^{d+1} \nu_k\left(\frac{d}{dz}\right)^{k-1}$ convient.
\end{proof}

\subsection{Une proposition cruciale}
\label{paragraphestructureshidlovskii}

Pour $n \in \mathcal{N}$, les entiers $c_{n, i, j}$ n'étant pas tous nuls, on peut considérer l'entier
\begin{equation}
\label{définitiondebn}
    b_n := \max \Big\{ i \in \llbracket 1, a \rrbracket \;\;\Big|\;\; P_{n, i} \neq 0 \Big\}.
\end{equation}
Puisque $b_n$ ne peut prendre qu'un nombre fini de valeurs, il existe un $b\in\llbracket 1, a\rrbracket$ tel que $b_n = b$ pour une infinité de $n$. Quitte à restreindre l'ensemble $\mathcal{N}$, nous supposons que $b_n = b$ pour tout $n\in\mathcal{N}$.

Fixons un $n\in\mathcal{N}$. Par définition de $b$, le polynôme $P_{n, b}$ est non nul, et tous les polynômes $P_{n, b+1}, ..., P_{n, a}$ sont nuls. De par les définitions \eqref{définitiondesQip}, \eqref{récurrencevérifiéeparlesQipk} et \eqref{definitiondespetitslambdas}, les polynômes $Q_{n, i, (p, k)}$ et les entiers $\lambda_{n, i, (p, k)}$ sont nuls lorsque $b+p+1\leq i\leq a+h$. Dans le formalisme de la \autoref{paragraphenotationspourlasuite}, les quantités $\Lambda_{n, (p, k)}$, $\;(p, k)\in\llbracket 0, h\rrbracket \times \llbracket 2rn+2, \kappa n\rrbracket$, forment donc des combinaisons linéaires de $\zeta_0^{<0>}, ..., \zeta_0^{<N-1>}, \zeta_1, ..., \zeta_{b+h}$. 

Le reste de la section $6$ est consacré à la démonstration du résultat suivant, que l'on peut interpréter comme suit. En formant une matrice $\mathcal{L}_n$ (voir \eqref{définitiondelamatriceLn}) avec les coefficients des combinaisons linéaires $\Lambda_{n, (p, k)}$, alors les coefficients $\bm{x} = \prescript{t}{}{(}x_0^{<0>}, ..., x_0^{<N-1>}, x_1, ..., x_{b+h})$ d'une éventuelle relation linéaire entre les colonnes de $\mathcal{L}_n$ doivent former un vecteur suffisamment éloigné du vecteur $\bm{\zeta} = \prescript{t}{}{(}\zeta_0^{<0>}, ..., \zeta_0^{<N-1>}, \zeta_1, ..., \zeta_{b + h})$, en ceci que $\bm{x}$ annule des formes linéaires dont au moins une n'est pas annulée par $\bm{\zeta}$ en vertu du \sref{lemme}{varphinannulepasleszeta}. Cette propriété est indispensable pour pouvoir appliquer la \sref{proposition}{critèred'indépendancelinéaire}.

\begin{proposition}
\label{propositiondéduitedeshidlovskii}
    Si $(h+1)(\kappa-2r)N + \omega > a$ et si $n$ est suffisamment grand, alors pour tout $\bm{x} = \prescript{t}{}{(}x_0^{<0>}, ..., x_0^{<N-1>}, x_1, ..., x_{b+h}) \in \C^{b+h+N}$ tel que
    \begin{equation*}
        \forall (p, k)\in\llbracket 0, h\rrbracket \times \llbracket 2rn+2, \kappa n\rrbracket \;\;\; \sum_{m=0}^{N-1} \lambda_{n, 0, (p, k)}^{<m>} x_0^{<m>} + \sum_{i=1}^{b + h} \lambda_{n, i, (p, k)} x_i = 0,
    \end{equation*}
    on a
    \begin{equation*} \forall\ell\in\llbracket 0, N-1\rrbracket\hspace{1.5cm}\begin{cases}
        \displaystyle{\sum_{m = 0}^{N-1} \sin\left(\frac{2\ell m\pi}{N}\right) x_0^{<m>} = 0} \;\;\;\;\;\;\;\;\text{si $\varepsilon = 0$}, \\
        \displaystyle{\sum_{m = 0}^{N-1} \cos\left(\frac{2\ell m\pi}{N}\right) x_0^{<m>} = 0} \;\;\;\;\;\;\;\;\text{si $\varepsilon = 1$}.
    \end{cases} \end{equation*}
\end{proposition}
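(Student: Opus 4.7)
La stratégie suit celle de \cite[§5]{fischler2018} adaptée au cadre d'un caractère de Dirichlet, et repose sur une application du \sref{corollaire}{corollairedeShidlovskii} à un système différentiel $\bm{Y}' = A\bm{Y}$ de dimension $d$ de l'ordre de $Na$ sur l'ouvert $\mathcal{U}$, dont l'espace des solutions est engendré par la fonction constante $1$, les polylogarithmes $\Li_i(\mu^\ell z)$ et les $\Li_i(1/(\mu^\ell z))$ pour $1\leq i\leq a+h$ et $0\leq\ell\leq N-1$. Ses points singuliers réguliers constituent l'ensemble $\Sigma := \{0, \infty\}\cup\{\mu^\ell\;|\;0\leq\ell\leq N-1\}$ ; l'hypothèse $N$ impair garantit crucialement que le point d'évaluation $z = -1$ n'appartient pas à $\Sigma$.

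Soit $\bm{x}$ vérifiant l'hypothèse de la proposition. D'après le \sref{lemme}{lemmeexpressiondelambdaenfonctiondez}, chaque fonction $z^{k-1}\widetilde{\Lambda}_{n, (p, k)}(z)$ s'interprète comme un reste $\rho_{(p, k)}(\bm{Y}^{(0)})$ pour une solution canonique $\bm{Y}^{(0)}$ du système, dont les composantes reproduisent via les formules d'inversion de Fourier (\sref{lemme}{formulesd'inversiondeFourier}) les constantes $\chi(m)$ et les fonctions $L(\chi, i, z)$. On construit alors une solution alternative $\bm{Y}^{\bm{x}}$ du même système dont l'évaluation en $z = -1$ encode le vecteur $\bm{x}$, de sorte que $\rho_{(p, k)}(\bm{Y}^{\bm{x}})(-1)$ coïncide, à un facteur non nul près, avec la forme linéaire $\sum_m\lambda_{n, 0, (p, k)}^{<m>}x_0^{<m>} + \sum_i\lambda_{n, i, (p, k)}x_i$. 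L'hypothèse d'annulation sur $\bm{x}$ entraîne alors l'annulation de $\rho_{(p, k)}(\bm{Y}^{\bm{x}})(-1)$ pour tout $(p, k)\in\llbracket 0, h\rrbracket\times\llbracket 2rn+2, \kappa n\rrbracket$.

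Le cœur de la démonstration consiste en un décompte précis des ordres de nullité des restes $\rho_{(p, k)}(\bm{Y}^{\bm{x}})$. D'une part, la propriété d'approximation de Padé (\sref{proposition}{existencedescij}$(i)$ : $F_n(t) = \mathcal{O}(t^{-\omega n})$) entraîne des ordres de nullité en $z = \infty$ et (symétriquement) en $z = 0$ de l'ordre de $\omega n + O(k)$. D'autre part, les $(h+1)(\kappa n - 2rn - 1)$ annulations en $z = -1$ --- point régulier --- imposent un degré élevé de dépendance linéaire entre les restes, qui se code en un opérateur différentiel $L \in \C(z)[\frac{d}{dz}]$ d'ordre contrôlé annulant l'ensemble des restes. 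Le \sref{corollaire}{corollairedeShidlovskii} fournit alors la majoration $(\Delta + 1)(d - \#J_\infty) + c_1$ de la somme des ordres aux points de $\Sigma$, avec $\Delta = O(n)$. Sous l'hypothèse $(h+1)(\kappa - 2r)N + \omega > a$, la comparaison des termes dominants en $n$ montre, pour $n$ suffisamment grand, que la borne inférieure fournie par le décompte excède la majoration de Shidlovskii, sauf si la solution $\bm{Y}^{\bm{x}}$ appartient à un sous-espace invariant propre, découpé par la symétrie $\ell\mapsto-\ell$ (conjugaison complexe sur $\mu$, puisque $\mu^{-\ell} = \overline{\mu^\ell}$) composée avec l'évaluation au point réel $z = -1$. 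Selon la parité $\varepsilon$, ce sous-espace correspond à la partie symétrique (si $\varepsilon = 1$) ou antisymétrique (si $\varepsilon = 0$) en $m$ des coordonnées $x_0^{<m>}$, ce qui se traduit, \emph{via} le \sref{lemme}{formulesd'inversiondeFourier}, par les conditions en sinus ou cosinus annoncées.

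La difficulté principale réside dans le décompte précis des ordres de nullité des restes à chaque point de $\Sigma$, en particulier aux racines $N$-ièmes de l'unité $\mu^\ell$, et dans l'identification du sous-espace invariant forcé par l'inégalité de Shidlovskii. Cette étape demande une étude fine de l'interaction entre la symétrie $\ell\mapsto -\ell$, l'évaluation au point réel $z = -1$, et la parité $\varepsilon$ (qui apparaît déjà, \emph{via} le facteur $(-1)^{i+\varepsilon}$ de \eqref{cllambdaen-1}, dans la construction initiale des $\widetilde{\Lambda}_{n, (p, k)}$), afin d'extraire exactement les conditions de parité énoncées plutôt que des contraintes plus faibles.
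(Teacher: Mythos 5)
Votre proposition identifie bien l'outil (le \sref{corollaire}{corollairedeShidlovskii}) et la bonne première idée : remplacer la solution polylogarithmique par une solution auxiliaire dont la valeur en $z=-1$ encode $\bm{x}$ (c'est ce que fait le texte avec les primitives $g_{n,i}$ et les fonctions $f_{n,p}=T_{n,p}+\sum_i Q_{n,i,p}\,g_{n,i}$). Mais le mécanisme quantitatif est mal posé. Dans le texte, l'annulation des formes linéaires pour tout $k\in\llbracket 2rn+2,\kappa n\rrbracket$ signifie exactement que les dérivées $(k-1)$-ièmes de $f_{n,p}$ s'annulent en $-1$ (les $Q_{n,i,(p,k)}$ suivent la même récurrence que la dérivation le long du système), d'où, après soustraction d'un polynôme de Taylor, $\mathrm{ord}_{-1}(f_{n,p})\geq\kappa n$, puis $\mathrm{ord}_{-\mu^\ell}\geq\kappa n$ par rotation ; ce sont ces ordres aux points réguliers $-\mu^\ell$ --- qu'il faut donc inclure dans $\Sigma$, avec $0$, $1$ et $\infty$ --- qui fournissent le terme dominant $(h+1)N\kappa n$ du membre de gauche de l'inégalité de Shidlovskii. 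Chez vous, $\Sigma$ ne contient que les singularités $\{0,\infty,\mu^\ell\}$ et vous attribuez à la condition de Padé des annulations d'ordre $\omega n$ en $0$ et $\infty$ : dans ce cadre, la condition $(i)$ de la \sref{proposition}{existencedescij} donne une annulation en $z=1$ (via $\tau_{n,1}=z^{rn}R_n$, d'ordre $\geq\omega n-1$), tandis qu'en $0$ et $\infty$ on ne dispose que d'ordres $rn$ et $-(r+1)n$ ; avec votre choix de $\Sigma$ et votre décompte, l'inégalité ne produit pas la contradiction voulue. Enfin, affirmer que les nombreuses annulations en $z=-1$ se codent en un opérateur différentiel $L$ d'ordre contrôlé confond deux choses : $L$ est simplement l'opérateur d'ordre $d$ construit dans le corollaire, et les annulations alimentent le membre de gauche de l'inégalité, pas l'ordre de $L$.

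La lacune décisive est ailleurs : vous ne vérifiez jamais l'hypothèse d'indépendance $\C$-linéaire des restes exigée par le lemme de Shidlovskii, or c'est précisément là que naissent les conditions en sinus et cosinus. Le texte raisonne par l'absurde : si pour un certain $\ell_0$ la somme en sinus (resp. cosinus) est non nulle, alors $\xi_{n,0}^{\{\ell_0\}}-\bar{\xi}_{n,0}^{\{\ell_0\}}\neq 0$, et cette quantité est exactement le résidu (au point $\mu^{-(\ell+\ell_0)}$) qui fait aboutir la récurrence prouvant l'indépendance des restes $\rho_{n,q}(\mu^\ell z)$ ; l'indépendance acquise, le décompte des ordres contredit l'hypothèse $(h+1)(\kappa-2r)N+\omega>a$ pour $n$ grand. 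Votre chemin vers la conclusion --- le décompte échouerait sauf si $\bm{Y}^{\bm{x}}$ appartient à un sous-espace invariant découpé par la symétrie $\ell\mapsto-\ell$, lequel correspondrait aux conditions trigonométriques --- est une affirmation sans argument : rien dans votre esquisse n'identifie ce sous-espace ni ne montre que l'exception est exactement celle de l'énoncé, et vous signalez vous-même en fin de proposition que cette identification reste à faire. En l'état, l'étape clef de la proposition manque.
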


Pour démontrer cette proposition, nous considérons pour $\;0\leq\ell\leq N-1\;$ les polynômes 
\begin{equation}
    \begin{split}
    \label{definitiondesQ0l}
        Q_{n, 0, (p, k)}^{\{\ell\}}(z) := \mu^{\ell(k-1)}Q_{n, 0, (p, k)}(\mu^\ell z), \\
        \Qbar_{n, 0, (p, k)}^{\{\ell\}}(z) := \mu^{\ell(k-1)}\Qbar_{n, 0, (p, k)}(\mu^\ell z),
    \end{split}
\end{equation}
ainsi que les matrices $\mathcal{Q}_n \in M_{(h+1)((\kappa-2r)n-1), b+h+2N}$ et $\mathcal{L}_n\in M_{(h+1)((\kappa-2r)n-1), b+h+N}$ ayant respectivement pour lignes
\begin{equation}
\label{définitiondelamatriceLn}
    \frac{\delta_{n, k}}{(k-1)!}\raisebox{2.6cm}{${}^{t}$}\begin{bmatrix}
        Q_{n, 0, (p, k)}^{\{0\}}(-1) \\
        \vdots \\
        Q_{n, 0, (p, k)}^{\{N-1\}}(-1) \\
        \Qbar_{n, 0, (p, k)}^{\{0\}}(-1) \\
        \vdots \\
        \Qbar_{n, 0, (p, k)}^{\{N-1\}}(-1) \\
        Q_{n, 1, (p, k)}(-1) \\
        \vdots \\
        Q_{n, b+h, (p, k)}(-1) 
    \end{bmatrix}, 
    \hspace{1cm} \raisebox{1.6cm}{${}^{t}$}\begin{bmatrix}
        \lambda_{n, 0, (p, k)}^{<0>} \\
        \vdots \\
        \lambda_{n, 0, (p, k)}^{<N-1>} \\
        \lambda_{n, 1, (p, k)} \\
        \vdots \\
        \lambda_{n, b+h, (p, k)} 
    \end{bmatrix},
    \hspace{0.5cm} (p, k)\in\llbracket 0, h\rrbracket \times \llbracket 2rn+2, \kappa n\rrbracket.
\end{equation}

Le point $(iii.b)$ du \sref{lemme}{formulesd'inversiondeFourier} évalué en $z=-1$ donne
\begin{equation*}
\label{fourieren-1}
    Q_{n, 0, (p, k)}^{<m>}(-1) = \frac{(-1)^{k-m-1}}{N}\sum_{\ell=0}^{N-1}\mu^{-\ell m} Q_{n, 0, (p, k)}^{\{\ell\}}(-1), \hspace{1cm}0\leq m\leq N-1,
\end{equation*}
la même relation étant valable en remplaçant $Q$ par $\Qbar$. Ainsi, l'expression \eqref{definitiondespetitslambdas} devient
\begin{equation*}
\label{expressiondespetitslambdasaveclesQ}
    \begin{split}
        &\lambda_{n, i, (p, k)} = \frac{\delta_{n, k}}{(k-1)!}Q_{n, i, (p, k)}(-1), \hspace{8.6cm}1\leq i\leq b+h,\\
        &\lambda_{n, 0, (p, k)}^{<m>} = \frac{\delta_{n, k}}{(k-1)!}\frac{1}{N} \sum_{\ell = 0}^{N-1} \Big( \mu^{\ell m}Q_{n, 0, (p, k)}^{\{\ell\}}(-1) + (-1)^{\varepsilon}\mu^{-\ell m}\Qbar_{n, 0, (p, k)}^{\{\ell\}}(-1)\Big), \hspace{1cm} 0\leq m\leq N-1.
    \end{split}
\end{equation*}

On a donc l'égalité matricielle
\begin{equation*}
    \mathcal{Q}_n \mathcal{M}_n = \mathcal{L}_n,
\end{equation*}
où les coefficients non nuls de $\mathcal{M}_n \in M_{b+h+2N, b+h+N}(\C)$ sont donnés par
\begin{equation*}
    \begin{cases}
        
        [\mathcal{M}_n]_{m+1, \ell+1} = \frac{\mu^{\ell m}}{N}, &\;\;0\leq \ell, m\leq N-1, \\
        [\mathcal{M}_n]_{m+N+1, \ell+1} = (-1)^{\varepsilon}\frac{\mu^{-\ell m}}{N}, &\;\;0\leq \ell, m\leq N-1, \\
        [\mathcal{M}_n]_{2N+i, N+i} = 1, &\;\;1\leq i\leq b+h.
    \end{cases}
\end{equation*}

\medskip
Nous supposons par l'absurde que la \sref{proposition}{propositiondéduitedeshidlovskii} est mise en défaut. Il existe alors $\bm{x}_n = \prescript{t}{}{(}x_{n, 0}^{<0>}, ..., x_{n, 0}^{<N-1>}, x_{n, 1}, ..., x_{n, b+h})\in\C^{b+h+N}$ et $\ell_0 \in \llbracket 0, N-1\rrbracket$ tels que 
\begin{equation}
\label{hypothèsesurl0}
    \mathcal{L}_n \bm{x}_n = 0 \hspace{1.5cm}\text{et}\hspace{1.5cm} \begin{cases} \sum_{m=0}^{N-1}\sin\left(\frac{2\ell_0 m\pi}{N}\right)x_{n, 0}^{<m>} \neq 0 &\text{si $\varepsilon = 0$}, \\ \sum_{m=0}^{N-1}\cos\left(\frac{2\ell_0 m\pi}{N}\right)x_{n, 0}^{<m>} \neq 0 &\text{si $\varepsilon = 1$}. \end{cases}
\end{equation} 

En définissant
\begin{equation}
\label{definitiondesxi}
    \bm{\xi}_n = \prescript{t}{}{(}\xi_{n, 0}^{\{0\}}, ..., \xi_{n, 0}^{\{N-1\}}, \bar{\xi}_{n, 0}^{\{0\}}, ..., \bar{\xi}_{n, 0}^{\{N-1\}}, \xi_{n, 1}, ..., \xi_{n, b+h}) := \mathcal{M}_n \bm{x}_n,
\end{equation}
on a $\mathcal{Q}_n \bm{\xi}_n= 0$. Cela se réécrit
\begin{align}
\label{hypothesesurlesxini}
    \forall (p, k) \in \llbracket 0, h \rrbracket &\times \llbracket 2rn+2, \kappa n \rrbracket  \\
    &\sum_{\ell=0}^{N-1} \bigg(\xi_{n, 0}^{\{\ell\}}Q_{n, 0, (p, k)}^{\{\ell\}}(-1) + \bar{\xi}_{n, 0}^{\{\ell\}}\Qbar_{n, 0, (p, k)}^{\{\ell\}}(-1)\bigg) + \sum_{i=1}^{b+h}Q_{n, i, (p, k)}(-1)\xi_{n, i} = 0. \nonumber
\end{align}

\medskip
Dans la \autoref{constructiondesfp}, nous construisons à l'aide de $\bm{\xi}_n$ des fonctions $f_{n, p}$ avec un grand ordre d'annulation en $z = -1$, que nous translatons pour obtenir des fonctions $f_{n, p}(\mu^\ell z)$ avec un grand ordre d'annulation en $z = -\mu^\ell$, $\;\;0\leq \ell\leq N-1$.

À partir de celles-ci, nous construisons dans la \autoref{constructiondesrhoq} des fonctions $\rho_{n, q}(\mu^\ell z)$ avec un aussi grand ordre d'annulation, et qui s'interprètent comme des restes dans le contexte de la \autoref{paragrapheénoncédeShidlovskii}.

Nous construisons dans la \autoref{constructiondestaubeta} d'autres fonctions $\tau_{n, u}$ qui s'interprètent comme des restes associés au même système différentiel, avec un grand ordre d'annulation aux points $0, 1$ et $\infty$.

Tous ces restes avec de grands ordres d'annulation nous permettent d'obtenir, pour $n$ suffisament grand, une contradiction dans la \autoref{appplicationdeShidlovskii} en appliquant le lemme de Shidlovskii (\sref{théorème}{Shidlovskii}). Ceci achève la démonstration de la \sref{proposition}{propositiondéduitedeshidlovskii}.

\subsection{Construction des fonctions $f_{n, p}$}
\label{constructiondesfp}

On se place dans un voisinage ouvert et simplement connexe $\mathscr{D}$ de $z=-1$ ne contenant ni $0$, ni aucune racine $N$-ième de l'unité. Afin que $\log(\mu^\ell z)$ soit bien défini pour tout $z\in\mathscr{D}$, on suppose de plus que $\mu^\ell\mathscr{D} \subset \mathcal{U}$ pour tout $\ell\in\llbracket 0, N-1\rrbracket$, où l'ouvert $\mathcal{U}$ est défini dans la \autoref{partie2}. On définit par récurrence des fonctions holomorphes $g_{n, i} : \mathscr{D} \to \C$ par
\begin{equation*}
    \begin{cases}
        g_{n, 1}(-1) = \xi_{n, 1}, &g'_{n, 1}(z) = \sum_{\ell=0}^{N-1} \frac{\xi_{n, 0}^{\{\ell\}}-\bar{\xi}_{n, 0}^{\{\ell\}}\mu^\ell z}{z(1-\mu^\ell z)}, \\
        g_{n, i}(-1) = \xi_{n, i}, &g'_{n, i}(z) = \frac{-1}{z}g_{n, i-1}(z), \hspace{2cm} 2\leq i\leq b+h.
    \end{cases}
\end{equation*}

On définit ensuite pour $0 \leq p \leq h$ les fonctions
\begin{equation}
\label{définitiondesfp}
    f_{n, p}(z) := T_{n, p}(z) + \sum_{i=1}^{b+h} Q_{n, i, p}(z)g_{n, i}(z),
\end{equation}
où $T_{n, p}(z)$ est l'opposé du polynôme de Taylor d'ordre $2rn$ de $\sum_{i=1}^{b+h} Q_{n, i, p}(z)g_{n, i}(z)$ en $-1$, de sorte que $\mathrm{ord}_{-1}(f_{n, p}) \geq 2rn+1$.

Il s'avère que l'ordre de $f_{n, p}$ en $-1$ est alors beaucoup plus élevé. En effet, le changement de variable $z \xleftarrow[]{} \mu^\ell z$ dans \eqref{récurrencevérifiéeparlesQipk} donne avec la définition \eqref{definitiondesQ0l} et la relation \eqref{lesQnipksontdansQzN}
\begin{equation*}
    \begin{cases}
        Q_{n, 0, (p, k+1)}^{\{\ell\}}(z) = (Q_{n, 0, (p, k)}^{\{\ell\}})'(z) + \frac{1}{z(1-\mu^{\ell} z)}Q_{n, 1, (p, k)}, &Q_{n, 0, (p, 1)}^{\{\ell\}} = 0, \\
        \Qbar_{n, 0, (p, k+1)}^{\{\ell\}}(z) = (\Qbar_{n, 0, (p, k)}^{\{\ell\}})'(z) - \frac{\mu^\ell}{1-\mu^{\ell} z}Q_{n, 1, (p, k)}, &\Qbar_{n, 0, (p, 1)}^{\{\ell\}} = 0.
    \end{cases}
\end{equation*}
Ainsi, par \eqref{définitiondesfp} et par définition des $g_{n, i}$, on montre par récurrence sur $k \geq 1$ :
\begin{equation*}
    f_{n, p}^{(k-1)}(z) = T_{n, p}^{(k-1)}(z) + \sum_{\ell=0}^{N-1} \bigg(\xi_{n, 0}^{\{\ell\}}Q_{n, 0, (p, k)}^{\{\ell\}}(z) + \bar{\xi}_{n, 0}^{\{\ell\}}\Qbar_{n, 0, (p, k)}^{\{\ell\}}(z)\bigg) + \sum_{i=1}^{b+h}Q_{n, i, (p, k)}(z)g_{n, i}(z).
\end{equation*}
Pour $2rn + 2 \leq k \leq \kappa n$, le polynôme $T_{n, p}^{(k-1)}$ est nul car $\mathrm{deg}\big(T_{n, p}\big) \leq 2rn$. On obtient alors $f_{n, p}^{(k-1)}(-1) = 0$ en vertu de \eqref{hypothesesurlesxini}, puisque $g_{n, i}(-1) = \xi_{n, i}$. Ainsi :
\begin{equation*}
    \forall p\in\llbracket 0, h \rrbracket, \hspace{1.5cm} \mathrm{ord}_{-1}(f_{n, p}) \geq \kappa n.
\end{equation*}

Les $f_{n, p}$ s'interprètent dans le contexte de la \autoref{paragrapheénoncédeShidlovskii} comme les restes associés à la solution $\prescript{t}{}{(}1, g_{n, 1}, ..., g_{n, b+h})$ du système différentiel 
\begin{equation}
\label{premiersystemediff}
    Y' = \begin{bmatrix}
        0 &0 &0 &\hdots &0 &0 \\
        \sum_{\ell=0}^{N-1} \frac{\xi_{n, 0}^{\{\ell\}}-\bar{\xi}_{n, 0}^{\{\ell\}}\mu^\ell z}{z(1-\mu^\ell z)} &0 &0 &\hdots &0 &0 \\
        0 &\frac{-1}{z} &0 &\hdots &0 &0 \\
        0 &0 &\frac{-1}{z} &\hdots &0 &0 \\
        \vdots &\vdots &\vdots &\ddots &\vdots &\vdots \\
        0 &0 &0 &\hdots &\frac{-1}{z} &0 
    \end{bmatrix}Y
\end{equation}
pondérés par les polynômes $T_{n, p}, Q_{n, 1, p}, ..., Q_{n, b+h, p}$. Cela n'est pas propice à appliquer le \\ \sref{théorème}{Shidlovskii} : nous avons ici une seule solution $(g_{n, i})_i$ pondérée par plusieurs familles de polynômes $(Q_{n, i, p})_i$ dépendant de $p$, alors qu'il faudrait plusieurs solutions $(y_{n, i, q})_i$, pondérées par une seule famille de polynômes $(S_{n, i})_i$ indépendante de $q$ pour donner des restes $\rho_{n, q}$.

\medskip
Nous construirons tous ces objets autour du système différentiel \eqref{grossystemediffcompliqué} dans la \autoref{constructiondesrhoq}. Avant cela, remarquons que l'on peut facilement obtenir $N$ fois plus de fonctions avec un grand ordre d'annulation en considérant les changements de variables $z \xleftarrow[]{} \mu^\ell z$. En effet, on a immédiatement 
\begin{equation}
\label{annulationdesfp}
    \forall p\in\llbracket 0, h \rrbracket, \;\forall \ell \in \llbracket 0, N-1 \rrbracket, \hspace{0.8cm} \mathrm{ord}_{-\mu^\ell}\Big(f_{n, p}(\mu^\ell z)\Big) \geq \kappa n.
\end{equation}
Les fonctions $g_{n, i}(\mu^\ell z)$ satisfont aux mêmes règles de dérivation que les $g_{n, i}(z)$, sauf pour $i = 1$. On considère alors pour $0 \leq \ell \leq N-1$ les solutions $\prescript{t}{}{\big(}0, ..., 0, 1, 0, ..., 0, g_{n, 1}(\mu^\ell z), ..., g_{n, b+h}(\mu^\ell z)\big)$, avec $N-1$ zéros et le $1$ en $(\ell+1)$-ième position, du système différentiel obtenu en remplaçant la première colonne de \eqref{premiersystemediff} par les $N$ colonnes
\begin{equation*}
    \begin{bmatrix}
        0 \\
        \sum_{m=0}^{N-1} \frac{\xi_{n, 0}^{\{m\}}-\bar{\xi}_{n, 0}^{\{m\}}\mu^{m+\ell} z}{z(1-\mu^{m+\ell} z)} \\
        0 \\
        0 \\ 
        \vdots \\
        0
    \end{bmatrix}, \hspace{2cm} 0\leq \ell\leq N-1,
\end{equation*}
où tous les coefficients sont nuls sauf éventuellement le deuxième. Les fonctions $f_{n, p}(\mu^\ell z)$ s'interprètent
alors comme les restes associés à ces solutions pondérées par les polynômes $T_{n, p}(z), T_{n, p}(\mu z), ..., T_{n, p}(\mu^{N-1} z), Q_{n, 1, p}, ..., Q_{n, b+h, p}$.

\subsection{Construction des restes $\rho_{n, q}$}
\label{constructiondesrhoq}

Nous définissons les fonctions
\begin{equation}
\label{definitiondesrhoq}
    \rho_{n, q}(z) := \sum_{p=0}^q \binom{q}{p} \Big(-\log(z)\Big)^{q-p}f_{n, p}(z), \hspace{2cm} 0\leq q\leq h.
\end{equation}
L'objet de cette sous-section est d’introduire un système différentiel, des vecteurs de solutions et des polynômes tels que les fonctions $\rho_{n, q}$ s'interprètent comme des restes pondérés.

\medskip
Nous désignons par $0_{u, v}$, respectivement $0_u$, la matrice nulle de $M_{u, v}(\C)$, respectivement $M_u(\C)$. Nous notons de plus $J_u$ la matrice de $M_u(\C)$ avec des $1$ sur la sous-diagonale et des $0$ partout ailleurs. Nous considérons alors le système différentiel $Y' = \mathscr{A}_n Y$ où
\begin{equation}
    \label{grossystemediffcompliqué}
    \mathscr{A}_n :=
        \begin{bmatrix}
            0_N &0_N &0_N &\hdots &0_N &0_N &0_{N, b} \\
            \frac{-1}{z}I_N &0_N &0_N &\hdots &0_N &0_N &0_{N, b} \\
            0_N &\frac{-1}{z}I_N &0_N &\hdots &0_N &0_N &0_{N, b} \\
            0_N &0_N &\frac{-1}{z}I_N &\hdots &0_N &0_N &0_{N, b} \\
            \vdots &\vdots &\vdots &\ddots &\vdots &\vdots &\vdots \\
            0_N &0_N &0_N &\hdots &\frac{-1}{z}I_N &0_N &0_{N, b} \\
            0_{b, N} &0_{b, N} &0_{b, N} &\hdots &0_{b, N} &\mathscr{B}_n &\frac{-1}{z}J_{b} 
        \end{bmatrix} \in M_{N(h+1)+b}\big(\C(z)\big).
    \end{equation}
Remarquons que la matrice $\mathscr{A}_n$ ne dépend de $n$ qu'à travers le bloc $\mathscr{B}_n\in M_{b, N}\big(\C(z)\big)$ qui a pour colonnes
\begin{equation*}
    \begin{bmatrix}
        \sum_{m=0}^{N-1} \frac{\xi_{n, 0}^{\{m\}}-\bar{\xi}_{n, 0}^{\{m\}}\mu^{m+\ell} z}{z(1-\mu^{m+\ell} z)} \\
        0 \\
        \vdots \\
        0
    \end{bmatrix}, \hspace{2cm}{0\leq \ell\leq N-1}.
\end{equation*}

\medskip
Nous considérons de plus les polynômes
\begin{equation*}
    \begin{cases}
        S_{n, i}(z) = \frac{1}{(h-i)!} T_{n, h-i}(z), &0\leq i\leq h, \\
        S_{n, i}(z) = z^{rn}P_{n, i-h}(z), &h+1\leq i\leq b+h,
    \end{cases}
\end{equation*}
et pour $0\leq q\leq h$ les fonctions 
\begin{equation*}
    \begin{cases}
        y_{n, i, q}(z) = 0, &\hspace{1cm}0\leq i\leq h-q-1, \\
        y_{n, i, q}(z) = \frac{q!}{(i+q-h)!}\Big(-\log(z)\Big)^{i+q-h}, &\hspace{1cm}h-q\leq i\leq h, \\
        y_{n, i, q}(z) = \sum_{p=0}^q \binom{q}{p} (-1)^p(i-h)_p\Big(-\log(z)\Big)^{q-p}g_{n, i-h+p}(z), &\hspace{1cm}h+1\leq i\leq b+h.
    \end{cases}
\end{equation*}
Nous arrangeons ces derniers au sein de vecteurs colonnes en posant
\begin{align*}
    \mathcal{S}_n := \prescript{t}{}{\Big(}S_{n,0}(z), S_{n,0}(\mu z), ..., S_{n,0}(\mu^{N-1}z), ...\;...,  S_{n,h}(z), &S_{n,h}(\mu z), ..., S_{n,h}(\mu^{N-1}z), \\
    &S_{n, h+1}(z), S_{n, h+2}(z), ..., S_{n, b+h}(z)\Big)
\end{align*}
et, pour $0\leq q\leq h$ et $0\leq\ell\leq N-1$ :
\begin{align*}
        Y_{n, q}^{\{\ell\}}(z) := \prescript{t}{}{\Big(}0, \;\;..., \;\;y_{n, 0, q}(\mu^\ell z), \;\;..., \;\;0, \;\;...\;\;..., \;\;0, \;\;..., \;\;&y_{n, h, q}(\mu^\ell z), \;\;..., \;\;0, \\
        &y_{n, h+1, q}(\mu^\ell z), ..., y_{n, b+h, q}(\mu^\ell z)\Big).
\end{align*}
Dans cette définition de $Y_{n, q}^{\{\ell\}}(z)$, jusqu'à l'indice $i=h$, chaque bloc est de longueur $N$ et contient $N-1$ zéros et $y_{n, i, q}(\mu^\ell z)$ au $(\ell+1)$-ième emplacement. À partir de l'indice $i=h+1$, chaque bloc est de longueur $1$ et contient seulement $y_{n, i, q}(\mu^\ell z)$.

Notons de plus que d'après la définition \eqref{definitiondesPki} des polynômes $P_{n, i}$, les polynômes $S_{n, i}$ avec $i \geq h+1$ vérifient $S_{n, i}(\mu^\ell z) = S_{n, i}(z)$ pour tout $\ell\in\llbracket 0, N-1\rrbracket$.

\begin{lemma}
    Dans le contexte précédent, 
    \begin{enumerate}
        \item[$(i)$] pour $0\leq q\leq h$ et $0\leq\ell\leq N-1$, $Y_{n, q}^{\{\ell\}}$ est solution du système différentiel $Y' = \mathscr{A}_n Y$ ;
        \item[$(ii)$] les fonctions $\rho_{n, q}(\mu^\ell z)$ s'interprètent comme les restes de ces solutions pondérés par la famille de polynômes $\mathcal{S}_n$ :
        \begin{equation*}
            \forall q\in\llbracket 0, h\rrbracket \;\;\forall\ell\in\llbracket 0, N-1\rrbracket \;\;\;\;\;\;\rho_{n, q}(\mu^\ell z) = \sum_{i=0}^{b+h} S_{n, i}(\mu^\ell z) y_{n, i, q}(\mu^\ell z) = \prescript{t}{}{\mathcal{S}_n}\cdot Y_{n, q}^{\{\ell\}} \:;
        \end{equation*}
        \item[$(iii)$] pour $\ell\in\llbracket 0, N-1 \rrbracket$, la famille $\Big(\rho_{n, q}(\mu^\ell z)\Big)_{0\leq q\leq h}$ est libre sur $\C$.
    \end{enumerate}
\end{lemma}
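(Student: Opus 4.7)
Pour $(i)$, on procède bloc par bloc. Pour les blocs d'indice $0 \leq s \leq h$, seule la $(\ell+1)$-ième composante de $Y_{n, q}^{\{\ell\}}$ est non nulle ; la règle de la chaîne produit un facteur $\mu^\ell$ qui se simplifie contre le $\frac{1}{\mu^\ell z}$ issu de la dérivée logarithmique, et un calcul par cas selon les valeurs de $s$ et $h-q$ fournit $\frac{d}{dz}[y_{n, s, q}(\mu^\ell z)] = -\frac{1}{z}y_{n, s-1, q}(\mu^\ell z)$, conforme aux blocs sous-diagonaux $-\frac{1}{z}I_N$ de $\mathscr{A}_n$. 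Pour les $b$ dernières lignes d'indice $h+j$, on dérive la somme $y_{n, h+j, q}(z) = \sum_{p=0}^q \binom{q}{p}(-1)^p(j)_p L^{q-p} g_{n, j+p}(z)$ (avec $L = -\log z$) en utilisant $g'_{n, i} = -\frac{1}{z}g_{n, i-1}$ pour $i \geq 2$ ; l'identité combinatoire $\binom{q}{p-1}(p-1)!(q-p+1) = \binom{q}{p}p!$ entraîne un télescopage qui donne $-\frac{1}{z}y_{n, h+j-1, q}(z)$ pour $j \geq 2$, tandis que pour $j = 1$ il ne subsiste que le terme $L^q g'_{n, 1}(\mu^\ell z)\cdot \mu^\ell$ (après réévaluation en $\mu^\ell z$), qui reproduit exactement l'action de la $(\ell+1)$-ième colonne de $\mathscr{B}_n$ sur $y_{n, h, q}(\mu^\ell z)$.

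Pour $(ii)$, on développe $\prescript{t}{}{\mathcal{S}_n}\cdot Y_{n, q}^{\{\ell\}}$. Comme le bloc $i$ de $Y_{n, q}^{\{\ell\}}$ pour $i \leq h$ n'a de composante non nulle qu'en position $\ell+1$ et que $S_{n, i}(\mu^\ell z) = S_{n, i}(z)$ pour $i \geq h+1$ (car $P_{n, i} \in \Q[z^N]$ et $N \mid rn$), la somme se réduit à $\sum_{i=0}^{b+h} S_{n, i}(\mu^\ell z) y_{n, i, q}(\mu^\ell z)$. Le changement d'indice $p = h-i$ sur la tranche $i \leq h$ reconstitue la partie polynomiale $\sum_p \binom{q}{p}L^{q-p}T_{n, p}(\mu^\ell z)$ de $\rho_{n, q}$, et la relation $Q_{n, j+p, p}(z) = (-1)^p(j)_p Q_{n, j, 0}(z)$ tirée de \eqref{définitiondesQip} (avec $j = i-h$) reconstitue la partie impliquant les $g_{n, i'}$ sur la tranche $h+1 \leq i \leq b+h$. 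La somme des deux contributions égale $\rho_{n, q}(\mu^\ell z)$ par \eqref{definitiondesrhoq}.

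Pour $(iii)$, on réécrit une éventuelle relation linéaire $\sum_q c_q \rho_{n, q}(\mu^\ell z) = 0$ sous la forme $\sum_p D_p(L) f_{n, p}(\mu^\ell z) = 0$ avec $D_p(L) := \sum_{q \geq p}c_q\binom{q}{p}L^{q-p}$, grâce à la structure triangulaire \eqref{definitiondesrhoq}. L'analyse asymptotique au voisinage de $z = 0$ s'appuie sur l'itération de $g'_{n, i} = -\frac{1}{z}g_{n, i-1}$ et sur $g'_{n, 1}(z) = \frac{\xi}{z} + O(1)$ avec $\xi := \sum_m \xi_{n, 0}^{\{m\}}$, ce qui donne $g_{n, i}(z) \sim \frac{(-1)^{i-1}\xi}{i!}(\log z)^i$. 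Combiné au choix $b = b_n$ (assurant un terme dominant $Q_{n, p+b, p}(z)g_{n, p+b}(z)$ dans $f_{n, p}$ avec $P_{n, b}$ non nul), ceci fournit $\rho_{n, q}(\mu^\ell z) \sim E_q z^{rn}(\log z)^{q+b}$ avec un coefficient $E_q$ calculable via l'intégrale de Bêta $B(b, q+1)$ et non nul pourvu que $\xi P_{n, b}(0) \neq 0$. La distinction des exposants $(\log z)^{q+b}$ pour $0 \leq q \leq h$ force alors $c_q = 0$ pour tout $q$. L'obstacle principal est le traitement des cas dégénérés $\xi = 0$ ou $P_{n, b}(0) = 0$ : il faut alors examiner le comportement au voisinage d'une autre racine $N$-ième de l'unité, où les logarithmes $\log(1-\mu^m z)$ deviennent dominants, ou exploiter les termes sous-dominants du développement.
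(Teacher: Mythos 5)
Les points $(i)$ et $(ii)$ de votre plan suivent essentiellement les mêmes calculs que l'article (vérification bloc par bloc de $Y'=\mathscr{A}_nY$ avec la règle de la chaîne et une identité de télescopage binomiale/Pochhammer ; réindexation de la double somme pour $(ii)$) ; ils sont corrects.

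Le point $(iii)$, qui porte tout le contenu du lemme, présente une lacune réelle. Votre stratégie — distinguer les $\rho_{n,q}$ par leur asymptotique en $z\to 0$ — se heurte à deux obstacles concrets. D'abord, l'asymptotique annoncée $\rho_{n,q}(\mu^\ell z)\sim E_q z^{rn}(\log z)^{q+b}$ ignore les parties polynomiales : dans $\rho_{n,q}=\sum_p\binom{q}{p}(-\log z)^{q-p}\bigl(T_{n,p}+\sum_i Q_{n,i,p}g_{n,i}\bigr)$, les termes $(-\log z)^{q-p}T_{n,p}(z)$ sont de taille $(\log z)^{q-p}$ près de $0$, ce qui écrase $z^{rn}(\log z)^{q+b}\to 0$, et rien ne force $T_{n,p}(0)=0$. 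Ensuite, votre condition de non-dégénérescence $\xi=\sum_m\xi_{n,0}^{\{m\}}\neq 0$ n'est pas acquise : d'après \eqref{definitiondesxi} on a $\sum_m\xi_{n,0}^{\{m\}}=x_{n,0}^{<0>}$, qui peut parfaitement s'annuler sous l'hypothèse \eqref{hypothèsesurl0}. La seule non-annulation fournie par le contexte est $\xi_{n,0}^{\{\ell_0\}}-\bar{\xi}_{n,0}^{\{\ell_0\}}\neq 0$, c'est-à-dire une information sur le résidu de $g_{n,1}'$ en $z=\mu^{-\ell_0}$ et non en $z=0$ ; et votre repli « regarder près d'une autre racine de l'unité » n'est pas mené à bien et y est réellement plus délicat, car $1/z$ est analytique en $\mu^{-\ell_0}$, de sorte que la récurrence $g_{n,i}'=-g_{n,i-1}/z$ n'accumule pas les puissances de $\log(1-\mu^{\ell_0}z)$ (déjà $g_{n,2}$ y est bornée). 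L'article contourne tout cela par un mécanisme différent : partant d'une relation $\sum_q\nu_q\rho_{n,q}(\mu^\ell z)=0$, il mène une récurrence descendante où, à chaque étape, on dérive $\deg+1$ fois pour tuer le coefficient de tête et on montre que le nouveau coefficient de tête est non nul par un calcul de résidu (en $z=0$ pour l'étape générique, en $z=\mu^{-(\ell+\ell_0)}$ pour l'étape franchissant l'indice $h+1$, où l'on utilise précisément $\xi_{n,0}^{\{\ell_0\}}-\bar{\xi}_{n,0}^{\{\ell_0\}}\neq 0$) ; la récurrence aboutit à la constante non nulle $y_{n,h-q_{\max}}$. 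Un argument invoquant explicitement \eqref{hypothèsesurl0} est inévitable : sans cette hypothèse on pourrait avoir $\bm{\xi}_n=0$ et tous les $\rho_{n,q}$ identiquement nuls, de sorte que $(iii)$ serait faux hors du contexte annoncé.
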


\begin{proof}
    $(i)\;$Fixons $q\in\llbracket 0, h\rrbracket$ et $\ell\in\llbracket 0, N-1\rrbracket.$
    \begin{enumerate}
        \item[$\bullet$] Si $i=0$, $\;y_{n, 0, q}(\mu^\ell z)$ est constant et sa dérivée est nulle.
        \item[$\bullet$] Si $1\leq i\leq h-q$, $\;y_{n, i, q}(\mu^\ell z)$ est constant et sa dérivée est nulle, et $\frac{-1}{z}y_{n, i-1, q}(\mu^\ell z)$ est nul aussi.
        \item[$\bullet$] Si $h-q+1\leq i\leq h$, on a
            \begin{align*}
                \frac{d}{dz}\Big[y_{n, i, q}(\mu^\ell z)\Big] &= \frac{q!}{(i+q-h)!}(i+q-h)\frac{-\mu^\ell}{\mu^\ell z}\Big(-\log(\mu^\ell z)\Big)^{i+q-h-1} \\
                                                        &= \frac{-1}{z}\frac{q!}{(i+q-h-1)!}\Big(-\log(\mu^\ell z)\Big)^{i+q-h-1} \\
                                                        &= \frac{-1}{z}y_{n, i-1, q}(\mu^\ell z).
            \end{align*}
        \item[$\bullet$] Si $i=h+1$, on a
            \begin{align*}
                \frac{d}{dz}\Big[y_{n, h+1, q}(\mu^\ell z)\Big] &= \Big(-\log(\mu^\ell z)\Big)^q \Bigg(\sum_{m=0}^{N-1} \frac{\xi_{n, 0}^{\{m\}}-\bar{\xi}_{n, 0}^{\{m\}}\mu^{m+\ell}z}{z(1-\mu^{m+\ell} z)} \Bigg) \\
                &\hspace{5mm}+ \sum_{p=1}^q \binom{q}{p} (-1)^p p!\Big(-\log(\mu^\ell z)\Big)^{q-p} \cdot\frac{-1}{z}g_{n, p}(\mu^\ell z) \\
                &\hspace{5mm}+ \sum_{p=0}^{q-1} \binom{q}{p}(-1)^p p! (q-p) \frac{-1}{z}\Big(-\log(\mu^\ell z)\Big)^{q-p-1}g_{n, p+1}(\mu^\ell z) \\
                &= \Bigg(\sum_{m=0}^{N-1} \frac{\xi_{n, 0}^{\{m\}}-\bar{\xi}_{n, 0}^{\{m\}}\mu^{m+\ell}z}{z(1-\mu^{m+\ell} z)} \Bigg) y_{n, h, q}(\mu^\ell z).
            \end{align*}
            En effet, les deux sommes sur $p$ sont opposées l'une à l'autre, comme on peut le voir en faisant le changement d'indice $p\xleftarrow[]{} p+1$ dans la deuxième.
           
        \item[$\bullet$] Enfin, si $h+2\leq i\leq b+h$, on a
            \begin{align*}
                &\frac{d}{dz}\Big[y_{n, i, q}(\mu^\ell z)\Big] \\
                = \hspace{3mm}&\sum_{p=0}^q \binom{q}{p}(-1)^p(i-h)_p\Big(-\log(\mu^\ell z)\Big)^{q-p}\cdot\frac{-1}{z}g_{n, i-h+p-1}(\mu^\ell z) \\
                &+\hspace{1mm}\sum_{p=0}^{q-1} \binom{q}{p} (-1)^p (i-h)_p (q-p)\frac{-1}{z}\Big(-\log(\mu^\ell z)\Big)^{q-p-1}g_{n, i-h+p}(\mu^\ell z) \\
                = \hspace{3mm}&\frac{-1}{z}\Big(-\log(\mu^\ell z)\Big)^qg_{n, i-h-1}(\mu^\ell z) \\
                &+\hspace{1mm}\frac{-1}{z}\sum_{p=1}^q \frac{q!}{p!(q-p)!}(-1)^p(i-h)_p\Big(-\log(\mu^\ell z)\Big)^{q-p}g_{n, i-h+p-1}(\mu^\ell z) \\
                &+\hspace{1mm}\frac{-1}{z}\sum_{p=1}^q \frac{q!}{(p-1)!(q-p)!}(-1)^{p-1}(i-h)_{p-1}\Big(-\log(\mu^\ell z)\Big)^{q-p} g_{n, i-h+p-1}(\mu^\ell z) \\
                = \hspace{3mm}&\frac{-1}{z}\Big(-\log(\mu^\ell z)\Big)^qg_{n, i-h-1}(\mu^\ell z) \\
                &+\hspace{1mm}\frac{-1}{z}\sum_{p=1}^q \frac{q!}{(q-p)!}(-1)^p\bigg(\frac{(i-h)_p}{p!}-\frac{(i-h)_{p-1}}{(p-1)!}\bigg)\Big(-\log(\mu^\ell z)\Big)^{q-p}g_{n, i-h+p-1}(\mu^\ell z) \\
                = \hspace{3mm}&\frac{-1}{z}\sum_{p=0}^q \binom{q}{p}(-1)^p(i-h-1)_p\Big(-\log(\mu^\ell z)\Big)^{q-p}g_{n, i-h+p-1}(\mu^\ell z) \\
                = \hspace{3mm}&\frac{-1}{z}y_{n, i-1, q}(\mu^\ell z),
            \end{align*}
            en utilisant l'identité $\frac{(i-h)_p}{p!}-\frac{(i-h)_{p-1}}{(p-1)!} = \frac{(i-h-1)_p}{p!}$.
    \end{enumerate}

    \medskip
    $(ii)\;$ Soit $\ell\in\llbracket 0, N-1\rrbracket$. On rappelle que pour $i\geq 1$, les polynômes $Q_{n, i, p}$ définis par \eqref{définitiondesQip} sont nuls si $i\notin\llbracket p+1, b+p\rrbracket$ et sont dans $\Q[z^N]$ sinon, si bien qu'alors $Q_{n, i, p}(\mu^\ell z) = Q_{n, i, p}(z)$. On calcule en utilisant \eqref{définitiondesfp} et \eqref{definitiondesrhoq} :
    \begin{align*}
        \rho_{n, q}(\mu^\ell z) &= \sum_{p=0}^q \binom{q}{p} \big(-\log(\mu^\ell z)\big)^{q-p} \Bigg[T_{n, p}(\mu^\ell z) + \sum_{i=p+1}^{b+p} Q_{n, i, p}(z)g_{n, i}(\mu^\ell z) \Bigg] \\
                            &= \sum_{p=0}^q \frac{1}{p!}T_{n, p}(\mu^\ell z) \frac{q!}{(q-p)!}\big(-\log(\mu^\ell z)\big)^{q-p} \\
                            &\hspace{0.5cm}+ \sum_{p=0}^q \binom{q}{p} \big(-\log(\mu^\ell z)\big)^{q-p} \sum_{i=h+1}^{b+h} Q_{n, i-h+p, p}(z)g_{n, i-h+p}(\mu^\ell z) \hspace{16mm}(i \xleftarrow[]{} i+h-p) \\
                            &= \sum_{i=h-q}^{h} \frac{1}{(h-i)!}T_{n, h-i}(\mu^\ell z) \frac{q!}{(i+q-h)!} \big(-\log(\mu^\ell z)\big)^{i+q-h} \hspace{35mm}(i = h - p) \\
                            &\hspace{0.5cm}+ \sum_{i = h+1}^{b+h} z^{rn}P_{n, i-h}(z)\sum_{p=0}^q \binom{q}{p} (-1)^p(i-h)_p \big(-\log(\mu^\ell z)\big)^{q-p}g_{n, i-h+p}(\mu^\ell z) \hspace{4mm}\text{\big(par \eqref{définitiondesQip}\big)} \\
                            &= \sum_{i=0}^{b+h} S_{n, i}(\mu^\ell z) y_{n, i, q}(\mu^\ell z) = \prescript{t}{}{\mathcal{S}_n}\cdot Y_{n, q}^{\{\ell\}}.
    \end{align*}

    \medskip
    $(iii)\;$ Fixons $\ell\in\llbracket 0, N-1 \rrbracket$ et supposons par l'absurde qu'il existe des scalaires $\nu_0, ..., \nu_h \in\C$ non tous nuls tels que $\sum_{q=0}^h \nu_q\rho_{n, q}(\mu^\ell z) = 0$. On utilise le résultat de $(ii)$ et on échange les sommes pour obtenir
    \begin{equation*}
        \sum_{i=0}^{b+h} S_{n, i}(\mu^\ell z)\sum_{q=0}^h \nu_q y_{n, i, q}(\mu^\ell z) = 0.
    \end{equation*}
    On considère alors les fonctions
    \begin{equation*}
        y_{n, i}(z) := \sum_{q=0}^h \nu_q y_{n, i, q}(z).
    \end{equation*}
    D'une part, elles vérifient
    \begin{equation}
    \label{initialisationdelarécurrence}
        \sum_{i=0}^{b+h} S_{n, i}(\mu^\ell z)y_{n, i}(\mu^\ell z) = 0.
    \end{equation}
    D'autre part, ce sont les coordonnées du vecteur
    \begin{align*}
        Y_{n}^{\{\ell\}}(z) :&= \sum_{q=0}^h \nu_q Y_{n, q}^{\{\ell\}}(z) \\
        &= \prescript{t}{}{\Big(}0, ..., y_{n, 0}(\mu^\ell z), ..., 0, ...\;..., 0, ..., y_{n, h}(\mu^\ell z), ..., 0, \\
        &\hspace{8.5cm}y_{n, h+1}(\mu^\ell z), ..., y_{n, b+h}(\mu^\ell z)\Big),
    \end{align*}
    qui est encore solution du système différentiel linéaire $Y' = \mathscr{A}_n Y$.

    \medskip
   En notant $q_{\max}$ le plus grand indice $q$ tel que $\nu_q \neq 0$, on a par définition 
   \begin{align*}
       y_{n, h-q_{\max}}(z) &= \sum_{q=0}^{q_{\max}} \nu_q y_{n, h-q_{\max}, q}(z) \\
                            &= \nu_{q_{\max}} y_{n, h-q_{\max}, q_{\max}}(z) \hspace{2.5cm}\text{(car $y_{n, i, q} = 0$ si $i<h-q$)} \\
                            &= \nu_{q_{\max}} q_{\max} ! \neq 0.
   \end{align*}
   On pose donc $i_{\min} = h-q_{\max}$, de sorte que $y_{n, i_{\min}}(z)$ est une constante non nulle et $y_{n, i} = 0$ pour tout $i<i_{\min}$.
    On montre alors par récurrence décroissante sur $i_{\max}\in\llbracket i_{\min}, b+h\rrbracket$ l'énoncé suivant :
    \begin{equation*}
        H_{i_{\max}} : \hspace{5mm}\text{\guillemotleft \hspace{2mm}Il existe des polynômes $R_{i_{\max}, i} \in \C[z], \;i_{\min}\leq i\leq i_{\max},\;$ tels que $R_{i_{\max}, i_{\max}} \neq 0$} 
    \end{equation*}
    \begin{equation*}
        \text{et $\sum_{i=i_{\min}}^{i_{\max}} R_{i_{\max}, i}(z) y_{n, i}(\mu^\ell z) = 0$}\hspace{8mm} (*)\hspace{2mm}\text{\guillemotright}.
    \end{equation*}
    D'après \eqref{initialisationdelarécurrence}, l'énoncé est vrai pour $i_{\max} = b+h$ en posant $R_{b+h, i}(z) = S_{n, i}(\mu^\ell z)$. En effet, le polynôme $S_{n, b+h} = z^{rn}P_{n, b}$ est non nul par définition de $b$.
    Supposons maintenant $H_{i_{\max}}$ vrai pour un indice $i_{\max}\in\llbracket i_{\min} + 1, b+h\rrbracket$. Soit $v$ le degré de $R_{i_{\max}, i_{\max}}$. En dérivant $v+1$ fois la relation $(*)$, on obtient
    \begin{equation*}
        \sum_{i=i_{\min}}^{i_{\max}} \widetilde{R}_{i_{\max}-1, i}(z)y_{n, i}(\mu^\ell z) = 0,
    \end{equation*}
    où l'expression des fractions rationnelles $\widetilde{R}_{i_{\max}-1, i}$ se calcule avec la règle de Leibniz et le fait que $Y_n^{\{\ell\}}$ est solution de $Y'=\mathscr{A}_nY$ où $\mathscr{A}_n$ est donnée par \eqref{grossystemediffcompliqué}. En particulier, on a que
    
    \begin{enumerate}[left=0.5pt]
        \item[$\bullet$] pour $i_{\min}\leq i\leq i_{\max}$, $\widetilde{R}_{i_{\max}-1, i}$ a pour seuls pôles éventuels $0, 1, \mu, \mu^2, ..., \mu^{N-1}$, avec multiplicité au plus $v+1$ chacun.
        \item[$\bullet$] $\widetilde{R}_{i_{\max}-1, i_{\max}} = R_{i_{\max}, i_{\max}}^{(v+1)} = 0.$
        \item[$\bullet$] 
        $\widetilde{R}_{i_{\max}-1, i_{\max}-1} =$ 
        \begin{equation*}
            \begin{cases}
                R_{i_{\max}, i_{\max}-1}^{(v+1)} + \sum_{u=0}^v \left(\frac{d}{dz}\right)^{v-u} \left(\frac{-1}{z}R_{i_{\max}, i_{\max}}^{(u)}\right) &\text{si $i_{\max}\neq h+1$}, \\
                R_{i_{\max}, i_{\max}-1}^{(v+1)} + \sum_{u=0}^v \left(\frac{d}{dz}\right)^{v-u} \Bigg(\bigg(\sum_{m=0}^{N-1}\frac{\xi_{n, 0}^{\{m\}}-\bar{\xi}_{n, 0}^{\{m\}}\mu^{m+\ell}z}{z(1-\mu^{m+\ell}z)} \bigg)R_{i_{\max}, i_{\max}}^{(u)}\Bigg) &\text{si $i_{\max} = h+1$}.
            \end{cases}
        \end{equation*}
    \end{enumerate}
    Cette dernière fraction rationnelle n'est pas nulle. En effet dans les deux cas le seul terme pouvant avoir un résidu non nul en un $z\in\C$ est le terme d'indice $u = v$ dans la somme, puisque ni un polynôme ni la dérivée d'une fraction rationnelle ne peuvent avoir de résidu. Or : 
    \begin{enumerate}
        \item[\textrightarrow] dans le cas $i_{\max} \neq h+1$, le terme de la somme avec $u = v$ a pour résidu $-R_{i_{\max}, i_{\max}}^{(v)}$ au point $z=0$. Il s'agit d'une constante non nulle d'après l'hypothèse de récurrence, puisque $v = \mathrm{deg}(R_{i_{\max}, i_{\max}})$.
        \item[\textrightarrow] dans le cas $i_{\max} = h + 1$, le terme de la somme avec $u = v$ a pour résidu au point $z=\mu^{-(\ell+\ell_0)}$ la quantité ${-(\xi_{n, 0}^{ \{ \ell_0 \} }  - \bar{\xi}_{n, 0}^{ \{ \ell_0 \} })R_{i_{\max}, i_{\max}}^{(v)}}$, où $\ell_0$ est donné par \eqref{hypothèsesurl0}. Mais d'après la définition \eqref{definitiondesxi} de $\bm{\xi}$, on a
        \begin{align*}
            \xi_{n, 0}^{\{\ell_0\}} - \bar{\xi}_{n, 0}^{\{\ell_0\}} &= \frac{1}{N}\sum_{m=0}^{N-1} \mu^{\ell_0 m}x_0^{<m>} - \frac{(-1)^\varepsilon}{N}\sum_{m=0}^{N-1} \mu^{-\ell_0 m}x_0^{<m>} \\
            &= \begin{cases}
                \frac{2i}{N}\sum_{m=0}^{N-1} \sin\left(\frac{2\ell_0 m\pi}{N}\right) x_0^{<m>} &\text{si $\varepsilon = 0$}, \\
                \frac{2}{N}\sum_{m=0}^{N-1} \cos\left(\frac{2\ell_0 m\pi}{N}\right) x_0^{<m>} &\text{si $\varepsilon = 1$},
            \end{cases}
        \end{align*}
        et cette quantité est non nulle par définition de $\ell_0$.
    \end{enumerate}
     Ainsi, on obtient des polynômes convenables pour $H_{i_{\max}-1}$ en posant pour $i_{\min}\leq i\leq i_{\max}-1$ :
    \begin{equation*}
        R_{i_{\max}-1, i} := z^{v+1}\prod_{m=0}^{N-1}(1-\mu^m z)^{v+1} \widetilde{R}_{i_{\max}-1, i}.
    \end{equation*}
    Ceci achève la récurrence.

    \medskip
    Maintenant, $H_{i_{\min}}$ donne l'existence d'un polynôme $R_{i_{\min}, i_{\min}}$ non nul tel que $R_{i_{\min}, i_{\min}}(z)y_{n, i_{\min}} = 0$, ce qui est absurde car $y_{n, i_{\min}}$ est une constante non nulle comme observé plus haut.
    Il n'existe donc aucune famille de scalaires $\nu_0, ..., \nu_h\in\C$ non tous nuls telle que $\sum_{q=0}^h \nu_q\rho_{n, q}(\mu^\ell z) = 0$.
\end{proof}

\subsection{Construction des restes $\tau_{n, u}$}
\label{constructiondestaubeta}

Nous considérons pour $1\leq u\leq b$ les solutions $\Psi_{n, u} = \prescript{t}{}{(}0, ..., 0, \psi_{n, 1, u}, ..., \psi_{n, b, u})$ du système différentiel $Y' = \mathscr{A}_n Y$, où seules les $b$ dernières coordonnées sont éventuellement non nulles, et valent
\begin{equation*}
    \begin{cases}
        \psi_{n, i, u}(z) = 0, &1\leq i \leq u -1, \\
        \psi_{n, i, u}(z) = \frac{\big(-\log(z)\big)^{i-u}}{(i-u)!}, &u \leq i \leq b.
    \end{cases}
\end{equation*}

Les restes associés valent alors
\begin{equation}
\label{expressiondestaubeta}
    \tau_{n, u}(z) := \prescript{t}{}{\mathcal{S}_n}\cdot\Psi_{n, u} = \sum_{i = u}^{b} z^{rn}P_{n, i}(z)\frac{\big(-\log(z)\big)^{i-u}}{(i-u)!}. 
\end{equation}

\begin{lemma}
    La famille $\big(\tau_{n, u}(z)\big)_{1\leq u\leq b}$ est libre sur $\C$.
\end{lemma}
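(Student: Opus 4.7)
The plan is to exploit the triangular structure of the family $\big(\tau_{n,u}\big)_{1\leq u\leq b}$ when viewed as polynomials in $-\log(z)$ with coefficients in $\mathbb{C}(z)[z^{rn}]$. After reindexing by $j = i-u$, the defining formula \eqref{expressiondestaubeta} becomes
\begin{equation*}
    \tau_{n, u}(z) = z^{rn}\sum_{j=0}^{b-u} P_{n, u+j}(z)\frac{\big(-\log(z)\big)^{j}}{j!},
\end{equation*}
so $\tau_{n,u}$ is a polynomial of degree exactly $b-u$ in the variable $-\log(z)$, with leading coefficient
\begin{equation*}
    \frac{z^{rn}P_{n,b}(z)}{(b-u)!}.
\end{equation*}
By the very definition \eqref{définitiondebn} of $b$, the polynomial $P_{n, b}$ is nonzero, so this leading coefficient is a nonzero rational function.

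The key idea is then that $\log(z)$ is transcendental over the field $\mathbb{C}(z)$ of rational functions (on the simply connected open set $\mathcal{U}$ defined in Section~\ref{partie2}, it is a well-defined holomorphic function and a standard argument — e.g.\ looking at monodromy or differentiating an alleged algebraic relation to get a contradiction — shows $\log(z)\notin\overline{\mathbb{C}(z)}$). Consequently the $\mathbb{C}(z)$-vector space $\mathbb{C}(z)[\log(z)]$ admits the powers $\big(-\log(z)\big)^{j}$, $j\geq 0$, as a basis.

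Suppose now for contradiction that there exist scalars $\nu_{1}, \ldots, \nu_{b}\in\mathbb{C}$, not all zero, such that $\sum_{u=1}^{b}\nu_{u}\tau_{n,u}(z)=0$. Let $u_{0}:=\min\{u\in\llbracket 1, b\rrbracket : \nu_{u}\neq 0\}$. Because $\tau_{n,u}$ has degree $b-u<b-u_{0}$ in $-\log(z)$ for every $u>u_{0}$, the coefficient of $\big(-\log(z)\big)^{b-u_{0}}$ in the linear combination equals
\begin{equation*}
    \nu_{u_{0}}\,\frac{z^{rn}P_{n, b}(z)}{(b-u_{0})!},
\end{equation*}
which is a nonzero element of $\mathbb{C}(z)$. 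By the transcendence of $\log(z)$ over $\mathbb{C}(z)$, the combination cannot vanish identically, giving the required contradiction.

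There is no real obstacle: once one recognizes that $\tau_{n,u}$ is a polynomial in $-\log(z)$ of precise degree $b-u$ with a visibly nonzero leading coefficient (because $P_{n,b}\neq 0$), linear independence over $\mathbb{C}$ (in fact over $\mathbb{C}(z)$) follows from a triangularity argument identical in spirit to the one used in the proof of part $(iii)$ of the preceding lemma. The only subtlety worth emphasizing is to argue on the highest-degree term, in order to bypass any issues with lower-order cancellation between the various $\tau_{n,u}$.
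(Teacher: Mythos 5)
Votre preuve est correcte et suit essentiellement la même démarche que celle de l'article : on isole le plus petit indice $u_0$ tel que $\nu_{u_0}\neq 0$, on observe que le coefficient de $\big(-\log(z)\big)^{b-u_0}$ dans la combinaison vaut $\nu_{u_0}z^{rn}P_{n,b}(z)/(b-u_0)!$, non nul puisque $P_{n,b}\neq 0$ par définition de $b$, et on conclut par la transcendance de $\log(z)$ sur $\C(z)$. Aucune différence substantielle avec l'argument du texte.
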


\begin{proof}
    Supposons par l'absurde qu'il existe des scalaires $\nu_1, ..., \nu_{b}\in\C$ non tous nuls tels que $\sum_{u=1}^{b} \nu_u \tau_{n, u} (z) = 0$. Soit $u_{\min}$ le plus petit indice de $\llbracket 1, b\rrbracket$ tel que $\nu_{u_{\min}} \neq 0$.

    On a alors la relation $\C[z]$-linéaire suivante entre les puissances de $\log(z)$ :
    \begin{equation*}
        \sum_{u=u_{\min}}^{b} \sum_{i=u}^{b} \nu_u z^{rn}P_{n, i}(z)\frac{\Big(-\log(z)\Big)^{i-u}}{(i-u)!} = 0.
    \end{equation*}
    Puisque $\log(z)$ est transcendant sur $\C[z]$, le coefficient de $\Big(-\log(z)\Big)^{b-u_{\min}}$ dans cette relation doit être nul :
    \begin{equation*}
        \frac{\nu_{u_{\min}}}{(b-u_{\min})!}z^{rn}P_{n, b}(z) = 0.
    \end{equation*}
    C'est absurde, puisque $P_{n, b} \neq 0$ par définition de $b$ et $\nu_{u_{\min}} \neq 0$ par définition de $u_{\min}$.
\end{proof}

\subsection{Application d'un ``lemme de Shidlovskii" et contradiction}
\label{appplicationdeShidlovskii}

Nous allons appliquer le \sref{corollaire}{corollairedeShidlovskii} avec $d = N(h+1) + b$ la taille de la matrice $\mathscr{A}_n$ et $\Delta = 2rn$ majorant les degrés des polynômes $S_{n, i}$. Nous considérons l'ensemble de points 
\begin{equation*}
    \Sigma = \{-\mu^\ell \;|\; 0\leq\ell\leq N-1\}\cup\{0, 1, \infty\}.
\end{equation*}

On rappelle que les facteurs logarithmiques n'ont pas d'influence sur l'ordre. Des équations \eqref{annulationdesfp} et \eqref{definitiondesrhoq}, on déduit les ordres d'annulation
\begin{equation*}
    \forall q\in\llbracket 0, h \rrbracket, \;\;\;\forall \ell\in\llbracket 0, N-1\rrbracket, \hspace{1.5cm} \mathrm{ord}_{-\mu^\ell}\Big(\rho_{n, q}(\mu^\ell z)\Big) \geq \kappa n.
\end{equation*}

De l'expression \eqref{expressiondestaubeta} on tire immédiatement
\begin{equation*}
    \forall u\in\llbracket 1, b\rrbracket, \hspace{1.5cm} \mathrm{ord}_0\Big(\tau_{n, u}(z)\Big) \geq rn.
\end{equation*}
Pour minorer l'ordre de $\tau_{n, u}$ au point $\infty$, réécrivons l'expression \eqref{expressiondestaubeta} sous la forme
\begin{equation*}
    \tau_{n, u}(z) = \sum_{i=u}^{b} \left(\frac{1}{z}\right)^{-(r+1)n}\frac{1}{z^n}P_{n, i}(z)\frac{\log\left(\frac{1}{z}\right)^{i-u}}{(i-u)!},
\end{equation*}
où le polynôme de Laurent $\frac{1}{z^n}P_{n, i}(z)$ ne fait apparaître que des puissances négatives de $z$, et est donc holomorphe à l'infini. On en déduit que
\begin{equation*}
    \forall u\in\llbracket 1, b\rrbracket, \hspace{1.5cm} \mathrm{ord}_\infty\Big(\tau_{n, u}(z)\Big) \geq -(r+1)n.
\end{equation*}

Enfin, on déduit des équations \eqref{définitiondesRn} et \eqref{expressiondestaubeta} l'égalité $\tau_{n, 1}(z) = z^{rn}R_n(z)$. Par la suite d'équivalences à la fin de la \autoref{souspartie3.1} et la \sref{proposition}{existencedescij} $(i)$, il s'ensuit
\begin{equation*}
    \mathrm{ord}_1\Big(\tau_{n, 1}(z)\Big) \geq \omega n - 1.
\end{equation*}

\medskip
Ainsi, $\Sigma$ étant fixé, le \sref{corollaire}{corollairedeShidlovskii} donne l'existence d'un réel $c_{1}$ ne dépendant que de $\Sigma$ et de $\mathscr{A}_n$ tel que
\begin{equation*}
    (h+1)N\kappa n + brn - b(r+1)n + \omega n - 1 \leq (2rn + 1)\big((h+1)N+b-b\big) + c_{1},
\end{equation*}
ou encore
\begin{equation}
\label{nassezgrand2}
    \Big((h+1)(\kappa-2r)N -b+\omega\Big)n \leq (h+1)N+c_{1}+1 .
\end{equation}

Comme détaillé dans la remarque ci-dessous, $c_1$ ne dépend de $\mathscr{A}_n$ qu'à travers des propriétés de cette matrice qui ne dépendent pas de $n$. Cette remarque permet donc d'affirmer que dans \eqref{nassezgrand2}, le membre de droite et le facteur devant $n$ dans le membre de gauche sont indépendants de $n$. Pour $n$ suffisamment grand, l'inégalité implique donc que le facteur $(h+1)(\kappa-2r)N -b+\omega$ est négatif. Ainsi, on a
\begin{equation*}
    (h+1)(\kappa -2r)N + \omega \leq b \leq a.
\end{equation*}
Cela entre en contradiction avec l'hypothèse $(h+1)(\kappa-2r)N + \omega > a$ et conclut la démonstration de la \sref{proposition}{propositiondéduitedeshidlovskii}.

\begin{remark}
    La preuve du \autoref{Shidlovskii} donnée par \cite[Theorem 3.1]{fischler2018} induit l'expression
        \begin{equation*}
            c_{1} = \sum_{\sigma\in\Sigma}\left((\kappa_\sigma + 1)\frac{\nu(\nu-1)}{2}-(\nu-\#J_\sigma)r_\sigma\right) - \nu(\nu-1)-(\nu-\#J_\infty)
        \end{equation*}
        où $\kappa_\sigma$ et $r_\sigma$ sont respectivement un majorant du rang et un minorant des ordres généralisés d'un système complet de solutions de $Y' = \mathscr{A}_n Y$. Ces notions sont définies dans \cite[p.154]{fischler2018}. Dans notre cas, on peut prendre
        \begin{enumerate}
            \item[$\bullet$] $\kappa_\sigma = 0$ pour tout $\sigma\in\Sigma$. En effet, tous les points singuliers du système différentiel $Y' = \mathscr{A}_n Y$ sont réguliers, ce qui implique que toutes les solutions du système sont dans la classe de Nilsson en tout point.
            \item[$\bullet$] $r_\sigma = 0$ pour tout $\sigma\in\Sigma$. En effet, la matrice $\mathscr{A}_n$ étant triangulaire inférieure stricte, tous les exposants du système différentiel $Y' = \mathscr{A}_n Y$ sont nuls.
            \item[$\bullet$] $\nu = d = N(h+1)+b$ en considérant l'opérateur différentiel $L$ d'ordre $d$ construit dans la preuve du \sref{corollaire}{corollairedeShidlovskii}.
        \end{enumerate}
        On a de plus $\#\Sigma = N+3$ et $\#J_\infty = b$, de sorte que 
        \begin{equation*}
            c_{1} = (N+1)\frac{\big(N(h+1)+b\big)\big(N(h+1)+b-1\big)}{2}-N(h+1)
        \end{equation*}
        convient.
\end{remark}

puisque l'entier b dépend de n, il me semble qu'il faut se retreindre à une partie infinie de mathcal N pour laquelle b reste constant

\section{Conclusion}
\label{partie7}

Dans la \autoref{soussection7.1}, nous énonçons et démontrons un critère d'indépendance linéaire qui est une généralisation de \cite[Proposition 1, p.6]{fischler2021}, cette dernière correspondant à la forme linéaire $\varphi(\bm{x}) = x_0$.

Dans la \autoref{soussection7.2}, nous appliquons ce critère aux combinaisons linéaires $\Lambda_{n, (p, k)}$ afin d'achever la démonstration du \autoref{théorèmeprincipal}.

\subsection{Un critère d'indépendance linéaire}
\label{soussection7.1}

\begin{proposition}
\label{critèred'indépendancelinéaire}
    (Critère d'indépendance linéaire)

    Soit $\mathbb{K}$ un corps de nombres. On note $\mathbb{K}_\infty$ pour désigner $\R$ si $\mathbb{K}\subseteq \R$ et $\C$ sinon.
    
    Soit $s\geq 0$ un entier. Soit $\bm{\zeta} = \prescript{t}{}{(}\zeta_0, ..., \zeta_s) \in \mathbb{K}_\infty^{s+1}$ et $\varphi : \mathbb{K}_\infty^{s+1} \to \mathbb{K_\infty}$ une forme linéaire à coefficients dans $\mathbb{K}$ telle que $\varphi(\bm{\zeta}) \neq 0$.

    Soient $\tau > 0$ un réel et $(Q_n)_n$ une suite de réels tendant vers $+\infty$. Soit $\mathcal{N}\subseteq \N$ un ensemble infini.
    
    On suppose disposer pour tout $n\in\mathcal{N}$ d'un entier $K_n \geq 1$ et de coefficients $\lambda_{n, i, k} \in \mathcal{O}_{\mathbb{K}}$, $\;0\leq i \leq s, \;0\leq k\leq K_n$ tels que :
    \begin{enumerate}
        \item[$(i)$] on a la majoration sur les combinaisons linéaires
            \begin{equation*}
                \forall k\in\llbracket 0, K_n\rrbracket, \;\;\;\left| \sum_{i=0}^s \lambda_{n, i, k}\zeta_i\right| \leq Q_n^{-\tau+o(1)},
            \end{equation*}
        \item[$(ii)$] on a la majoration sur les coefficients
            \begin{equation*}
                \forall i\in\llbracket 0, s\rrbracket,\;\;\; \forall k\in\llbracket 0, K\rrbracket, \;\;\; \house{\lambda_{n, i, k}} \leq Q_n^{1+o(1)},
            \end{equation*}
        \item[$(iii)$] pour tout $n\in\mathcal{N}$ assez grand, si des coefficients $\bm{x} = \prescript{t}{}{(}x_0, ..., x_s)\in\mathbb{K}^{s+1}$ vérifient \begin{equation*}
            \forall k \in \llbracket 0, K_n \rrbracket \;\;\; \sum_{i=0}^s \lambda_{n, i, k}x_i = 0,
        \end{equation*}
        alors $\varphi(\bm{x}) = 0$.
    \end{enumerate}
    Alors
    \begin{equation*}
        \mathrm{dim}_{\mathbb{K}}\mathrm{Vect}_{\mathbb{K}}(\zeta_0, ..., \zeta_s) \geq \frac{[\mathbb{K}_\infty : \R]}{[\mathbb{K}:\mathbb{Q}]}(\tau+1)
    \end{equation*}
\end{proposition}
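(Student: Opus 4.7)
La stratégie est de raisonner par l'absurde en supposant que $r := \dim_\K \mathrm{Vect}_\K(\zeta_0, \ldots, \zeta_s)$ vérifie $r < \delta(\tau+1)$, où $\delta := [\K_\infty : \R]/[\K : \Q]$. Voyant chaque $L_{n,k}(\bm{x}) := \sum_i \lambda_{n,i,k}x_i$ et $\varphi$ comme des formes $\K$-linéaires sur $\K^{s+1}$, l'hypothèse $(iii)$ se traduit par $\bigcap_k \ker L_{n,k} \subseteq \ker\varphi$, ce qui équivaut par dualité à $\varphi \in \mathrm{Vect}_\K(L_{n,k} \,:\, 0 \leq k \leq K_n)$ pour tout $n\in\mathcal{N}$ assez grand. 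L'application d'évaluation $L \mapsto L(\bm{\zeta})$ étant $\K$-linéaire d'image $V := \mathrm{Vect}_\K(\zeta_0, \ldots, \zeta_s)$, on en déduit que $\varphi(\bm{\zeta})$ appartient au $\K$-sous-espace $V_n := \mathrm{Vect}_\K\big(L_{n,k}(\bm{\zeta})\big)$ de $V$, dont la dimension $u$ vérifie $1 \leq u \leq r$ (la minoration provenant de $\varphi(\bm{\zeta}) \neq 0$).

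Je fixerais ensuite une $\K$-base $(\eta_1, \ldots, \eta_r)$ de $V$ et un $D \in \Z_{>0}$ tel que $Da_{ij} \in \mathcal{O}_\K$, où $\zeta_i = \sum_j a_{ij}\eta_j$. En écrivant $L_{n,k}(\bm{\zeta}) = \sum_j \mu_{n,j,k}\eta_j$ et $\varphi(\bm{\zeta}) = \sum_j \nu_j\eta_j$, les $D\mu_{n,j,k} = \sum_i \lambda_{n,i,k}\,(Da_{ij})$ sont des entiers algébriques de maison $\leq C\, Q_n^{1+o(1)}$ par hypothèse $(ii)$, tandis que les $\nu_j \in \K$ sont des constantes non toutes nulles. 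Je choisirais alors des indices $k_1, \ldots, k_u$ réalisant une $\K$-base de $V_n$ parmi les $L_{n,k}(\bm{\zeta})$, puis des indices $j_1, \ldots, j_u$ tels que la matrice $A := (\mu_{n,j_\ell, k_i})_{1\leq\ell,i\leq u} \in M_u(\K)$ soit inversible : cela est possible car la matrice rectangulaire $(\mu_{n,j,k_i})_{1\leq j\leq r,\,1\leq i\leq u}$ a pour colonnes les coordonnées des $L_{n,k_i}(\bm{\zeta})$, $\K$-linéairement indépendants par construction, donc est de rang $u$. Les formules de Cramer appliquées au système $A\bm{c} = (\nu_{j_\ell})_\ell$ fournissent alors des $c_i = \det A_i/\det A \in \K$ vérifiant $\varphi(\bm{\zeta}) = \sum_{i=1}^u c_i\, L_{n,k_i}(\bm{\zeta})$, où $A_i$ est obtenue depuis $A$ en remplaçant sa $i$-ième colonne par $(\nu_{j_\ell})_\ell$.

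Les estimations décisives se dérouleront comme suit. L'inégalité de Hadamard donne $|\det A_i| \leq C'\, Q_n^{(u-1)(1+o(1))}$, et une minoration de type Liouville fournit une borne inférieure sur $|\det A|$ : comme $D^u\det A$ est un entier algébrique non nul de maison $\leq C''\, Q_n^{u(1+o(1))}$, l'inégalité $|N_{\K/\Q}(D^u\det A)| \geq 1$ couplée à l'observation que le plongement distingué $\K \hookrightarrow \K_\infty$ contribue $d_\infty := [\K_\infty:\R]$ facteurs tous égaux à $|\det A|$ au produit des conjugués (les $d - d_\infty$ autres étant majorés par la maison, avec $d := [\K:\Q]$) conduit à
\[ |\det A| \geq C'''\, Q_n^{-u(d-d_\infty)/d_\infty \cdot (1+o(1))}. \]
Combinant ces deux bornes avec $(i)$, j'obtiendrais
\[ 0 < |\varphi(\bm{\zeta})| \leq u\cdot \max_i|c_i|\cdot Q_n^{-\tau+o(1)} \leq C''''\, Q_n^{ud/d_\infty - 1 - \tau + o(1)}, \]
d'où l'exposant doit être asymptotiquement positif puisque $Q_n \to +\infty$ et $\varphi(\bm{\zeta})$ est une constante non nulle. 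On en déduirait $u \geq d_\infty(\tau+1)/d = \delta(\tau+1)$, puis $r \geq u \geq \delta(\tau+1)$, contredisant l'hypothèse initiale.

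Le point technique le plus délicat me paraît être la formulation et l'application correcte de la minoration de Liouville adaptée à la distinction $\K_\infty \in \{\R, \C\}$. Dans le cas complexe, le plongement $\iota : \K \hookrightarrow \C$ et son conjugué complexe fournissent deux plongements distincts mais produisant le même module, d'où un facteur $|\det A|^2$ (et non $|\det A|$) dans l'estimation $\prod_\sigma |\sigma(D^u\det A)| \geq 1$. C'est précisément cet effet de doublement qui engendre le facteur $[\K_\infty : \R]$ dans la conclusion, et \emph{in fine} le gain d'un facteur $2$ entre les constantes $0.21$ et $0.42$ évoqué à la \sref{remarque}{remarquesurlaconstante}.
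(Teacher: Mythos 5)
Votre démonstration est correcte, mais elle suit une route véritablement différente de celle de l'article. L'article se contente de \emph{réduire} l'énoncé au cas particulier $\varphi(\bm{x}) = x_0$, qui est exactement \cite[Proposition 1, p.6]{fischler2021} : il pose $\tilde{\zeta}_0 = \frac{-1}{u_0}\varphi(\bm{\zeta})$, $\tilde{\lambda}_{n,0,k} = -u_0\lambda_{n,0,k}$, $\tilde{\lambda}_{n,i,k} = u_0\lambda_{n,i,k} - u_i\lambda_{n,0,k}$, vérifie que les trois hypothèses se transportent à ces nouvelles données, puis invoque le résultat cité comme une boîte noire. Vous, au contraire, redémontrez intégralement le critère (et donc, au passage, la Proposition 1 de Fischler sous sa forme généralisée) par la méthode du déterminant à la Siegel : traduction duale de l'hypothèse $(iii)$ en $\varphi\in\mathrm{Vect}_{\mathbb{K}}(L_{n,k})$, extraction d'une base $L_{n,k_1}(\bm{\zeta}), \ldots, L_{n,k_u}(\bm{\zeta})$ de $V_n$, résolution par Cramer, majoration des numérateurs par Hadamard et minoration du dénominateur par une inégalité de Liouville. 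Les points délicats sont bien traités : les coordonnées $a_{ij}$ et $\nu_j$ vivent dans $\mathbb{K}$ et sont indépendantes de $n$, et le comptage des plongements (l'inclusion $\iota$ et sa conjuguée $\bar{\iota}$ donnent le même module lorsque $\mathbb{K}_\infty = \C$) est précisément la source du facteur $[\mathbb{K}_\infty:\R]$, ce que l'approche de l'article laisse caché dans la référence. Deux détails seraient à expliciter dans une rédaction complète : d'une part $u = u_n$ dépend de $n$, mais votre cadre par l'absurde le gère puisque $u_n \leq r < \delta(\tau+1)$ rend l'exposant $u_n d/d_\infty - 1 - \tau$ uniformément majoré par une constante strictement négative, d'où une contradiction avec $\varphi(\bm{\zeta})\neq 0$ fixé ; d'autre part l'uniformité en $k$ des $o(1)$ des hypothèses $(i)$ et $(ii)$ est utilisée implicitement. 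En résumé, l'approche de l'article est plus courte et délègue toute la partie diophantienne au résultat cité, tandis que la vôtre est autonome et rend transparente l'origine du facteur $[\mathbb{K}_\infty:\R]/[\mathbb{K}:\Q]$ discuté dans la \sref{remarque}{remarquesurlaconstante}.
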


\begin{proof}
    La proposition 1 de \cite{fischler2021} que nous généralisons ici correspond à cet exact énoncé dans le cas où $\varphi : \bm{x} \mapsto x_0$ est la projection sur la première coordonnée. Pour démontrer notre critère, nous considérons une forme linéaire $\varphi$ quelconque à coefficients dans $\K$ et avec $\varphi(\bm{\zeta}) \neq 0$, puis nous construisons à partir des coefficients $\lambda_{n, i, k}$ d'autres coefficients $\widetilde{\lambda}_{n, i, k} \in \mathcal{O}_{\mathbb{K}}$ satisfaisant aux hypothèses de \cite[Proposition 1, p.6]{fischler2021}.

    Soient $u_0, ..., u_s\in\mathbb{K}$ tels que $\varphi : (x_0, ..., x_s) \mapsto \sum_{i=0}^s u_ix_i$. La forme linéaire $\varphi$ est non nulle et ses multiples vérifient aussi les hypothèses de l'énoncé. Quitte à permuter les coordonnées et à multiplier par un dénominateur commun des $u_i$, on peut supposer $u_0 \neq 0$ et $u_0, ..., u_s \in \mathcal{O}_\mathbb{K}$.

    On considère alors
    \begin{equation*}
        \tilde{\bm{\zeta}} := \raisebox{0.3cm}{${}^{t}$}\left(\frac{-1}{u_0}\varphi(\bm{\zeta}), \zeta_1, ..., \zeta_s\right).
    \end{equation*}
    et pour $n\in\mathcal{N}, k\in\llbracket 0, K_n\rrbracket$ les coefficients
    \begin{equation*}
        \tilde{\lambda}_{n, 0, k} := -u_0\lambda_{n, 0, k},\;\;\;\;\;\;\;\;\tilde{\lambda}_{n, i, k} := u_0\lambda_{n, i, k} - u_i \lambda_{n, 0, k}, \;\;\;0\leq i\leq s.
    \end{equation*}
    Ces coefficients sont dans $\mathcal{O}_\mathbb{K}$ et vérifient :
    \begin{enumerate}
        \item[$(i)$] $\forall k\in\llbracket 0, K_n\rrbracket, \;\;\; \Big| \sum_{i=0}^s \tilde{\lambda}_{n, i, k}\tilde{\zeta}_i\Big| = \Big| u_0\sum_{i=0}^s \lambda_{n, i, k}\zeta_i\Big| \leq Q_n^{-\tau+o(1)}$,
        \item[$(ii)$] $\forall i\in\llbracket 0, s\rrbracket,\;\;\; \forall k\in\llbracket 0, K_n\rrbracket, \;\;\; \house{\tilde{\lambda}_{n, i, k}} \leq 2\Big(\displaystyle{\max_{0\leq i\leq s}\house{u_i}}\Big) \max\left(\house{\lambda_{n, 0, k}}, \house{\lambda_{n, i, k}}\right) \leq Q_n^{1+o(1)}$,
        \item[$(iii)$] Pour $\bm{x} = \prescript{t}{}{(}x_0, ..., x_s) \in \mathbb{K}^{s+1}$, on a 
        \begin{equation*}
            \frac{1}{u_0}\sum_{i=0}^s \tilde{\lambda}_{n, i, k}x_i = \frac{-1}{u_0}\lambda_{n, 0, k}\varphi(\bm{x}) + \sum_{i=1}^s \lambda_{n, i, k}x_i.
        \end{equation*}
        Ainsi, si on a $\sum_{i=0}^s \tilde{\lambda}_{n, i, k}x_i = 0$ pour tout $k\in\llbracket 0, K_n\rrbracket$, on a par hypothèse sur les $\lambda_{n, i, k}$ :
        \begin{equation*}
            \varphi\left(\frac{-1}{u_0}\varphi(\bm{x}), x_1, ..., x_s\right) = 0
        \end{equation*}
        c'est-à-dire $u_0x_0 = 0$, d'où $x_0 = 0$.
    \end{enumerate}
    Nous pouvons donc appliquer \cite[Proposition 1, p.6]{fischler2021} à $\tilde{\bm{\zeta}}$ et aux coefficients $\tilde{\lambda}_{n, i, k}$ pour obtenir :
    \begin{equation*}
        \mathrm{dim}_\mathbb{K} \mathrm{Vect}_\mathbb{K} \left(\frac{-1}{u_0}\varphi(\bm{\zeta}), \zeta_1, ..., \zeta_s\right) > \frac{[\mathbb{K}_\infty : \R]}{[\mathbb{K}:\mathbb{Q}]}(\tau+1).
    \end{equation*}
    Cela conclut, puisque $\mathrm{Vect}_\mathbb{K} (\zeta_0, \zeta_1, ..., \zeta_s) = \mathrm{Vect}_\mathbb{K} \left(\frac{-1}{u_0}\varphi(\bm{\zeta}), \zeta_1, ..., \zeta_s\right).$
\end{proof}

\subsection{Application du critère}
\label{soussection7.2}
Nous ne considérons plus que des $a\in\N^*$ multiples de $100$ et suffisamment grands pour que $3.6\sqrt{a\log(a)/N} < \frac{a}{N}$ . À la suite d'une recherche numérique cherchant à maximiser la constante obtenue dans le résultat final, nous prenons $r=3.9$, $\kappa = 10.58$, $\omega = 12$, $\Omega = \lfloor 3.9\sqrt{a\log(a)/N}\rfloor$ et $h=0.36a$.

Nous considérons l'ensemble infini $\mathcal{N} \subset \N$ défini au début de la \autoref{partie4}. Pour chaque entier $n\in\mathcal{N}$, nous considérons les entiers $c_{n, i, j}$ obtenus par la \sref{proposition}{existencedescij}, puis les quantités $\Lambda_{n, (p, k)}$, $(p, k)\in\llbracket 0, h\rrbracket \times \llbracket 2rn+2, \kappa n\rrbracket$ qui en découlent via la construction de la \autoref{partie4}. 

On rappelle que d'après \eqref{Lambdacommecldeszeta}, les quantités $\Lambda_{n, (p, k)}$ sont des combinaisons linéaires des nombres $\chi(0), ..., \chi(N-1)$ et $L(\chi, i, -1)$, $\;1\leq i\leq b+h, \;i\equiv\varepsilon [2]$, où l'entier $b\leq a$ est défini en dessous de \eqref{définitiondebn}. Nous allons leur appliquer le critère d'indépendance linéaire donné par la \sref{proposition}{critèred'indépendancelinéaire} pour conclure.

\medskip

D'après la \sref{proposition}{majorationdescoefficients}, ces combinaisons sont à coefficients dans $\Z$. En particulier, leurs coefficients sont dans $\mathcal{O}_{\K}$ et leur maison est égale à leur valeur absolue.

De plus, d'après les propositions\sref{\hspace{1mm}}{majorationdescoefficients} et\sref{\hspace{1mm}}{majorationdescombinaisons}, les hypothèses $(i)$ et $(ii)$ du critère d'indépendance linéaire sont vérifiées pour ces combinaisons, avec $Q_n =\beta^n$ et $\tau = -\frac{\log(\alpha)}{\log(\beta)}$, de sorte que $Q_n^{-\tau} = \alpha^n$, où
\begin{equation*}
    \alpha = \frac{1}{r^\Omega} \Big(e^4 (2a+1)\Big)^\kappa \xi, \hspace{2cm}\beta = \Big(32e^3(2a+1)\Big)^\kappa \xi,
\end{equation*}
et
\begin{equation*}
    \xi := \exp\Bigg(\frac{\omega\log(2)+2\omega^2 + \omega^2\log(a+1)+\frac{1}{2}\Omega^2\log(r)}{\frac{a}{N}-\omega}\Bigg).
\end{equation*}

Enfin, puisque $(h+1)(\kappa-2r)+\omega > h(\kappa-2r) = 1,0008a > a$, la \sref{proposition}{propositiondéduitedeshidlovskii} affirme que l'hypothèse $(iii)$ du critère d'indépendance linéaire est satisfaite par toutes les formes linéaires
\begin{equation*}
    \varphi_\ell (x_0^{<0>}, ..., x_0^{<N-1>}, x_1, ..., x_{b+h}) := \begin{cases}
        \sum_{m=0}^{N-1}\sin\left(\frac{2\ell m\pi}{N}\right)x_0^{<m>} &\text{si $\varepsilon = 0$}, \\
        \sum_{m=0}^{N-1}\cos\left(\frac{2\ell m\pi}{N}\right)x_0^{<m>} &\text{si $\varepsilon = 1$},
    \end{cases}
    \hspace{1cm} 0\leq \ell \leq N-1.
\end{equation*}
Il ne reste plus qu'à trouver $\ell\in\llbracket 0, N-1\rrbracket$ tel que $\varphi_\ell$ soit non nulle au point $\bm{\zeta} = \big(\zeta_0^{<0>}, ..., \zeta_0^{<N-1>},$ $\zeta_1, ..., \zeta_{b+h} \big)$, où $\zeta_0^{<m>} = \chi(m)$ (voir \eqref{définitiondeszetai}). Le lemme suivant indique que tout $\ell$ premier à $N$ convient :
\begin{lemma}
\label{varphinannulepasleszeta}
    Soit $\chi$ un caractère de Dirichlet primitif modulo $N$. Alors pour tout ${\ell \in \llbracket 0, N-1 \rrbracket}$ premier avec $N$, on a
    \begin{equation*}
        \begin{cases}
            \sum_{m=0}^{N-1}\sin\left(\frac{2\ell m\pi}{N}\right)\chi(m) \neq 0 &\text{si $\chi$ est impair}, \\
        \sum_{m=0}^{N-1}\cos\left(\frac{2\ell m\pi}{N}\right)\chi(m) \neq 0 &\text{si $\chi$ est pair}.
        \end{cases}
    \end{equation*}
\end{lemma}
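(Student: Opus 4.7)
The plan is to recognise the two real trigonometric sums appearing in the lemma as the real and imaginary parts of the Gauss sum
\begin{equation*}
    \tau(\chi, \ell) := \sum_{m=0}^{N-1} \chi(m) \mu^{\ell m},
\end{equation*}
and to deduce the non-vanishing from the classical fact that the Gauss sum of a primitive Dirichlet character does not vanish at arguments coprime to its modulus.

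First, I would establish the transformation formula
\begin{equation*}
    \tau(\chi, \ell) = \overline{\chi(\ell)}\, \tau(\chi, 1), \qquad \gcd(\ell, N) = 1.
\end{equation*}
The change of variable $m' := \ell m \bmod N$ is a bijection of $\Z/N\Z$, and the multiplicativity of $\chi$ gives $\chi(m) = \chi(\ell^{-1})\chi(m') = \overline{\chi(\ell)}\chi(m')$ on the support of $\chi$; substituting into the sum yields the formula. Since $\chi$ is primitive modulo $N$, the classical identity $|\tau(\chi, 1)|^2 = N$ (see for instance \cite[Chapter VII, §2]{neukirch97}) gives $\tau(\chi, 1) \neq 0$, so $\tau(\chi, \ell) \neq 0$ for every $\ell$ coprime to $N$.

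Second, I would compare $\tau(\chi, \ell)$ and $\tau(\chi, -\ell)$. Setting
\begin{equation*}
    A_\ell := \sum_{m=0}^{N-1} \chi(m)\cos\!\left(\frac{2\ell m \pi}{N}\right), \qquad B_\ell := \sum_{m=0}^{N-1} \chi(m)\sin\!\left(\frac{2\ell m \pi}{N}\right),
\end{equation*}
(which are a priori complex numbers), one reads off from the definition of the Gauss sum the two identities $\tau(\chi, \ell) = A_\ell + i B_\ell$ and $\tau(\chi, -\ell) = A_\ell - i B_\ell$. On the other hand, applying the first step to $-\ell$ gives $\tau(\chi, -\ell) = \overline{\chi(-\ell)}\tau(\chi,1) = \chi(-1)\,\overline{\chi(\ell)}\tau(\chi,1) = \chi(-1)\,\tau(\chi, \ell)$, where I used that $\chi(-1) \in \{\pm 1\}$.

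Finally, I would separate cases according to the parity of $\chi$. If $\chi$ is even, $\chi(-1)=1$ forces $A_\ell - i B_\ell = A_\ell + i B_\ell$, hence $B_\ell = 0$ and $A_\ell = \tau(\chi, \ell) \neq 0$. If $\chi$ is odd, $\chi(-1)=-1$ forces $A_\ell - i B_\ell = -A_\ell - i B_\ell$, hence $A_\ell = 0$ and $i B_\ell = \tau(\chi, \ell) \neq 0$, so $B_\ell \neq 0$. Either case gives exactly the lemma. There is no genuine obstacle: the only nontrivial input is the classical evaluation $|\tau(\chi, 1)| = \sqrt{N}$ for a primitive character, which will simply be cited.
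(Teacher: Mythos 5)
Your proposal is correct and follows essentially the same route as the paper: both reduce the two trigonometric sums to $\frac{1}{2}\big(\tau(\chi,\ell)\pm\tau(\chi,-\ell)\big)$ (up to a factor of $i$), use the relation $\tau(\chi,\ell)=\overline{\chi}(\ell)\tau(\chi,1)$ together with the parity of $\chi$ to identify each sum with $\pm\tau(\chi,\ell)$ or $\mp i\,\tau(\chi,\ell)$, and conclude from $|\tau(\chi,1)|=\sqrt{N}$ for a primitive character. The only cosmetic difference is that you prove the transformation formula by the change of variable $m\mapsto \ell m$ where the paper simply cites it.
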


\begin{proof}
    On considère les sommes de Gauss
    \begin{equation*}
        \tau(\chi, \ell) = \sum_{m=0}^{N-1} \chi(m) e^{\frac{2i\ell m\pi}{N}}, \hspace{1cm} 0\leq\ell\leq N-1.
    \end{equation*}
    D'après \cite[Proposition 2.6, p.438]{neukirch97}, on a
    \begin{equation*}
        |\tau(\chi, 1)| = \sqrt{N} \hspace{1cm}\text{et}\hspace{1cm} \tau(\chi, \ell) = \bar{\chi}(\ell)\tau(\chi, 1), \hspace{1cm} 0\leq\ell\leq N-1.
    \end{equation*}
    Si $\ell$ est premier avec $N$, alors $\bar{\chi}(\ell)$ est de module $1$, d'où $\tau(\chi, \ell)$ est de module $\sqrt{N}$. 
    
    Soit donc $\ell$ premier avec $N$. On a vu que $\tau(\chi, \ell)$ est alors non nul. Or :
    
    \begin{enumerate}
        \item[\textrightarrow] Dans le cas où $\chi$ est impair, on calcule

        \begin{align*}
            \sum_{m=0}^{N-1} \chi(m)\sin \left(\frac{2\ell m\pi}{N}\right) &= \frac{\tau(\chi, \ell) - \tau(\chi, -\ell)}{2i} \\
            &= \frac{\bar{\chi}(\ell)\tau(\chi, 1) - \bar{\chi}(-\ell)\tau(\chi, 1)}{2i} \\
            &= \frac{\bar{\chi}(\ell)\tau(\chi, 1)}{i} \hspace{3.5cm}\text{(car $\bar{\chi}$ est impair.)} \\
            &= -i\tau(\chi, \ell) \neq 0.
        \end{align*}
        
        \item[\textrightarrow] Dans le cas où $\chi$ est pair, on calcule

        \begin{align*}
            \sum_{m=0}^{N-1} \chi(m)\cos \left(\frac{2\ell m\pi}{N}\right) &= \frac{\tau(\chi, \ell) + \tau(\chi, -\ell)}{2} \\
            &= \frac{\bar{\chi}(\ell)\tau(\chi, 1) + \bar{\chi}(-\ell)\tau(\chi, 1)}{2} \\
            &= \bar{\chi}(\ell)\tau(\chi, 1) \hspace{3.5cm}\text{(car $\bar{\chi}$ est pair.)} \\
            &= \tau(\chi, \ell) \neq 0.
        \end{align*}
    \end{enumerate}
\end{proof}

On peut donc appliquer la \sref{proposition}{critèred'indépendancelinéaire} avec $\K = \Q(\mu)$ et la forme linéaire $\varphi_{\ell}$ avec un $\ell \in \llbracket 0, N-1\rrbracket$ premier à $N$ quelconque. On a $a \geq b$, $[\K:\Q] = N$ et, en supposant $N \geq 3$, on a $\K_\infty = \C$ (dans le cas $N=1$, toute la suite est valable à un facteur $1/2$ près, car alors $\K_\infty = \R$). On obtient donc 
\begin{equation*}
    \mathrm{dim}_{\K}\mathrm{Vect}_{\K} \Big\{\chi(0), ..., \chi(N-1), L(\chi, i, -1) \;|\;1\leq i\leq a+h, \;i\equiv\varepsilon [2]\Big\} \stackunder{$\geq$}{$\scriptscriptstyle{a\to+\infty}$} \hspace{5mm} \frac{2}{N}\left( 1 - \frac{\log(\alpha)}{\log(\beta)}\right).
\end{equation*}
Calculons l'asymptotique de cette quantité lorsque $a \xrightarrow[]{} +\infty$ :

\begin{equation*}
    \log(\xi) \stackunder{$=$}{$\scriptscriptstyle{a\to+\infty}$} \frac{\frac{3.9^2}{2N}a\log(a)\log(3.9)}{\frac{a}{N}}\big(1+o(1)\big) \stackunder{$\leq$}{$\scriptscriptstyle{a\to+\infty}$} 10.36\log(a)\big(1+o(1)\big)
\end{equation*}
puis
\begin{equation*}
    \log(\beta) \stackunder{$=$}{$\scriptscriptstyle{a\to+\infty}$} \big(\kappa\log(a) + \log(\xi)\big)\big(1+o(1)\big) \stackunder{$\leq$}{$\scriptscriptstyle{a\to+\infty}$} 20.94\log(a)\big(1+o(1)\big), 
\end{equation*}
et
\begin{align*}
    -\log(\alpha) &\stackunder{$=$}{$\scriptscriptstyle{a\to+\infty}$} \left(\frac{3.9}{\sqrt{N}}\sqrt{a\log(a)}+O(1)\right)\log(r) + \kappa \log(a)\big(1+o(1)\big) + \log(\xi)\\
    &\stackunder{$=$}{$\scriptscriptstyle{a\to+\infty}$} \frac{3.9}{\sqrt{N}}\sqrt{a\log(a)}\log(3.9)\big(1+o(1)\big) \\
    &\stackunder{$\geq$}{$\scriptscriptstyle{a\to+\infty}$} \frac{5.3}{\sqrt{N}}\sqrt{a\log(a)}\big(1+o(1)\big),
\end{align*}

d'où,
\begin{equation*}
    1 - \frac{\log(\alpha)}{\log(\beta)} \stackunder{$\geq$}{$\scriptscriptstyle{a\to+\infty}$} \frac{0.25\big(1+o(1)\big)}{\sqrt{N}}\sqrt{\frac{a}{\log(a)}}.
\end{equation*}

\medskip
Les nombres complexes $\chi(m), 1\leq m\leq N,\;$ et $L(\chi, 1, -1)$ sont en nombre fini : leur contribution à cette asymptotique est donc négligeable lorsque $a \to +\infty$. 

En notant de plus que $$\sqrt{\frac{a}{\log(a)}} \stackunder{$\geq$}{$\scriptscriptstyle{a\to+\infty}$} \frac{1}{\sqrt{1.36}}\big(1+o(1)\big)\sqrt{\frac{a+h}{\log(a+h)}},$$ on peut écrire
\begin{equation*}
    \mathrm{dim}_{\K}\mathrm{Vect}_{\K} \Big\{L(\chi, i, -1) \;|\;2\leq i\leq a+h, \;i\equiv\varepsilon [2]\Big\} \stackunder{$\geq$}{$\scriptscriptstyle{a\to+\infty}$} \frac{0.428\big(1+o(1)\big)}{N^{3/2}}\sqrt{\frac{a+h}{\log(a+h)}}.
\end{equation*}

\medskip
Maintenant, remarquons que pour tout $i\geq 2$
\begin{align}
\label{symétrieen1et-1}
    L(\chi, i, -1) &= \sum_{m=1}^{+\infty} \frac{(-1)^m\chi(m)}{m^i} \nonumber \\
                    &= 2\sum_{m=1}^{+\infty} \frac{\chi(2m)}{(2m)^i} -\sum_{m=1}^{+\infty} \frac{\chi(m)}{m^i} \\
                    &= \Big(2^{1-i}\chi(2)-1\Big)L(\chi, i, 1). \nonumber
\end{align}

Puisque $\chi$ est à valeurs dans $\mathbb{U}_N$ et $i\geq 2$, on a $\Big(2^{1-i}\chi(2)-1\Big) \in \K\setminus \{0\}$, si bien que 
\begin{align*}
    \mathrm{Vect}_{\K} \Big\{L(\chi, i, -1) \;|\;2\leq i\leq a+h, \;i\equiv\varepsilon [2]\Big\}  
    = \mathrm{Vect}_{\K} \Big\{L(\chi, i, 1) \;|\; 2\leq i\leq a+h, \;i\equiv\varepsilon [2]\Big\}.
\end{align*}

En posant $s = a+h$, on obtient le résultat du \autoref{théorèmeprincipal}, puisque :
\begin{equation*}
    \mathrm{dim}_{\K}\mathrm{Vect}_{\K} \Big\{L(\chi, i, 1) \;|\; 2\leq i\leq s, \;i\equiv\varepsilon [2]\Big\} \stackunder{$\geq$}{$\scriptscriptstyle{s\to+\infty}$} \frac{0.428\big(1+o(1)\big)}{N^{3/2}}\sqrt{\frac{s}{\log(s)}}.
\end{equation*}

\bigskip

\bibliographystyle{alpha}
\bibliography{bibliography}

\vspace{1cm}
Ludovic Mistiaen, Université Grenoble Alpes, CNRS, Institut Fourier, CS 40700, 38058
Grenoble cedex 9, France. Ludovic.mistiaen@univ-grenoble-alpes.fr.

\medskip 
\noindent Classification MSC : 11J72 (Principale), 11M06, 34M03 (Secondaires). 

\medskip 
\noindent Mots-clés : Indépendance linéaire sur un corps, Fonction $L$ de Dirichlet, lemme de Shidlovskii, Approximation de type Padé.

\end{document}